\newtheorem{theorem}[subsection]{Theorem}
\newtheorem{proposition}[subsection]{Proposition}
\newtheorem{lemma}[subsection]{Lemma}
\newtheorem{conjecture}[subsection]{Conjecture}
\theoremstyle{definition}
\newtheorem{definition}[subsection]{Definition}
\newtheorem{remark}[subsection]{Remark}
\newtheorem{example}[subsection]{Example}
\numberwithin{equation}{subsection}
\newcommand{\ol}{\overline}
\newcommand{\ul}{\underline}
\newcommand{\wt}{\widetilde}
\newcommand{\spec}{\mathrm{Spec}}
\newcommand{\sep}{\mathrm{sep}}
\newcommand{\alg}{\mathrm{alg}}
\newcommand{\gal}{\mathrm{Gal}}
\newcommand{\ch}{\mathrm{char}}
\newcommand{\cl}{\mathrm{cl}}
\newcommand{\fm}{\mathfrak m}
\newcommand{\FE}{\text{F\'E}}
\newcommand{\Hom}{\mathrm{Hom}}
\newcommand{\pr}{\mathrm{pr}}
\newcommand{\gp}{\mathrm{gp}}
\newcommand{\Ext}{\mathrm{Ext}}
\newcommand{\F}{\mathrm{F}}
\newcommand{\wh}{\widehat}
\newcommand{\iso}{\xrightarrow{\sim}}
\newcommand{\sO}{\mathscr{O}}
\newcommand{\bv}{\mathbf v}
\newcommand{\cF}{\mathcal F}
\newcommand{\bZ}{\mathbb Z}
\newcommand{\sF}{\mathscr F}
\newcommand{\bA}{\mathbb A}
\newcommand{\bQ}{\mathbb Q}
\newcommand{\bT}{\mathbb T}
\newcommand{\bG}{\mathbb G}
\newcommand{\rL}{\mathrm L}
\newcommand{\bF}{\mathbb F}
\newcommand{\sG}{\mathscr G}
\newcommand{\rh}{\mathrm{h}}
\begin{document}

\title{Purely inseparable extensions and ramification filtrations}

\author{Haoyu Hu}

\address{Max-Planck-Institut f\"ur Mathematik, Vivatsgasse 7, Bonn 53111, Germany}

\email{huhaoyu1987@163.com, huhaoyu@mpim-bonn.mpg.de}

\subjclass[2000]{Primary 11S15; Secondary 14F20}

\keywords{Abbes and Saito's ramification filtration, purely inseparable extension, characteristic form}

\begin{abstract}
In this article, we investigate the shift of Abbes and Saito's ramification filtrations of the absolute Galois group of a complete discrete valuation field of positive characteristic under a purely inseparable extension. We also study a functoriality property for characteristic forms.
\end{abstract}

\maketitle
\tableofcontents

\section{Introduction}
\subsection{}\label{K}
In this article, $K$ denotes a discrete valuation field, $\sO_K$ the integer ring of $K$, $F$ the residue field of $K$, $K^{\alg}$ an algebraic closure of $K$, $K^{\sep}$ the separable closure of $K$ in $K^{\alg}$, $\bv_K:K^{\alg}\rightarrow \mathbb Q$ a valuation normalized by $\bv_K(K)=\mathbb Z$ and $G_K$ the Galois group of $K^{\sep}$ over $K$. We assume that the characteristic of the residue field $F$ is $p>0$. 

\subsection{}
Assume that $\sO_K$ is henselian or complete.  When $F$ is perfect,  a classical upper numbering ramification filtration $\{G^r_{K,\cl}\}_{r\in \mathbb Q_{\geq 0}}$ on $G_K$ has been known for a long time \cite{serre cl}. This filtration gives a delicate description of the wild inertia subgroup of $G_K$ and has many applications in arithmetic geometry. For instance, this filtration contributes a local invariant called the {\it Swan conductor} to the Grothendieck-Ogg-Shafarevich formula that computes the Euler-Poincar\'e characteristic of an $\ell$-adic sheaf on a smooth and projective curve over an algebraically closed field of charateristic $p>0$ ($p\neq \ell$) \cite{sga5}. In higher dimensional situations, the ramification phenomena involving purely inseparable residue extension is the main difficulty to generalize the upper numbering ramification filtration. In \cite{as i, as ii}, Abbes and Saito overcame this difficulty using rigid geometry and defined two decreasing filtrations $\{G_K^r\}_{r\in \bQ_{\geq 1}}$ and $\{G_{K,\log}^r\}_{r\in \bQ_{\geq 0}}$ on $G_K$ called the ramification filtration and the logarithmic ramification filtration, respectively.

\subsection{}\label{as12}
In \cite{as i}, many properties for the two ramification filtrations are provided. For any $r\in\bQ_{\geq 0}$, we have $G^{r+1}_{K,\log}\subseteq G^{r+1}_{K}\subseteq G^{r}_{K,\log}$. If $F$ is perfect, for any $r\in\bQ_{\geq 0}$, we have $G^{r+1}_K=G^{r}_{K,\log}=G^{r}_{K,\mathrm{cl}}$.   Let $K'$ be a finite separable extension of $K$ contained in $K^{\sep}$ of ramification index $e$. We identify $G_{K'}=\gal(K^{\sep}/K')$ as a subgroup of $G_K$. Then we have 
$G_{K'}^{er}\subseteq G_K^r$ with an equality when $K'/K$ is unramified, and $G_{K',\log}^{er}\subseteq G_{K,\log}^{r}$ with an equality when $K'/K$ is tamely ramified. 

\subsection{}\label{purinsup}
We further assume that $K$ is equal characteristic. Let $K'$ be a finite and purely inseparable extension of $K$ contained in $K^{\alg}$. The homomorphism $\gamma: G_{K'}=\gal(K^{\alg}/K')\rightarrow G_K$ is an isomorphism and we identify $G_{K'}$ and $G_K$ by $\gamma$. If the residue field  $F$ is perfect, we see that $G^r_{K',\cl}=G^r_{K,\cl}$ for any $r\geq 0$ by considering the lower numbering filtration and the Herbrand function (cf. \cite[Lemma 2.3.1]{tey}).  It is nature to ask: is this equality valid for Abbes and Saito's two ramification filtrations?
If not, what is the shift of the two ramification filtrations
under a purely inseparable extension? In the first half of this article,  we answer these questions by the following theorem, which supplements a property for Abbes and Saito's ramification theory.

\begin{theorem}[{Theorem \ref{thleft} and \ref{thright}}]
\label{pureinsepramintro}
We denote by $e$ the ramification index of $K'/K$ and by $f$ the dual ramification index of $K'/K$ (see \ref{eri}). Then we have the following inclusions: 
\begin{itemize}
\item[(i)]
For any $r\in\bQ_{\geq 1}$, 
\begin{equation*}
G^{er}_{K'}\subseteq G^r_K\ \ \ \textrm{ and}\ \ \ G_K^{fr}\subseteq G^r_{K'}.
\end{equation*}
\item[(ii)]
For any $r\in\bQ_{\geq 0}$, 
\begin{equation*}
G^{er}_{K',\log}\subseteq G^r_{K,\log}\ \ \ \textrm{and}\ \ \ G_{K,\log}^{fr}\subseteq G^r_{K',\log}.
\end{equation*}
\end{itemize}
\end{theorem}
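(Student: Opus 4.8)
The plan is to compare Abbes and Saito's rigid tubular neighbourhoods attached to finite separable extensions of $K$ with those attached to finite separable extensions of $K'$, in the spirit of the separable functoriality recalled in \ref{as12}, while keeping careful track of the change of base ring $\sO_K\hookrightarrow\sO_{K'}$ and of the normalisation of orders. Two preliminary remarks frame the argument. First, since $K'/K$ is purely inseparable we have $K^{\sep}\cdot K'=K'^{\sep}$ inside $K^{\alg}$, so as $M$ runs over the finite separable subextensions of $K^{\sep}/K$ the composita $M':=MK'=M\otimes_K K'$ (finite separable over $K'$, with $\gal(M'/K')\iso\gal(M/K)$ compatibly with the isomorphism $\gamma$ of \ref{purinsup}) are cofinal among finite separable extensions of $K'$; by the standard functoriality of these $\pi_0$ under enlarging the extension (the transition maps being surjective), it suffices to test membership in $G^r_{K'}$ (resp. $G^r_{K',\log}$) against the $M'$ alone. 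Second, the fixed valuations satisfy $\bv_{K'}=e\cdot\bv_K$ on $K^{\alg}=K'^{\alg}$ and $\wh{K^{\alg}}=\wh{K'^{\alg}}$; hence if $\sO_M=\sO_K[x_1,\dots,x_n]/(f_1,\dots,f_m)$ is a presentation, the same polynomials present the order $\mathcal A_M:=\sO_M\otimes_{\sO_K}\sO_{K'}\subseteq\sO_{M'}$ over $\sO_{K'}$, and after extending scalars to $\wh{K^{\alg}}$ the tube $X^{er}_{\mathcal A_M/\sO_{K'}}$ has literally the same defining inequalities as the tube $X^{r}_{\sO_M/\sO_K}$. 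The analogous statements hold for the logarithmic tubes.

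For $G^{er}_{K'}\subseteq G^r_K$ and $G^{er}_{K',\log}\subseteq G^r_{K,\log}$ I would argue as in the separable case of \ref{as12}. Fix a finite separable $M/K$. Passing from the order $\mathcal A_M$ to its normalisation $\sO_{M'}$ gives, as in Abbes and Saito, a natural $\gamma$-equivariant surjection $\pi_0(X^{er}_{\sO_{M'}/\sO_{K'}}\otimes\wh{K^{\alg}})\twoheadrightarrow\pi_0(X^{er}_{\mathcal A_M/\sO_{K'}}\otimes\wh{K^{\alg}})$, and the target is canonically $\pi_0(X^{r}_{\sO_M/\sO_K}\otimes\wh{K^{\alg}})$ by the second remark. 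An element of $G^{er}_{K'}$ acts trivially on the source for every $M$, hence on every $\pi_0(X^r_{\sO_M/\sO_K}\otimes\wh{K^{\alg}})$, hence lies in $G^r_K$. The logarithmic statement is identical with logarithmic tubes.

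For $G^{fr}_K\subseteq G^r_{K'}$ and $G^{fr}_{K,\log}\subseteq G^r_{K',\log}$ — the main point — I would compare in the opposite direction, and here the purely inseparable hypothesis and the invariant $f$ of \ref{eri} are used in an essential way. Fix a finite separable $M/K$; by the first remark it suffices to show that $G^{fr}_K$ acts trivially on $\pi_0(X^r_{\sO_{M'}/\sO_{K'}}\otimes\wh{K^{\alg}})$. I would realise both $\pi_0(X^{fr}_{\sO_M/\sO_K}\otimes\wh{K^{\alg}})$ and $\pi_0(X^{r}_{\sO_{M'}/\sO_{K'}}\otimes\wh{K^{\alg}})$ as $G_K$-equivariant quotients of the common finite set of $\sO_K$-algebra embeddings $\sO_M\hookrightarrow\sO_{\wh{K^{\alg}}}$ (equivalently, $\sO_{K'}$-algebra embeddings $\sO_{M'}\hookrightarrow\sO_{\wh{K^{\alg}}}$), by the relations ``lying in a common connected component of the radius-$fr$ tube over $\sO_K$'', respectively ``of the radius-$r$ tube over $\sO_{K'}$''. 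The crucial claim is that the first relation refines the second. Since $K$ has equal characteristic $p$, a presentation of $\sO_{K'}$ over $\sO_K$ uses only $p$-power roots of elements of $\sO_K$, so a presentation of $\sO_{M'}$ over $\sO_{K'}$ is obtained from one of $\sO_M$ over $\sO_K$ by adjoining finitely many such roots with the evident relations; substituting this into the inequalities defining the tubes and using $\bv_{K'}=e\bv_K$ together with the defining property of $f$ in \ref{eri} — the exponent that bounds the normalisation discrepancy between $\sO_{M'}$ and $\mathcal A_M$, and is reflected in the torsion and cokernel of $\Omega^1_{\sO_M/\sO_K}\otimes_{\sO_M}\sO_{M'}\to\Omega^1_{\sO_{M'}/\sO_K}$ — one checks that two $\sO_K$-embeddings of $\sO_M$ lying in a common radius-$fr$ tube over $\sO_K$ induce $\sO_{K'}$-embeddings of $\sO_{M'}$ lying in a common radius-$r$ tube over $\sO_{K'}$. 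This yields a $G_K$-equivariant surjection $\pi_0(X^{fr}_{\sO_M/\sO_K}\otimes\wh{K^{\alg}})\twoheadrightarrow\pi_0(X^{r}_{\sO_{M'}/\sO_{K'}}\otimes\wh{K^{\alg}})$; since $fr\ge1$ (resp. $fr\ge0$) as $f\ge1$, an element of $G^{fr}_K$ acts trivially on the source, hence on the target, and ranging over all $M$ gives $G^{fr}_K\subseteq G^r_{K'}$. Once more the logarithmic case is the same with logarithmic tubes.

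The hard part will be this second family of inclusions, and within it the honest analysis of the non-normal order $\mathcal A_M=\sO_M\otimes_{\sO_K}\sO_{K'}$: since $K'/K$ is purely inseparable it can be fiercely ramified, $\mathcal A_M$ genuinely fails to be normal, and one must show that normalising it dilates the radius of the relevant tube by a factor of at most $f$ while remaining compatible with the $\gamma$-action on geometric connected components — equivalently, that the ``$fr$ over $\sO_K$'' equivalence on embeddings refines the ``$r$ over $\sO_{K'}$'' one, sharply. Choosing the definition of $f$ in \ref{eri} so that this estimate holds (and is optimal), and then establishing the comparison morphism of rigid spaces with the stated shift in both the non-logarithmic and the logarithmic settings — the ranges $r\ge1$, resp. $r\ge0$, being automatically preserved since $e,f\ge1$ — is the technical core. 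The inclusions $G^{s+1}_{\bullet,\log}\subseteq G^{s+1}_{\bullet}\subseteq G^{s}_{\bullet,\log}$ of \ref{as12} remain available as a consistency check, although I expect the logarithmic and non-logarithmic versions to be treated in parallel rather than deduced one from the other.
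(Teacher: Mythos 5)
Your first family of inclusions ($G^{er}_{K'}\subseteq G^r_K$ and its logarithmic analogue) is argued essentially as in the paper: the identity $\bv_{K'}=e\bv_K$ makes the radius-$er$ tube of $\sO_M\otimes_{\sO_K}\sO_{K'}$ over $\sO_{K'}$ coincide with the radius-$r$ tube of $\sO_M$ over $\sO_K$, and forgetting the extra generators of the normalization $\sO_{M'}$ gives the required surjection on geometric connected components; this is the content of Lemma \ref{diagaffinoid} and Proposition \ref{rightinclu} (the paper then sharpens $e$ to the strict ramification index via the decomposition $K\subseteq K^{p^{-t}}\subseteq K'$, but the stated theorem only needs $e$).

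The second family is where your proposal has a genuine gap. The ``crucial claim'' --- that the equivalence relation on embeddings induced by $\pi_0$ of the radius-$fr$ tube over $\sO_K$ refines the one induced by $\pi_0$ of the radius-$r$ tube over $\sO_{K'}$ --- is, after passing to the limit over $M$, literally equivalent to the inclusion $G^{fr}_K\subseteq G^r_{K'}$ you are trying to prove, and you offer no argument for it beyond ``one checks.'' Moreover the natural way to establish such a refinement would be a morphism of affinoids $X^{fr}_{Z}\to X^{r}_{Z'}$, which would require expressing the extra generators of $\sO_{M'}$ (elements of the normalization of $\sO_M\otimes_{\sO_K}\sO_{K'}$, involving $p$-power roots) as analytic functions of the coordinates on $X^{fr}_Z$ --- i.e.\ extracting $p$-power roots of functions on an affinoid, which is not possible in general; so the step as described would fail. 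Your gloss on $f$ via ``the torsion and cokernel of $\Omega^1_{\sO_M/\sO_K}\otimes_{\sO_M}\sO_{M'}\to\Omega^1_{\sO_{M'}/\sO_K}$'' is also not the paper's definition: $f$ is the ramification index of $K^{p^{-n}}/K'$, where $n$ is the exponent of $K'/K$. The paper sidesteps the reverse rigid-geometric comparison entirely. It first shows (Lemmas \ref{fr id} and \ref{frob fileq}) that since the composite $K\hookrightarrow K^{p^{-n}}\xrightarrow{x\mapsto x^{p^n}}K$ is the Frobenius, which induces the identity on $G_K$, the canonical isomorphism $G_{K^{p^{-n}}}\iso G_K$ carries $G^a_{K^{p^{-n}}}$ onto $G^a_K$ \emph{exactly}, for every $a$. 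Then the inclusion $K'\subseteq K^{p^{-n}}$ is handled by the easy direction already established (Proposition \ref{rightinclu}), with shift equal to the ramification index $f$ of $K^{p^{-n}}/K'$, giving $G^{fa}_K=\gamma(G^{fa}_{K^{p^{-n}}})\subseteq\gamma(G^a_{K'})$. That reduction of the hard direction to the easy one via the Frobenius is the key idea your proposal is missing.
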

When the residue field $F$ is not perfect, the shifts $e$ and $f$ in the above theorem is optimal in general  (Example \ref{Kn} and \ref{exsamec}). The proof of the theorem relies on a mimic of the shift property for separable extensions as in \ref{as12} and the fact that the Frobenius map induces the identity of absolute Galois groups. 

\subsection{}
In the rest of the introduction, we assume that $K$ is geometric, i.e., there exists a smooth variety $X$ over a perfect field $k$ of characteristic $p>0$, and a generic point  $\xi$ of an irreducible Cartier divisor of $X$ such that $\sO_K\iso\sO^{\mathrm{h}}_{X,\xi}$. For each $r\in \bQ_{\geq 1}$, we put $G^{r+}_K=\overline{\bigcup_{b>r} G^b_K}$. For any $r\in \bQ_{> 1}$, the graded piece $G_K^r/G_K^{r+}$ is abelian and $p$-torsion, and we have an injective homomorphism, called the characteristic form \cite[Proposition 2.28]{wr}
\begin{equation*}
\ch:\Hom_{\bF_p}(G^r_K/G^{r+}_K,\bF_p)\rightarrow \Omega^1_{\sO_K}\otimes_{\sO_K}\overline N_K^{-r}, 
\end{equation*}
where $\overline N^{-r}$ denotes the $1$-dimensional $\overline F$-vector space ($\overline F$ denotes the residue field of $\sO_{K^{\sep}}$)
\begin{equation*}
\overline N_K^{-r}=\{x\in K^{\sep}\,;\, \bv_K(x)\geq -r\}/\{x\in K^{\sep}\,;\, \bv_K(x)> -r\}.
\end{equation*}
Let $K'/K$ be a finite and purely inseparable extension, $e$ the ramification index of $K'/K$ and $f$ the dual ramification index of $K'/K$ (\ref{eri}). By Theorem \ref{pureinsepramintro}, we have canonical maps of graded pieces
\begin{equation}\label{gradpiece}
\gamma:G^{er}_{K'}/G^{er+}_{K'}\rightarrow G^{r}_{K}/G^{r+}_{K},\ \ \ \textrm{and}\ \ \ \gamma^{-1}:G^{fr}_{K}/G^{fr+}_{K}\rightarrow G^{r}_{K'}/G^{r+}_{K'}.
\end{equation}
In the second part of this article, we show a functoriality property for characteristic forms (Theorem \ref{themgr}). Apply it to the extension $K'/K$, we obtain the following proposition that describes images of \eqref{gradpiece}.

\begin{proposition}[{Proposition \ref{coropurins}}]\label{coropurinsintro}
Let $n$ be the integer satisfying $ef=p^n$.
Then, for any $r\in \bQ_{>1}$, we have the following commutative diagrams
\begin{equation*}
\xymatrix{\relax
\Hom_{\bF_p}(G^r_K/G^{r+}_K,\bF_p)\ar[r]^-(0.5){\ch}\ar[d]_{\gamma^{\vee}}&\Omega^1_{\sO_K}\otimes_{\sO_K}\overline N_K^{-r}\ar[d]^{\theta}\\
\Hom_{\bF_p}(G^{er}_{K'}/G^{er+}_{K'},\bF_p)\ar[r]^-(0.5){\ch}&\Omega^1_{\sO_{K'}}\otimes_{\sO_{K'}}\overline N_{K'}^{-er}}
\end{equation*}
\begin{equation*}
\xymatrix{\relax
\Hom_{\bF_p}(G^r_{K'}/G^{r+}_{K'},\bF_p)\ar[r]^-(0.5){\ch}\ar[d]_{(\gamma^{-1})^{\vee}}&\Omega^1_{\sO_{K'}}\otimes_{\sO_{K'}}\overline N_{K'}^{-r}\ar[d]^{\sigma}\\
\Hom_{\bF_p}(G^{fr}_{K}/G^{fr+}_{K},\bF_p)\ar[r]^-(0.5){\ch}&\Omega^1_{\sO_{K}}\otimes_{\sO_{K}}\overline N_{K}^{-fr}}
\end{equation*}
where $\theta$ (resp. $\sigma$) denotes the canonical map of differentials induced by the inclusion $K\subset K'$
(resp. $K'\rightarrow K$, $x'\mapsto x'^{p^n}$).
\end{proposition}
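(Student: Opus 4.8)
The plan is to deduce both commutative diagrams from the general functoriality statement for characteristic forms (Theorem \ref{themgr}), applied to two explicit morphisms of geometric complete discrete valuation fields: the inclusion $\iota\colon K\hookrightarrow K'$ and the $p^{n}$-th power map $\phi\colon K'\to K$, $x'\mapsto x'^{p^{n}}$. Recall that Theorem \ref{themgr} is, in shape, the assertion that for such a morphism --- once its hypotheses are met --- there is a commutative square relating the two characteristic forms through the canonical map of differentials and a twist of $\overline N^{-r}$ into $\overline N^{-mr}$, where $m$ is the factor by which the morphism scales valuations; its left-hand vertical arrow is the transpose of the map on graded pieces induced by the morphism on Galois groups, and the ramification bounds (\ref{as12} in the separable case, Theorem \ref{pureinsepramintro} in the purely inseparable case) are precisely what guarantees that this map lands in the asserted graded piece. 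So it will remain to check that $\iota$ and $\phi$ satisfy the hypotheses of Theorem \ref{themgr} and that the induced data are the ones written in the statement.

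For the first diagram I would apply Theorem \ref{themgr} to $\iota$. Since $K'/K$ is finite purely inseparable and $K$ is geometric, $K'$ is again geometric: writing $\sO_K\iso\sO^{\rh}_{X,\xi}$ with $X$ smooth over the perfect field $k$, one realizes $\sO_{K'}$ through a radicial cover of a suitable smooth model over $k$ (perfectness of $k$ being used here, so that the constant field need not be enlarged). The inclusion $\iota$ multiplies valuations by the ramification index $e$ (\ref{eri}), giving the twist $\overline N_K^{-r}\to\overline N_{K'}^{-er}$; it induces on Galois groups the identification $\gamma\colon G_{K'}\iso G_K$, which by Theorem \ref{pureinsepramintro}(i) carries $G_{K'}^{er}$ into $G_K^{r}$ and hence on graded pieces is precisely the map $\gamma$ of \eqref{gradpiece}; and the canonical differential map attached to $\iota$ is $\theta$. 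Theorem \ref{themgr} then yields the first square. For the second diagram I would apply Theorem \ref{themgr} to $\phi$ (or, if Theorem \ref{themgr} is stated for extensions, to the factorization of $\phi$ as the Frobenius isomorphism $K'\iso K'^{p^{n}}$ followed by the purely inseparable extension $K'^{p^{n}}\subseteq K$). Since $[K':K]=ef=p^{n}$ and the extension is purely inseparable, $x'^{p^{n}}\in K$ for every $x'\in K'$ and $x'^{p^{n}}\in\sO_K$ for $x'\in\sO_{K'}$, so $\phi$ is a well-defined injective local homomorphism of complete discrete valuation rings whose target $K$ is geometric; it multiplies valuations by $f=p^{n}/e$, giving the twist $\overline N_{K'}^{-r}\to\overline N_K^{-fr}$. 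The composite $K\xrightarrow{\iota}K'\xrightarrow{\phi}K$ is the $p^{n}$-th power Frobenius of $K$, which induces the identity of $G_K$; hence $\phi$ induces the inverse isomorphism $\gamma^{-1}\colon G_K\iso G_{K'}$, which by Theorem \ref{pureinsepramintro}(i) carries $G_K^{fr}$ into $G_{K'}^{r}$ and hence on graded pieces is the map $\gamma^{-1}$ of \eqref{gradpiece}. The canonical differential map attached to $\phi$ sends $dx'\mapsto d(x'^{p^{n}})$, i.e.\ is exactly $\sigma$ (nonzero along the inseparable directions $x'\notin\sO_K$), so Theorem \ref{themgr} yields the second square.

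The genuine content, and the main obstacle, lies in Theorem \ref{themgr}, not in this corollary. To prove it I would use that each graded piece is $\Hom_{\bF_p}(-,\bF_p)$ of a $p$-torsion group to reduce the verification to Artin--Schreier and Artin--Schreier--Witt characters, for which the characteristic form is read off from the differential of the defining element and functoriality in the base field is transparent; the substantive point is then the compatible behaviour of the Abbes--Saito dilatation spaces under the morphism, together with the ramification estimates of Theorem \ref{pureinsepramintro} (resp.\ \ref{as12}) that place the pulled-back character in the correct graded piece. Within the present corollary the only delicate points are the verification of the hypotheses of Theorem \ref{themgr} for the Frobenius-type morphism $\phi$, and the bookkeeping identifying the induced maps on graded pieces with $\gamma$, $\gamma^{-1}$ and their ramification shifts --- which rests on Theorem \ref{pureinsepramintro} and on the fact that Frobenius acts as the identity on absolute Galois groups.
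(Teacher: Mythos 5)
Your proposal follows the same route as the paper: both squares are obtained by applying Theorem \ref{themgr}, the first to the inclusion $K\subset K'$ (which is a geometric extension by hypothesis, so there is nothing to check there) and the second to the $p^{n}$-th power map $\phi\colon K'\to K$, with the identifications of the vertical arrows ($\gamma$, $\gamma^{-1}$, $\theta$, $\sigma$, and the valuation scalings by $e$ and $f$) handled exactly as you describe. However, the one substantive step of the paper's proof is the point you flag but do not carry out: verifying that $\phi$ is a \emph{geometric} extension in the sense of Definition \ref{geomreal}, i.e.\ that it is realized by a dominant $k$-morphism of smooth models. This is not automatic, because the natural candidate is the $n$-th power of the absolute Frobenius $\F_X^{n}\colon X\to X$, which is not a $k$-morphism. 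The paper resolves this by factoring $\F_X^{n}$ as $g\circ h$ with $g\colon X'\to X$ the radicial cover realizing $K'/K$ (possible after shrinking $X$, since $K'/K$ has exponent $n$), and then precomposing $h\colon X\to X'$ with the first projection from the Frobenius twist $X_n=X\times_{k,\F_k^{-n}}k$; the composite $h\circ\pr_1\colon X_n\to X'$ is a genuine $k$-morphism of smooth $k$-schemes realizing $\phi$, and this is where the perfectness of $k$ enters. Your parenthetical suggestion of factoring $\phi$ through $K'\iso K'^{p^{n}}\subseteq K$ points in the same direction, but without the twist by $\F_k^{-n}$ the Frobenius step does not satisfy the hypotheses of Theorem \ref{themgr}. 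So the approach is right and the bookkeeping is correct, but the proof is incomplete precisely at the step the paper's argument is devoted to.
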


There is an analogue of characteristic forms for graded pieces of the logarithmic ramification filtration, called the {\it refined Swan conductor} (cf. \cite[Corollary 1.25]{saito cc}). We expect a functorial property for the refined Swan conductor similar to Proposition \ref{coropurinsintro}.

\subsection{}
To measure the ramification of an $\ell$-adic sheaf on normal varieties of positive characteristic along a Cartier divisor, Deligne proposed a method by restricting the sheaf to smooth curves. Later, Abbes and Saito's ramification theory allows us to study the ramification of the sheaf at generic points of the divisor. In \cite{ek}, Esnault and Kerz predicted that 
the sharp ramification bound of the sheaf along the divisor by Deligne's approach coincides with that by Abbes and Saito's logarithmic ramification filtrations. In \cite{barr}, the prediction was rigorously formulated and proved for rank $1$ sheaves on smooth varieties. For any sheaves on smooth varieties, the prediction was proved in \cite{hu}.  By Temkin's result \cite{tem}, normal varieties has an inseparable local alteration that kills singularities. We hope that, by applying Temkin's local alteration and the result in this article, we may reduce Esnault and Kerz's prediction for normal varieties to the known smooth situation. 

\subsection{}
This article has two parts. The first part is \S2 --\S 4. After preliminaries on fields, we review Abbes and Saito's ramification filtrations in \S3 and we prove Theorem \ref{pureinsepramintro} in \S4. The second part is \S 5--\S7. We introduce the geometric notation in \S5.  In \S6, we  recall the geometric ramification and characteristic forms for \'etale covers in \cite{wr} and prove the functoriality of characteristic forms. At last, we apply a functoriality property to fields and give a proof of Proposition \ref{coropurinsintro}.

\section{Algebraic prelimilaries}
\subsection{}
In this article, $k$ denotes a field, $k^{\alg}$ an algebraical closure of $k$, $k^{\sep}$ the separable closure of $k$ contained in $k^{\alg}$ and $G_k$ the Galois group of $k^{\sep}$ over $k$.
We denote by $\FE_{/k}$ the category of finite and \'etale $k$-schemes and by $\cF_k$ the functor from $\FE_{/k}$ to the category of finite sets that maps $\spec(l)$ to $\Hom_k(l,k^{\alg})$. The functor $\cF_k$ makes $\FE_{/k}$ a Galois category of group $G_k$. For simplicity, we always put $\cF_k(l)=\cF_k(\spec(l))$ for an object $\spec(l)$ in $\FE_{/k}$.

\begin{lemma}\label{purins}
Assume that the characteristic of $k$ is $p>0$ and let 
 $k'$ be a purely inseparable extension of $k$.
\begin{itemize}
\item[(1)]
Let $l$ be a finite separable extension of $k$. Then, 
the tensor product $k'\otimes_kl$ is a field and, after taking embeddings of $k'$ and $l$ into $k^{\alg}$, we have $k'\otimes_kl\xrightarrow{\sim}k'l$.
\item[(2)]
Let $l'$ be a finite separable extension of $k'$ and we denote by $l$ the separable closure of $k$ in $l'$. Then we have $k'\otimes_kl\iso k'l=l'$.
\item[(3)]
The functor $T:\FE_{/k}\rightarrow\FE_{/k'}$ sending $X$ to $X\otimes_kk'$ is an equivalence of categories.
\item[(4)]
For any finite Galois extension $l/k$, the canonical map $\gal(k'l/k')\rightarrow\gal(l/k)$ is an isomorphism. Taking an embedding of $k'^{\sep}$ in $k^{\alg}$,
we have 
$k^{\sep}\otimes_kk'\iso k'k^{\sep}=k'^{\sep}$. The canonical map $\gamma:G_{k'}\rightarrow G_k$ induced by the inclusion $k\subseteq k'$ is an isomorphism of topological groups.
\end{itemize}
\end{lemma}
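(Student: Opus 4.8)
The plan is to prove (1) by a direct primitive-element computation and to deduce (2)--(4) formally from it. For (1), I would write $l=k(\theta)$ with $\theta$ separable over $k$ of minimal polynomial $g\in k[x]$, so that $k'\otimes_k l\cong k'[x]/(g)$; it then suffices to show $g$ remains irreducible over $k'$, i.e. that $[k'(\theta):k']=\deg g$. If not, $g$ would have a proper monic factor $h\in k'[x]$, whose (finitely many) coefficients generate a finite purely inseparable extension $k_1/k$ inside $k'$, so I may assume $k'=k_1$ and choose $q=p^m$ with $(k')^q\subseteq k$. Raising the coefficients of $h$ to the $q$-th power yields $h^{(q)}\in k[x]$ of degree $\deg h$, and the identity $h(x)^q=h^{(q)}(x^q)$ forces $h^{(q)}(\theta^q)=0$; but $\theta$ separable over $k$ gives $k(\theta^q)=k(\theta)=l$, hence $[k(\theta^q):k]=\deg g>\deg h$, a contradiction. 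Thus $k'\otimes_k l$ is a field, and after fixing embeddings it is the compositum $k'l$.

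For (2): by (1), $k'\otimes_k l$ is a field, namely $k'l$, contained in $l'$, and $l'/k'l$ is separable because $l'/k'$ is. Since $l$ is the separable closure of $k$ in $l'$, the separable closure of $l$ in $l'$ is $l$ itself (a subextension of $l'/l$ separable over $l$ would be separable over $k$), so $l'/l$, and a fortiori $l'/k'l$, is purely inseparable; being both separable and purely inseparable, $l'=k'l$. For (3): $T$ takes values in $\FE_{/k'}$ by (1), and I would exhibit a quasi-inverse $S$ sending $\spec(l')$ to $\spec(l_0)$, where $l_0$ is the separable closure of $k$ in $l'$, extended componentwise to finite disjoint unions; this is a functor because $k'$-morphisms carry elements separable over $k$ to elements separable over $k$. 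Part (2) gives a natural isomorphism $T\circ S\simeq\mathrm{id}$, and for $S\circ T\simeq\mathrm{id}$ one checks, using the degree identities from (1) and (2), that $l$ is exactly the separable closure of $k$ in $l\otimes_k k'$.

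For (4): if $l/k$ is finite Galois, then $k'l/k'$ is Galois --- normal, since $l$, and hence $k'l$, is a splitting field of a separable polynomial; and separable, since $k'l\subseteq k'^{\sep}$ --- and the restriction map $\gal(k'l/k')\to\gal(l/k)$ is injective between groups of equal order $[k'l:k']=[l:k]$ by (1), hence an isomorphism. Passing to the projective limit over finite Galois $l/k$, and using (2) to identify $k'^{\sep}$ with $k'k^{\sep}$, yields $k^{\sep}\otimes_k k'\iso k'^{\sep}$ together with the isomorphism of profinite groups $\gamma:G_{k'}\iso G_k$ induced by restriction (continuity in both directions being automatic for an isomorphism of inverse systems of finite groups).

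The crux of the whole argument is (1): once $[k'l:k']=[l:k]$ (equivalently, irreducibility of $g$ over $k'$) is in hand, parts (2)--(4) are routine bookkeeping with separable and inseparable degrees and with projective limits. Within (1), the one genuinely non-formal step is the Frobenius-twist argument excluding a proper factor of $g$ over $k'$, and the reduction to a finite $k_1/k$ is what makes it go through even when $k'/k$ is infinite.
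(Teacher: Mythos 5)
Your proposal is correct, but it takes a different route from the paper in the sense that the paper does not actually prove (1), (2) and (4): it cites Bourbaki (Alg\`ebre V, \S 7, Prop.\ 15--16 for (1)--(2) in the case $[k':k]<\infty$, and V, \S 10, Th\'eor\`eme 5 for (4)) and reduces the general case to the finite case by writing $k'$ as a direct limit of finite purely inseparable subextensions. Your Frobenius-twist argument for (1) --- excluding a proper factor $h$ of $g$ over $k'$ by passing to $h^{(q)}$ and using $k(\theta^q)=k(\theta)$ --- is a self-contained replacement for that citation, and your reduction to the finite subextension $k_1$ generated by the coefficients of $h$ plays exactly the role of the paper's passage to the limit. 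For (3) you and the paper use the identical quasi-inverse $S$ (separable closure of $k$ in $A'$), and your deduction of (4) by comparing orders $[k'l:k']=[l:k]$ and passing to the projective limit is the standard content of the cited Bourbaki theorem. One small wording point in (1): for the identity $h^{(q)}(\theta^q)=0$ you need $h(\theta)=0$, so you should take $h$ to be the irreducible factor of $g$ over $k'$ (equivalently the minimal polynomial of $\theta$ over $k'$) rather than an arbitrary proper monic factor; this factor is proper exactly when $[k'(\theta):k']<\deg g$, so the contradiction goes through unchanged. With that adjustment the argument is complete, and arguably more informative than the paper's, at the cost of length.
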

\begin{proof}
This proposition is a standard result.  When $[k':k]<+\infty$, (1) and (2) are corollaries of \cite[V, \S 7, Prop 15 and 16]{bour}. In general, $k'$ is a direct limit of finite purely inseparable extensions of $k$ contained in $k'$. By a passage to the limit argument, we can reduce (1) and (2) to the case where $[k':k]<+\infty$. By (1) and (2), for any object $\spec(A')$ of $\FE_{/k'}$, the functor $S: \FE_{/k'}\rightarrow \FE_{/k}$ sending $\spec(A')$ to the spectrum of the \'etale closure of $k$ in $A'$ is a quasi-inverse of $T$. Hence, we obtain (3). Part (4) is deduced by (1), (2) and \cite[V, \S 10, Th\'eor\`em 5]{bour}. 
\end{proof}
 
\begin{lemma}\label{fr id}
Assume that the characteristic of $k$ is $p>0$ and let $F:k\rightarrow k, x\mapsto x^p$ be the Frobenius map. Then, the isomorphism $\gamma:G_k\rightarrow G_{k}$ induced by $F$ is the identity map.
\end{lemma}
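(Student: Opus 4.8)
The plan is to unwind the definitions and show that the Frobenius $F\colon k\to k$, although not the identity on $k$ itself when $k$ is imperfect, induces a morphism of Galois categories that is isomorphic to the identity functor, hence induces the identity on $G_k$ after the usual identifications. Concretely, recall that $G_k$ is the automorphism group of the fibre functor $\cF_k$ on $\FE_{/k}$, and the isomorphism $\gamma$ attached to a field homomorphism is the one obtained by transport of structure through the induced equivalence of Galois categories. So the first step is to identify precisely which equivalence $F$ induces: base change along $F$ sends $\spec(l)$ to $\spec(l\otimes_{k,F}k)$, where $k$ is viewed as a $k$-algebra via $F$.

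Next I would observe that $F$ is a purely inseparable extension in the sense of Lemma \ref{purins}: writing $k^{(p)}$ for the copy of $k$ that is the target of $F$, the source $k$ sits inside $k^{(p)}$ via $x\mapsto x^p$ identification, and $k^{(p)}/k$ (through $F$) is purely inseparable of height $\le 1$. Actually the cleanest way is this: $F$ factors the identity of $G_k$ through the comparison isomorphisms of Lemma \ref{purins}(4). More precisely, consider the diagram of fields $k \xrightarrow{F} k \hookrightarrow k^{\alg}$; since raising to the $p$-th power is injective and $k^{\alg}$ is algebraically closed, $x\mapsto x^p$ extends to an automorphism of $k^{\alg}$, and this automorphism restricts to $k^{\sep}$ (because $k^{\sep}$ is the separable closure and Frobenius preserves separability over the prime field). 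Thus we get a commutative square relating $k\hookrightarrow k^{\sep}$ to itself via Frobenius on both ends, and the induced map on $\gal(k^{\sep}/k)$ is the map $g\mapsto F^{-1}\circ g\circ F$. Since $F$ (extended to $k^{\alg}$) commutes with every $k$-automorphism $g$ of $k^{\sep}$ — as $g$ is a ring homomorphism and $g(x^p)=g(x)^p$ — this conjugation is trivial, so $\gamma$ is the identity.

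The key steps in order, then, are: (1) spell out $\gamma$ as the map on Galois groups induced by functoriality of $\cF_k$ under base change along $F$, equivalently as $g\mapsto \tilde F^{-1}g\tilde F$ where $\tilde F$ is a chosen extension of $F$ to $k^{\alg}$ (such an extension exists and is an automorphism since $k^{\alg}$ is perfect and algebraically closed); (2) check that $\tilde F$ preserves $k^{\sep}$, so that conjugation by $\tilde F$ genuinely acts on $G_k=\gal(k^{\sep}/k)$; (3) verify the commutation $\tilde F\circ g=g\circ \tilde F$ for all $g\in G_k$, which is immediate because any ring endomorphism commutes with the $p$-power map; (4) conclude $\gamma=\mathrm{id}$. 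Alternatively one can phrase (2)--(4) functorially: the base-change functor $T_F\colon\FE_{/k}\to\FE_{/k}$ along $F$ is isomorphic to the identity functor via the natural transformation $l\to l\otimes_{k,F}k$, $x\mapsto x\otimes 1$, composed with the isomorphism $l\otimes_{k,F}k\iso l$ coming from $a\otimes b\mapsto a\cdot b^{1/p}$ (well-defined as $l/\bF_p$ and hence $l/k$ need not be, but $l$ itself \emph{is} perfect? no) — this is where a little care is needed.

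The main obstacle is precisely step (2)/(the functorial reformulation): one must be careful that $\tilde F$ restricts to $k^{\sep}$ and not merely to $k^{\alg}$, and that the identification of the base-changed objects with the original ones is natural in the object of $\FE_{/k}$. The subtlety is that for a finite separable $l/k$, the ring $l\otimes_{k,F}k$ need not be isomorphic to $l$ by an obvious map — the isomorphism uses that separable extensions are ``Frobenius-stable'' in the appropriate sense, which is exactly the content one should extract from Lemma \ref{purins} applied with the roles of the purely inseparable piece played by $F$. Once that naturality is in place, the induced map on automorphism groups of the fibre functor is the identity, giving the claim. I expect the cleanest write-up to avoid explicit extensions $\tilde F$ altogether and argue purely through Lemma \ref{purins}(4): apply it to the purely inseparable extension $k'=k$-with-structure-map-$F$ to get $\gamma\colon G_{k'}\iso G_k$, then observe that under the identification $k'=k$ this $\gamma$ is computed by the $p$-power map on $k^{\sep}$, which commutes with all of $G_k$.
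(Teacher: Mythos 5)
Your proposal is correct and rests on exactly the same key observation as the paper's proof, namely that every field homomorphism $g$ commutes with the $p$-power map ($g(x^p)=g(x)^p$), so that the induced map on $G_k$ is trivial; the paper packages this by factoring $F$ as $k\iso k^p\hookrightarrow k$ and checking the identity on each finite Galois quotient $\gal(l/k)$, whereas you conjugate by a global lift $\tilde F$ of $F$ to $k^{\alg}$, but the computation $(\gamma(\sigma)(x))^p=\sigma(x)^p$ is identical in both. Your hesitant aside about the isomorphism $l\otimes_{k,F}k\iso l$ via $a\otimes b\mapsto a\cdot b^{1/p}$ is indeed not well defined and should be dropped, but it is not needed for the conjugation argument you actually carry out.
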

\begin{proof}
We decompose $F:k\rightarrow k$ as the isomorphism $F':k\iso k^p,\ \ x\mapsto x^p$ and the canonical inclusion $F'':k^p\rightarrow k$. Let $l$ be a finite Galois extension of $k$. We see that $l^p$ is a finite Galois extension of $k^p$ and $l=kl^p$. We denote by $\ol\gamma':\gal(l^p/k^p)\iso \gal(l/k)$ and $\ol\gamma'':\gal(l/k)\iso \gal(l^p/k^p)$ isomorphisms induced by $F'$ and $F''$, respectively. Let $\sigma$ be an element of $\gal(l/k)$. For any $x\in l$, we have 
\begin{equation*}
(((\ol\gamma'\ol\gamma'')(\sigma))(x))^p=((\ol\gamma'\ol\gamma'')(\sigma))(x^p)=(\ol\gamma''(\sigma))(x^p)=\sigma(x^p)=\sigma(x)^p\in l
\end{equation*}
 i.e., $((\ol\gamma'\ol\gamma'')(\sigma))(x)=\sigma(x)$. Hence, the isomorphism $\ol\gamma'\ol\gamma'':\gal(l/k)\iso\gal(l/k)$ induced by $F$ is the identity. Hence, $\gamma$ is also an identity.
\end{proof}

\subsection{}\label{erd}
Assume that the characteristic of $k$ is $p>0$ and let 
 $k'$ be a purely inseparable extension of $k$. For an integer $m\geq 0$, we denote by $k^{p^{-m}}$ the field $\{x\in k^{\alg}\,|\, x^{p^m}\in k\}$, which is purely inseparable over $k$. 
 We say that $k'/k$ is {\it untwisted} if $k\not\subseteq k'^p$.  We say that $k'/k$ has {\it finite exponent} if there exists an integer $n\geq 0$ such that $k'^{p^n}\subseteq k$. We say  $k'/k$ has {\it exponent} $n\in \mathbb N$ if $n$ is the smallest integer such that $k'^{p^n}\subseteq k$.



\subsection{}\label{eri}
We continue the notation of \ref{K}. For an extension $L$ of $K$   which is a discrete valuation field with finite ramification index, we denote by $\sO_L$ its integer ring, by $\fm_L$ the maximal ideal of $\sO_L$ and by $\pi_L$ a uniformizer of $L$. Assuming that $L$ is contained in $K^{\alg}$, we denote by $\bv_L$ the valuation of $K^\alg$ normalized by $\bv_L(\pi_L)=1$. We define an ultra-metric norm on $K^{\alg}$ by $|x|_L=p^{-\bv_L(x)}$, for any $x\in K^{\alg}$. 
We have $\bv_L=e\bv_K$ and $|\cdot|_L=(|\cdot|_K)^e$, where $e$ denotes the ramification index of $L/K$.
If $\sO_L$ is complete, for a positive integer $n$, we denote by $D_L^n$ the $n$-dimensional closed poly-disc of radius $1$ over $L$. 

Assume that the characteristic of $K$ is $p>0$ and let $K'$ be a discrete valuation field contained in $K^{\alg}$, purely inseparable over $K$. Let $t\geq 0$ be an integer number such that  $K^{p^{-t}}\subseteq K'$ and $K^{p^{-t-1}}\not\subseteq K'$. We call the ramification index of $K'/K^{p^{-t}}$ the {\it strict ramification index} of $K'/K$. Assume that $K'/K$ has exponent $n\geq 1$.  We call the ramification index of $K^{p^{-n}}/K'$ the {\it dual ramification index} of $K'/K$.

\section{Abbes and Saito's ramification filtrations}
\subsection{}
In this section, we assume that $\sO_K$ is complete. let $A$ be a finite and flat $\sO_K$-algebra, $Z=\{z_1,\ldots,z_n\}$ a finite set of elements generating $A$ over $\sO_K$,  
\begin{equation*}
\sO_K[x_1,\ldots,x_n]/I_Z\xrightarrow{\sim} A,\ \ \ x_i\mapsto z_i
\end{equation*}
a presentation of $A$ associated with $Z$ and $\{f_1,\ldots,f_m\}$ a finite set of generators of the ideal $I_Z$. For any rational number $a>0$, we define an affinoid subdomain $X^a_Z$ of $D_K^n$  by
\begin{align*}
X^a_Z&=\{\ul{x}\in D_K^n;\,\bv_K(f(\ul{x}))\geq a\; \text{for all}\; f\in I_Z\}\\
&=\{\ul{x}\in D_K^n;\,\bv_K(f_i(\ul{x}))\geq a\; \text{for all}\; i=1,\cdots, m\}.
\end{align*}
The collection of affinoid domains $\big\{X^a_Z\big\}_{Z}$ form a projective system when $Z$ goes through all finite sets of elements generating $A$ over $\sO_K$. We denote by $\pi_0^{\mathrm{geom}}(X^a_Z)$ the set of the geometric connected components of $X^a_Z$ with respect to either the weak or the strong $G$-topology.  The projective system $\big\{\pi_0^{\mathrm{geom}}(X^a_Z)\big\}_{Z}$ when $Z$ goes through all finite sets of generators of $A$ is constant (\cite[Lemma 3.1]{as i}).  

Let $\spec(L)$ be an object of $\FE_{/K}$ and $A=\sO_L$ the normalization of $\sO_K$ in $L$. We define a functor $\cF^a_K$ from $\FE_{/K}$ to the category of finite sets by sending $\spec(L)$ to $\pi_0^{\mathrm{geom}}(X^a_Z)$. We simply put $\cF_K^a(L)=\cF_K^a(\spec(L))$. The canonical injection 
\begin{equation}\label{f0tofa}
\lambda:\Hom_K(L,K^\alg)\rightarrow X^a_Z(K^\alg),\ \ \ (\phi:L\rightarrow K^{\alg}) \mapsto (\phi(z_1),\ldots,\phi(z_n)).
\end{equation}
induces a surjective map
\begin{equation}\label{FtoFa}
\ol\lambda:\cF_K(L)\rightarrow \cF^a_K(L),
\end{equation}
compatible with the continuous $G_K$-actions. For any rational number $b>a$, the canonical inclusion $X^b_Z\subseteq X^a_Z$ induces a surjective map $\cF^b_K(L)\rightarrow \cF^a_K(L)$
that factors through $\ol\lambda:\cF_K(L)\rightarrow \cF^a_K(L)$. In summary, we have natural transformations $\cF_K\rightarrow \cF_K^a$ and $\cF_K^b\rightarrow \cF_K^a$ for rational numbers $b>a>0$.

\begin{theorem}[{\cite[Theorem 3.3]{as i}}]\label{thmramfil}
(1) For each rational number $a>0$, there exists a unique closed normal subgroup $G_K^a$ of  $G_K$, such that, for any finite separable extension $L$ of $K$ contained in $K^{\sep}$, we have 
\begin{equation*}
\cF_K^a(L)=\cF_K(L)/G^a_K.
\end{equation*}
In particular, assuming $L/K$ is Galois, we have the following canonical isomorphism 
\begin{equation*}
\cF_K^a(L)\xrightarrow{\sim}\gal(L/K)/\gal(L/K)^a
\end{equation*}
of finite sets with continuous $G_K$-actions, where $\gal(L/K)^a$ denotes the image of $G_K^a$ in $\gal(L/K)$.

(2) Let $b>a>0$ be two rational numbers. We have $G^b_K\subseteq G^a_K$.
\end{theorem}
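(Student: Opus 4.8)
The statement is a representability assertion of the kind familiar from Galois category theory: $\cF_K$ is the fibre functor of the Galois category $\FE_{/K}$ with automorphism group $G_K$, the transformation $\ol\lambda\colon\cF_K\to\cF_K^a$ of \eqref{FtoFa} is pointwise surjective, and $\cF_K^a$ takes values in finite sets. The plan is to construct $G_K^a$ as a projective limit of normal subgroups attached to the finite Galois quotients of $G_K$, the crucial input being a compatibility of these subgroups in towers.

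\textbf{Finite Galois level.} Fix a finite Galois extension $L/K$ inside $K^{\sep}$, with group $G=\gal(L/K)$. Identifying $\cF_K(L)$ with $G$, the group $G_K$ acts on the left through the surjection $G_K\twoheadrightarrow G$, while $G$ acts on the right by translations — the right action coming, via functoriality of $\cF_K^a$, from the action of $G$ on $\spec(L)$ by $K$-automorphisms. The map $\ol\lambda\colon\cF_K(L)\to\cF_K^a(L)$ is equivariant for both actions, so $\cF_K^a(L)$ is the quotient of $G$ by an equivalence relation invariant under left and right translations; any such relation has the form $x\sim y\iff xy^{-1}\in H$ with $H$ a normal subgroup of $G$. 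Writing $\gal(L/K)^a:=H$, we obtain a $G_K$-equivariant identification $\cF_K^a(L)\iso\gal(L/K)/\gal(L/K)^a$. Let $\wt N_L^a\subseteq G_K$ denote the preimage of $\gal(L/K)^a$, an open normal subgroup containing $G_L$. For a general object $\spec(L)$ of $\FE_{/K}$ with $L$ a field, embedding $L$ into a finite Galois $M/K$ and using the $G_K$-equivariant identification $\cF_K(L)=\cF_K(M)/\gal(M/L)$ reduces everything below to the Galois case.

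\textbf{Compatibility in towers — the main obstacle.} For $L\subseteq M$ both finite Galois over $K$, the morphism $\spec(M)\to\spec(L)$ induces a map $\cF_K^a(M)\to\cF_K^a(L)$ compatible with $\cF_K(M)\twoheadrightarrow\cF_K(L)$ via $\ol\lambda$; by the previous paragraph it is the canonical map $\gal(M/K)/\gal(M/K)^a\to\gal(L/K)/\gal(L/K)^a$, so in particular $\wt N_M^a\subseteq\wt N_L^a$. The key point — which I expect to be the hardest — is that $\gal(M/K)^a$ \emph{surjects onto} $\gal(L/K)^a$, equivalently $\wt N_M^a\cdot G_L=\wt N_L^a$. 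This must be extracted from the affinoids: choosing a finite generating set $Z_M$ of $\sO_M$ over $\sO_K$ that contains a generating set $Z_L$ of $\sO_L$, the inclusion of ideals $I_{Z_L}\subseteq I_{Z_M}$ places $X^a_{Z_M}$ inside a product of $X^a_{Z_L}$ with a polydisc, compatibly with the coordinate projection, and one then checks that the induced map on $\pi_0^{\mathrm{geom}}$ has fibres exactly the $\gal(M/L)$-orbits. The surjectivity of $\ol\lambda$ (already available) and the behaviour of $\pi_0^{\mathrm{geom}}$ under products of $\sO_K$-algebras should drive this verification.

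\textbf{Assembling $G_K^a$, uniqueness, and monotonicity.} Granting the tower compatibility, the subgroups $\wt N_L^a$ (over all finite Galois $L/K$) form a filtered family of open normal subgroups: for $L_1,L_2$ take $L=L_1L_2$ and use $\wt N_L^a\subseteq\wt N_{L_i}^a$. Put $G_K^a:=\bigcap_L\wt N_L^a$, a closed normal subgroup of $G_K$. By compactness its image in $\gal(L/K)$ equals $\bigcap_{M\supseteq L}\mathrm{image}(\wt N_M^a\to\gal(L/K))=\gal(L/K)^a$, whence $\wt N_L^a=G_L\cdot G_K^a$ and $\cF_K^a(L)=\cF_K(L)/G_K^a$ for every finite Galois $L/K$; for a general finite separable $L$, choose a Galois $M\supseteq L$ and compute $\cF_K^a(L)=\cF_K^a(M)/\gal(M/L)=(\cF_K(M)/G_K^a)/\gal(M/L)=\cF_K(L)/G_K^a$, which also gives the ``in particular'' clause. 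For uniqueness, if $N\subseteq G_K$ is another closed normal subgroup with $\cF_K^a(L)=\cF_K(L)/N$ for all finite separable $L$, then $N\cdot G_L=\wt N_L^a$ for all finite Galois $L/K$, and intersecting over $L$ (which form a neighbourhood basis of $1$) yields $N=G_K^a$. Finally, for rationals $b>a>0$ the inclusion $X^b_Z\subseteq X^a_Z$ gives a natural transformation $\cF_K^b\to\cF_K^a$ refining $\ol\lambda$; at each finite Galois $L/K$ it is a $G_K$-equivariant surjection $\cF_K(L)/G_K^b\to\cF_K(L)/G_K^a$ commuting with the projections out of $\cF_K(L)$, so $G_K^b$-orbits on $\cF_K(L)$ refine $G_K^a$-orbits, i.e.\ $G_K^bG_L\subseteq G_K^aG_L$ for all such $L$; intersecting over $L$ gives $G_K^b\subseteq G_K^a$, which is (2).
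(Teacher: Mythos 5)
First, a remark on the comparison you asked for: the paper does not prove this statement at all — it is quoted verbatim from Abbes--Saito \cite[Theorem 3.3]{as i} — so there is no internal proof to measure your argument against. Judged on its own terms, your overall architecture is the right one: identifying $\cF^a_K(L)$ for Galois $L/K$ with $\gal(L/K)/\gal(L/K)^a$ via bi-invariance of the fibre relation of $\ol\lambda$, assembling $G^a_K$ as $\bigcap_L\wt N^a_L$, recovering the finite quotients by compactness, and deducing uniqueness and part (2) are all correct, standard profinite-group manipulations.

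The genuine gap sits exactly where you flag ``the main obstacle'' and is not closed by your sketch. The statement that for a tower $K\subseteq L\subseteq M$ of finite Galois extensions the map $\gal(M/K)^a\rightarrow\gal(L/K)^a$ is surjective — equivalently, that the fibres of $\pi_0^{\mathrm{geom}}(X^a_{Z_M})\rightarrow\pi_0^{\mathrm{geom}}(X^a_{Z_L})$ are exactly the $\gal(M/L)$-orbits — is the entire non-formal content of the theorem; everything else in your write-up is soft. Your proposed route (``the inclusion $I_{Z_L}\subseteq I_{Z_M}$ places $X^a_{Z_M}$ inside a product of $X^a_{Z_L}$ with a polydisc \dots one then checks that the induced map on $\pi_0^{\mathrm{geom}}$ has fibres exactly the $\gal(M/L)$-orbits'') establishes only the easy half: surjectivity of $\pi_0^{\mathrm{geom}}(X^a_{Z_M})/\gal(M/L)\rightarrow\pi_0^{\mathrm{geom}}(X^a_{Z_L})$ follows from the surjectivity of $\ol\lambda$, but the injectivity — that two embeddings of $M$ whose restrictions to $L$ land in the same geometric component of $X^a_{Z_L}$ can be moved by $\gal(M/L)$ into the same geometric component of $X^a_{Z_M}$ — is a nontrivial rigid-geometric assertion about how $\pi_0^{\mathrm{geom}}$ of these affinoids behaves under quotients of the algebra $\sO_M$ by a finite group of $\sO_K$-automorphisms, not merely under coordinate projections. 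This is precisely the input that \cite{as i} has to supply (via a criterion for a quotient of the fibre functor to be of the form $\cF_K/H$ with $H$ closed normal, verified for $\cF^a_K$ by an analysis of the affinoids). As written, your proposal is a correct and well-organized reduction of the theorem to this key lemma, but not a proof of the theorem.
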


\begin{definition}\label{rambound}
Extending $G^0_K=G_K$, the decreasing filtration $\{G^a_K\}_{a\in \mathbb Q_{\geq 0}}$ in Theorem \ref{thmramfil} is called the {\it ramification filtration} of $G_K$. For a real number $b>0$, we put 
\begin{equation*}
G_K^{b+}=\ol{\bigcup_{r>b}G^r_K}.
\end{equation*}
Let $\spec(L)$ be an object of $\FE_{/K}$. For a rational number $a>0$, we say the ramification of $L/K$ is {\it bounded} by $a$ (resp. $a+$) if $\ol\lambda:\cF_K(L)\rightarrow \cF^a_K(L)$ is  bijective (resp. $\ol\lambda:\cF_K(L)\rightarrow \cF^b_K(L)$ is a bijection for any $b>a$).
Assume that $L$ is a finite Galois extension of $K$. The quotient filtration $\{\gal(L/K)^a\}_{a\in \mathbb Q_{\geq 0}}$ is call the ramification filtration of $\gal(L/K)$. For a rational number $a>0$, the ramification of $L/K$  is bounded by $a$ (resp. $a+$) if $\gal(L/K)^a=\{1\}$ (resp. $\gal(L/K)^{a+}=\{1\}$). We define the {\it conductor} $c$ of $L/K$ as the infimum of rational numbers $r > 0$ such that the ramification of $L/K$ is bounded by $r$. Then $c$ is a rational number and the ramification of $L/K$ is bounded by $c+$ but not bounded by $c$ (\cite[Proposition 6.4]{as i}).

\end{definition}

\begin{proposition}[{\cite[Proposition 3.7]{as i}}]
(1) For any rational number $1\geq a>0$, the group $G^a_K$ is the inertia subgroup of $G_K$ and $G^{1+}_K$ is the wild inertia subgroup of $G_K$.

(2) Let $L$ be a finite separable extension of $K$ contained in $K^\sep$ of ramification index $e$. Then, for any rational number $a\geq 0$, we have $G_L^{ea}\subseteq G^a_K$. If $L/K$ is unramified, the inclusion is an equality.

(3) If the residue field $F$ of $\sO_K$ is perfect, we have $G^{a+1}_K=G^a_{K,\cl}$, where $G^a_{K,\cl}$ denotes the classical upper numbering filtration (\cite{serre cl}).
\end{proposition}

\subsection{}
In the following of this section,  $L$ denotes a finite separable extension of $K$ of ramification index $e$. Let $Z=\{z_1,\ldots,z_n\}$ be a finite set of elements generating $\sO_L$ over $\sO_K$ and we assume that $Z$ contains a uniformizer of $\sO_L$. Let $P$ be a subset of $\{1,\cdots,n\}$ such that $\{z_i\}_{i\in P}$ contains a uniformizer of $\sO_L$ but does not contain $0$.  We call $(Z,P)$ a {\it logarithmic system} of generators of $\sO_L$ over $\sO_K$. For each $i\in P$, we put $\bv_L(z_i)=e_i$. Let 
\begin{equation*}
\sO_K[x_1,\ldots,x_n]/I_Z\xrightarrow{\sim} \sO_L,\ \ \ x_i\mapsto z_i
\end{equation*}
be the representation of $\sO_L$ associated with $Z$ and $\{f_1,\ldots, f_m\}$ a finite set of generators of the ideal $I_Z$. For each $i\in P$, we take a lifting $h_i\in \sO_K[x_1,\ldots,x_n]$ of the unit $u_i=z_i^{e}/\pi_K^{e_i}\in \sO_L$, and, for each pair $(i,j)\in P^2$, we take a lifting $g_{i,j}\in\sO_K[x_1,\ldots,x_n]$ of the unit $v_{i,j}=z_i^{e_j}/z_j^{e_i}\in\sO_L$. We define an affinoid subdomain $Y^a_{Z,P}$ of $D^n$  by
\begin{equation*}
Y^a_{Z,P}=\left\{\ul{x}=(x_1,\ldots,x_n)\in D_K^n\, \Bigg|
\begin{array}{ccc}\bv_K(f_i(\ul{x}))\geq a,\; \text{for}\; 1\leq i\leq m\\
\bv_K\big(x_i^e-\pi_K^{e_i}h_i(\ul x)\big)\geq a+e_i\; \text{for}\; i\in P\\
\bv_K\big(x_i^{e_j}-x_j^{e_i}g_{i,j}(\ul x)\big)\geq a+e_ie_j/e\; \text{for}\; (i,j)\in P^2
\end{array}\right\}
\end{equation*}
We see that $Y^a_{Z,P}\subseteq X^a_Z$.
The affinoid domains $\big\{Y^a_{Z,P}\big\}_{(Z,P)}$ form a projective system when $(Z,P)$ goes through all logarithmic systems of generators. We denote by $\pi_0^{\mathrm{geom}}(Y^a_{Z,P})$ the set of the geometric connected components of $Y^a_{Z,P}$ with respect to either the weak or the strong $G$-topology.  The projective system $\big\{\pi_0^{\mathrm{geom}}(Y^a_{Z,P})\big\}_{(Z,P)}$ when $(Z,P)$ goes through all logarithmic systems of generators is constant (\cite[Lemma 3.10]{as i}).  

We  take $\cF_{K,\log}^a(L)=\pi_0^{\mathrm{geom}}(Y^a_{Z,P})$. Since an object of $\FE_{/K}$ is a finite disjoint union of connected finite \'etale schemes $X=\coprod_i\spec(L_i)$ over $K$, the map $\cF^a_{K,\log}$ is canonically extended to a functor from $\FE_{/K}$ to the category of finite sets by sending $X$ to $\coprod_i\cF_{K,\log}^a(L_i)$. The canonical injection 
\begin{equation}\label{logf0tofa}
\mu:\Hom_K(L,K^\alg)\rightarrow Y^a_{Z,P}(K^\alg),\ \ \ (\phi:L\rightarrow K^{\alg}) \mapsto (\phi(z_1),\ldots,\phi(z_n)).
\end{equation}
induces a surjective map
\begin{equation}\label{logFtoFa}
\ol\mu:\cF_K(L)\rightarrow \cF^a_{K,\log}(L)=\pi_0^{\mathrm{geom}}(Y^a_{Z,P}),
\end{equation}
compatible with the continuous $G_K$-actions. For any rational number $b>a$, the canonical inclusion $Y^b_{Z,P}\subseteq Y^a_{Z,P}$ induces a surjective map $\cF^b_{K,\log}(L)\rightarrow \cF^a_{K,\log}(L)$
that factors through $\ol\mu:\cF_{K}(X)\rightarrow \cF^a_{K,\log}(X)$. In summary, we have natural transformations $\cF_{K}\rightarrow \cF_{K,\log}^a$ and $\cF_{K,\log}^b\rightarrow \cF_{K,\log}^a$ for rational numbers $b>a>0$.

\begin{theorem}[{\cite[Theorem 3.11]{as i}}]\label{thmlogramfil}
(1) For each rational number $a>0$, there exists a unique closed normal subgroup $G_{K,\log}^a$ of  $G_K$, such that, for any finite extension $L$ contained in $K^{\sep}$, we have 
\begin{equation*}
\cF_{K,\log}^a(L)=\cF_K(L)/G^a_{K,\log}.
\end{equation*}
In particular, assuming that $L/K$ is Galois, we have the following canonical isomorphism 
\begin{equation*}
\cF_{K,\log}^a(L)\xrightarrow{\sim}\gal(L/K)/\gal(L/K)^a_{\log}
\end{equation*}
of finite sets with continuous $G_K$-actions, where $\gal(L/K)^a_{\log}$ denotes the image of $G_{K,\log}^a$ in $\gal(L/K)$.

(2) Let $b>a>0$ be two rational numbers. We have $G^b_{K,\log}\subseteq G^a_{K,\log}$.
\end{theorem}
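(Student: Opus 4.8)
This is the logarithmic counterpart of Theorem \ref{thmramfil}, and the plan is to run the same Galois-category argument, with inputs the surjective $G_K$-equivariant natural transformation $\ol\mu\colon\cF_K\to\cF^a_{K,\log}$ and the transition maps $\cF^b_{K,\log}\to\cF^a_{K,\log}$ constructed above, together with the constancy of $\{\pi_0^{\mathrm{geom}}(Y^a_{Z,P})\}_{(Z,P)}$ from \cite[Lemma 3.10]{as i} (which makes $\cF^a_{K,\log}$ well defined). Since $\cF_K$ and $\cF^a_{K,\log}$ both carry finite disjoint unions to finite disjoint unions, one reduces at once to describing $\cF^a_{K,\log}(L)$ for $L$ a field, finite separable over $K$; then, choosing $M/K$ finite Galois with $L\subseteq M$ and using the morphism $\spec(M)\to\spec(L)$ of $\FE_{/K}$, one reduces further to the case $L/K$ Galois — provided one knows the descent identity $\cF^a_{K,\log}(M)/\gal(M/L)\iso\cF^a_{K,\log}(L)$, the logarithmic analogue of $\cF_K(M)/\gal(M/L)\iso\cF_K(L)$.

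So assume $L=M$ Galois and put $G=\gal(M/K)$. The set $\cF_K(M)=\Hom_K(M,K^{\sep})$ is a $G$-torsor under the $G_K$-action, which factors through $G$; since $\ol\mu$ is surjective and $G$-equivariant, $G$ acts transitively on $S:=\cF^a_{K,\log}(M)$, so fixing the base point $s_0=\ol\mu(\iota_M)$ given by the inclusion $\iota_M\colon M\hookrightarrow K^{\sep}$ yields an identification $S\iso G/N_M$ with $N_M$ the stabilizer of $s_0$. To see $N_M$ is normal, note that for $\sigma\in G$ the $K$-automorphism $\sigma$ of $M$ gives an automorphism $\spec(\sigma)$ of $\spec(M)$ in $\FE_{/K}$, and $\cF_K(\spec(\sigma))$ is right translation by $\sigma$ on $\cF_K(M)=G$; by naturality of $\ol\mu$, the induced automorphism $\cF^a_{K,\log}(\spec(\sigma))$ of $S$ is right translation by $\sigma$ on $G/N_M$. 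Thus the partition of $G$ into fibres of $\ol\mu$ is stable under both left and right translations by $G$, and a bi-invariant partition of a group whose block through the identity is $N_M$ forces $N_M$ to be a normal subgroup.

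For $M'\supseteq M\supseteq K$ with $M'/K$ finite Galois, the morphism $\spec(M')\to\spec(M)$ gives a $G_K$-equivariant surjection $\cF^a_{K,\log}(M')\to\cF^a_{K,\log}(M)$ carrying base point to base point, and comparison with $\cF_K(M')\to\cF_K(M)$ shows that $N_{M'}$ maps onto $N_M$ under $\gal(M'/K)\to\gal(M/K)$. Therefore $G^a_{K,\log}:=\varprojlim_M N_M$ is a closed normal subgroup of $G_K=\varprojlim_M\gal(M/K)$ with image $N_M$ in each $\gal(M/K)$; defining $\gal(L/K)^a_{\log}$ to be the image of $G^a_{K,\log}$ and invoking the reductions of the first paragraph, one obtains $\cF^a_{K,\log}(L)=\cF_K(L)/G^a_{K,\log}$ for all finite separable $L\subseteq K^{\sep}$, with the stated isomorphism when $L/K$ is Galois. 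Uniqueness is clear, since any closed normal subgroup realizing these quotients has prescribed image in every $\gal(L/K)$. This gives (1). For (2), apply the transition map $\cF^b_{K,\log}\to\cF^a_{K,\log}$ on a finite Galois $M/K$: it is a $G$-equivariant surjection $G/N_M^b\to G/N_M^a$ fixing base points, whence $N_M^b\subseteq N_M^a$, and passing to the inverse limit yields $G^b_{K,\log}\subseteq G^a_{K,\log}$.

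The genuine obstacle is the descent identity $\cF^a_{K,\log}(M)/\gal(M/L)\iso\cF^a_{K,\log}(L)$ used in the reduction step: in contrast with its counterpart for $\cF_K$, it compares geometric connected components of the affinoids $Y^a_{Z_M,P_M}$ and $Y^a_{Z_L,P_L}$, whose defining inequalities are written with the two different ramification indices of $M/K$ and $L/K$; establishing this — and, more basically, the continuity of the $G_K$-action on $\pi_0^{\mathrm{geom}}(Y^a_{Z,P})$ — is where the rigid geometry of \cite[\S3]{as i} enters. Everything else is the formal Galois-category dictionary, word for word as in the proof of Theorem \ref{thmramfil}.
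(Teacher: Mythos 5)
The paper does not prove this statement: it is quoted verbatim from \cite[Theorem 3.11]{as i}, so there is no internal proof to compare against. Measured against the actual argument in \emph{loc.\ cit.}, your reconstruction of the formal, Galois-categorical part is essentially the right one: reduction to connected and then to Galois objects, bi-invariance of the fibre partition of $\ol\mu$ forcing the stabilizer $N_M$ to be normal, passage to the inverse limit, and the derivation of (2) from the transition maps all match the structure of Abbes--Saito's proof. You also correctly isolate the non-formal content, namely the descent identity $\cF^a_{K,\log}(M)/\gal(M/L)\iso \cF^a_{K,\log}(L)$, which is where the rigid geometry of the affinoids $Y^a_{Z,P}$ (\cite[Lemma 3.9, 3.10]{as i}) enters; deferring this to \cite{as i} is consistent with the paper's own treatment, which defers the entire theorem.

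One genuine imprecision should be fixed: in your third paragraph you claim that $N_{M'}$ surjects onto $N_M$ ``by comparison with $\cF_K(M')\to\cF_K(M)$.'' This is not formal. From the commutative square one only gets $q(N_{M'})\subseteq N_M$ for $q:\gal(M'/K)\to\gal(M/K)$; the equality $q(N_{M'})=N_M$ is equivalent to the descent identity $\cF^a_{K,\log}(M')/\gal(M'/M)\iso\cF^a_{K,\log}(M)$ applied to the Galois pair $M'/M$. (A commutative square with all the stated surjectivities and equivariances can perfectly well have $N_{M'}=\{1\}$ while $N_M\neq\{1\}$.) Without this, $\varprojlim_M N_M$ is still a closed normal subgroup, but its image in $\gal(M/K)$ could be strictly smaller than $N_M$, and the identity $\cF^a_{K,\log}(M)=\cF_K(M)/G^a_{K,\log}$ would fail. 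So the ``genuine obstacle'' you name at the end is needed in two places, not one; once it is granted in both, the rest of your argument is correct.
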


\begin{definition}
Extending $G^0_{K,\log}$ by the inertia subgroup $I_K$ of $G_K$, the decreasing filtration $\{G^a_{K,\log}\}_{a\in \mathbb Q_{\geq 0}}$ in Theorem \ref{thmramfil} is called the {\it logarithmic ramification filtration} of $G_K$. For a real number $b>0$, we put 
\begin{equation*}
G_{K,\log}^{b+}=\ol{\bigcup_{r>b}G^r_{K,\log}}.
\end{equation*}
 For a rational number $a>0$, we say the logarithmic ramification of $L/K$ is {\it bounded} by $a$ (resp. $a+$) if $\ol\mu:\cF_K(L)\rightarrow \cF^a_{K,\log}(L)$ is a bijection (resp. $\ol\mu:\cF_K(L)\rightarrow \cF^b_{K,\log}(L)$ is bijective for any $b>a$).
Assume that $L/K$ is Galois. The quotient filtration $\{\gal(L/K)^a_{\log}\}_{a\in \mathbb Q_{\geq 0}}$ is call the logarithmic ramification filtration of $\gal(L/K)$. For a rational number $a>0$, the logarithmic ramification of $L/K$ {\it is  bounded by} $a$ (resp. $a+$) if $\gal(L/K)^a_{\log}=\{1\}$ (resp. $\gal(L/K)^{a+}_{\log}=\{1\}$). We define the {\it logarithmic conductor} $c$ of $L/K$ as the infimum of rational numbers $r > 0$ such that the logarithmic ramification of $L/K$ is bounded by $r$. Then $c$ is a rational number and the logarithmic ramification of $L/K$ is bounded by $c+$ but not bounded by $c$ ([AS1, 9.5]).
\end{definition}

\begin{proposition}[{\cite[Proposition 3.15]{as i}}]
(1) The subgroup $G^{0+}_{K,\log}$ is the wild inertia subgroup of $G_K$.

(2) Let $L$ be a finite separable extension of $K$ contained in $K^\sep$ of ramification index $e$. Then, for any rational number $a\geq 0$, we have $G_{L,\log}^{ea}\subseteq G^a_{K,\log}$. If $L/K$ is tamely ramified, the inclusion is an equality.

(3) For a rational number $a\geq 0$, we have $G^{a+1}_K\subseteq G^a_{K,\log}\subseteq G^a_{K}$.  If the residue field of $K$ is perfect, we have $G^{a}_{K,\log}=G^a_{K,\cl}$.
\end{proposition}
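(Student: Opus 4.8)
The final statement to prove is Proposition~3.15 from \cite{as i}, concerning the logarithmic ramification filtration: part (1) identifies $G^{0+}_{K,\log}$ as the wild inertia; part (2) gives the inclusion $G^{ea}_{L,\log}\subseteq G^a_{K,\log}$ for a finite separable extension $L/K$ of ramification index $e$, with equality when $L/K$ is tame; and part (3) compares the logarithmic and non-logarithmic filtrations and recovers the classical filtration in the perfect-residue-field case. Since this is a recollection from \cite{as i}, the expected ``proof'' is really a guided translation of the constructions just set up into the assertions, citing \cite{as i} where a genuine geometric input is needed.

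\textbf{Plan for (1).} I would use the explicit affinoid $Y^a_{Z,P}$. For $a$ small (close to $0$), the defining inequalities $\bv_K(x_i^e-\pi_K^{e_i}h_i(\underline x))\ge a+e_i$ and $\bv_K(x_i^{e_j}-x_j^{e_i}g_{i,j}(\underline x))\ge a+e_ie_j/e$ become very weak, so $Y^a_{Z,P}$ is a ``large'' tubular neighborhood; one shows that for a tamely ramified $L/K$ the map $\overline\mu:\cF_K(L)\to\cF^a_{K,\log}(L)$ is bijective for every $a>0$ (the Kummer-type equation splits the disc into connected pieces separated at radius $0$), hence $G^{0+}_{K,\log}$ kills every tame extension; conversely, for a wildly ramified $L$ one shows some small $a$ already has $\cF^a_{K,\log}(L)$ a proper quotient, so $G^{0+}_{K,\log}$ acts nontrivially. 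This identifies $G^{0+}_{K,\log}$ with $\bigcap$ of the kernels on tame extensions, i.e.\ the wild inertia. I would simply cite \cite[Proposition 3.15(1)]{as i} for the geometric connectedness computation and assemble the group-theoretic conclusion from Theorem~\ref{thmlogramfil}.

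\textbf{Plan for (2).} Fix a logarithmic system $(Z,P)$ of generators of $\sO_M$ over $\sO_K$ for a large Galois $M\supseteq L$, and compare it with the induced logarithmic system over $\sO_L$. The ramification index $e$ enters exactly as a rescaling of the valuation: $\bv_L=e\,\bv_K$. Pulling the affinoid defining conditions for $\sO_M/\sO_L$ back through the inclusion $D^n_L\hookrightarrow D^n_K$, the condition ``$\bv_L(\cdot)\ge ea$'' on the $L$-side matches the $K$-condition at level $a$ up to the extra logarithmic terms, and one checks $Y^{ea}_{Z_L,P_L}$ maps into (the base change to $L$ of) $Y^{a}_{Z,P}$. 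Passing to geometric connected components, this yields a $G_L$-equivariant surjection $\cF^{ea}_{K,\log}(\,\cdot\,)\to\cF^{a}_{K,\log}(\,\cdot\,)\big|_{G_L}$ compatible with $\overline\mu$, which by Theorem~\ref{thmlogramfil}(1) forces $G^{ea}_{L,\log}\subseteq G^a_{K,\log}$ inside $G_K$. When $L/K$ is tame, $e$ is prime to $p$ and $\sO_L=\sO_K[\pi_L]$ with $\pi_L^e=\pi_K u$; here the logarithmic correction terms $x_i^e-\pi_K^{e_i}h_i$ are precisely what make the two affinoid towers cofinal rather than merely comparable, giving equality. Again the genuinely geometric step (cofinality of the two projective systems of affinoids) is \cite[Proposition 3.15(2)]{as i}; I would reproduce the bookkeeping of exponents and cite it for the connectedness.

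\textbf{Plan for (3).} The inclusion $G^{a+1}_K\subseteq G^a_{K,\log}\subseteq G^a_K$ follows by comparing the affinoids $Y^a_{Z,P}$, $X^a_Z$ and $X^{a+1}_Z$ attached to the \emph{same} set of generators $Z$ (enlarged to contain a uniformizer of $\sO_L$). Since $Y^a_{Z,P}$ is cut out inside $X^a_Z$ by the additional logarithmic inequalities, we have $Y^a_{Z,P}\subseteq X^a_Z$, hence a surjection $\cF^a_{K,\log}(L)\twoheadleftarrow\cF^a_K(L)$ ... wait, the inclusion of affinoids gives the surjection of $\pi_0^{\mathrm{geom}}$ the other way, namely $\cF^a_{K,\log}(L)\to\cF^a_K(L)$ factoring $\overline\mu$, giving $G^a_{K,\log}\subseteq G^a_K$; for the other inclusion one shows $X^{a+1}_Z\subseteq Y^a_{Z,P}$ by a direct estimate on the exponents $e_i$ (using $e_i\le e$ and $\bv_K\ge \bv_K\big(\text{uniformizer contributions}\big)$), giving $G^{a+1}_K\subseteq G^a_{K,\log}$. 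For the perfect residue field case, one invokes that when $F$ is perfect the affinoid $X^a_Z$ (resp.\ $Y^a_{Z,P}$) reduces to the classical construction via Herbrand's function, so $G^a_{K,\log}=G^a_{K,\cl}$; this is the content of \cite[Proposition 3.7(3)]{as i} translated through the shift, and I would cite \cite[Proposition 3.15(3)]{as i}.

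\textbf{Main obstacle.} The only non-formal content is the comparison of \emph{geometric connected components} of the two towers of affinoids under base change $K\rightsquigarrow L$ and under the passage between $Y$ and $X$: connectedness is not preserved by arbitrary affinoid inclusions, so one must verify that the relevant inclusions are cofinal (or induce the claimed maps on $\pi_0^{\mathrm{geom}}$) rather than merely set-theoretic. This rigid-geometric step — controlling how radii of discs interact with the ramification index and the logarithmic correction terms — is exactly where \cite{as i} does the real work, and in this exposition I would carry the exponent bookkeeping explicitly but defer that connectedness input to \cite[Proposition 3.15]{as i} and \cite[Lemmas 3.1, 3.10]{as i}.
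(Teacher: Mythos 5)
The paper offers no proof of this proposition: it is quoted verbatim from \cite[Proposition 3.15]{as i}, so there is no in-paper argument to compare against, and your decision to carry only the bookkeeping while deferring the rigid-geometric content (connectedness of the affinoids, constancy of the projective systems) to \cite{as i} is exactly what the paper itself does. Your outline of how \cite{as i} proceeds is broadly faithful, and your self-correction in part (3) lands on the right direction of the induced map on $\pi_0^{\mathrm{geom}}$.

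Two of your ``direct estimates'' are too optimistic as written, though both are repairable. First, the inclusion $X^{a+1}_Z\subseteq Y^a_{Z,P}$ does not follow from $e_i\le e$: for $\ul x\in X^{a+1}_Z$ one only gets $\bv_K\big(x_i^e-\pi_K^{e_i}h_i(\ul x)\big)\ge a+1$ from $x_i^e-\pi_K^{e_i}h_i\in I_Z$, whereas the logarithmic condition demands $\ge a+e_i$; this is an implication only when $e_i\le 1$. The fix is to use that $\pi_0^{\mathrm{geom}}(Y^a_{Z,P})$ is independent of the logarithmic system and to take $P=\{\iota\}$ with $z_\iota$ a uniformizer (so $e_\iota=1$ and the pair condition is vacuous), after which the inclusion does hold. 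Second, the perfect-residue-field equality $G^a_{K,\log}=G^a_{K,\cl}$ is not obtained by ``translating \cite[Proposition 3.7(3)]{as i} through the shift'': combining $G^{a+1}_K=G^a_{K,\cl}$ with the sandwich $G^{a+1}_K\subseteq G^a_{K,\log}\subseteq G^a_K$ only traps $G^a_{K,\log}$ between $G^a_{K,\cl}$ and $G^{a-1}_{K,\cl}$, which is strictly weaker; the equality requires the separate computation for monogenic extensions carried out in \cite{as i}. Neither point is fatal since you cite \cite[Proposition 3.15]{as i} for the full statement, but as self-contained arguments those two steps would not close.
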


\begin{remark}
Let $K_0$ be a henselian discrete valuation field and we denote by $\wh K_0$ the fraction field of the formal completion $\wh\sO_{K_0}$. By  Artin's algebraization theorem (\cite{artin}), we have a canonical isomorphism 
$G_{\wh K_0}\iso G_{K_0}$. Hence Abbes and Saito's ramification filtrations are also applied to henselian discrete valuation fields.
\end{remark}

\section{Shift of ramification filtrations by purely inseparable extensions}

\subsection{}
In this section, we assume that $\sO_K$ is complete and that the characteristic of $K$ is $p>0$. Let $K'$ be a complete discrete valuation field purely inseparable over $K$ contained in $K^{\alg}$, $m$ the ramification index of $K'/K$ and $\gamma: G_{K'}\iso G_K$ the isomorphism induced by the inclusion $K\subseteq K'$ (Lemma \ref{purins}).

\begin{lemma}\label{diagaffinoid}
 Let $L$ be a finite separable extension of $K$ contained in $K^{\sep}$ and $L'=K'L$. We take a set of generators $Z=\{z_1,\ldots,z_n\}$ of $\sO_L$ over $\sO_K$ and we extend to a set of generators $Z=\{z_1,\ldots,z_n,z_{n+1},\ldots, z_{n'}\}$ of $\sO_{L'}$ over $\sO_{K'}$.We have the following presentations
\begin{align*}
&\sO_K[x_1,\ldots, x_n]\big/I_Z\iso\sO_L, \ \ \ x_i\mapsto z_i.\\
&\sO_{K'}[x_1,\ldots, x_{n'}]\big/I_{Z'}\iso\sO_{L'}, \ \ \ x_i\mapsto z_i.
\end{align*}
We take a logarithmic systems of generators $(Z,P)$ of $\sO_L$ (resp. $(Z', P')$ of $\sO_{L'}$) and we assume that $P\subseteq P'$.

Then,
for any rational number $a\geq 0$, we have morphisms of affinoids $\pr_X:X^{ma}_{Z'}\to X^a_Z\times_KK'$ and $\pr_Y:Y^{ma}_{Z',P'}\to Y^a_{Z,P}\times_KK'$ such that the following diagrams are commutative
\begin{equation}\label{keydiag}
\xymatrix{\relax
\Hom_{K'}(L',K^{\alg})\ar[r]^-(0.5){\lambda'}\ar[d]_{\cdot|_L}&X^{ma}_{Z'}(K^{\alg})\ar[d]^{\pr_X}&&\Hom_{K'}(L',K^{\alg})\ar[r]^-(0.5){\mu'}\ar[d]_{\cdot|_L}&Y^{ma}_{Z',P'}(K^{\alg})\ar[d]^{\pr_Y}\\
\Hom_{K}(L,K^{\alg})\ar[r]_-(0.5){\lambda}&X^a_Z({K^{\alg}})&&\Hom_{K}(L,K^{\alg})\ar[r]_-(0.5){\mu}&Y^a_{Z,P}({K^{\alg}})}
\end{equation}
where $\lambda$, $\lambda'$, $\mu$ and $\mu'$ are canonical injections (\ref{f0tofa} and \ref{logf0tofa}).
\end{lemma}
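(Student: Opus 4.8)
The plan is to construct the maps $\pr_X$ and $\pr_Y$ directly on the level of coordinate rings and then check, one commutation relation at a time, that the defining inequalities of the source affinoid map into the defining inequalities of the target. First I would observe that since $\{z_1,\dots,z_n\}\subseteq\{z_1,\dots,z_{n'}\}$, the inclusion of polynomial rings $\sO_K[x_1,\dots,x_n]\hookrightarrow\sO_{K'}[x_1,\dots,x_{n'}]$ induces, after base change along $K\subseteq K'$, a map $\sO_{K'}[x_1,\dots,x_n]\to\sO_{K'}[x_1,\dots,x_{n'}]$; dually this is nothing but the projection $D_{K'}^{n'}\to D_{K'}^{n}$ onto the first $n$ coordinates, $(x_1,\dots,x_{n'})\mapsto(x_1,\dots,x_n)$, and $X^a_Z\times_K K' = X^a_{Z,K'}$ is cut out in $D_{K'}^n$ by the same generators $f_1,\dots,f_m$ of $I_Z$ (now viewed in $\sO_{K'}[x_1,\dots,x_n]$). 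So $\pr_X$ will be the restriction of this coordinate projection, and I need only check that a point $\ul x\in X^{ma}_{Z'}(K^\alg)$ has image satisfying $\bv_K(f_i(x_1,\dots,x_n))\geq a$ for $i=1,\dots,m$. Since each $f_i\in I_Z\subseteq I_{Z'}$, the point $\ul x$ satisfies $\bv_{K'}(f_i(\ul x))\geq ma$, and using $\bv_{K'}=m\,\bv_K$ from \ref{eri} this is exactly $\bv_K(f_i(\ul x))\geq a$. The commutativity of the left square is then immediate: $\lambda'$ sends $\phi\colon L'\to K^\alg$ to $(\phi(z_1),\dots,\phi(z_{n'}))$, and forgetting the last $n'-n$ coordinates gives $(\phi|_L(z_1),\dots,\phi|_L(z_n))=\lambda(\phi|_L)$.

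For $\pr_Y$ the same coordinate projection is used, restricted to $Y^{ma}_{Z',P'}$; I need to check that its image lands in $Y^a_{Z,P}\times_K K'$. There are three families of inequalities to verify. The $f_i$-inequalities are handled exactly as above. For the inequalities $\bv_K\big(x_i^e-\pi_K^{e_i}h_i(\ul x)\big)\geq a+e_i$ indexed by $i\in P$: here one must be careful, because the ramification index $e$ of $L/K$ and the ramification index $e'$ of $L'/K$ need not agree, and likewise the lifts $h_i$ chosen for $L$ and for $L'$ differ. Writing $e'=em$ (since $[L':L]$ contributes only the purely inseparable factor... — more precisely $\bv_{L'}=m\,\bv_L$, so $e'=em$), the relevant $L'$-unit is $u_i'=z_i^{e'}/\pi_{K'}^{e_i'}$ where $e_i'=\bv_{L'}(z_i)=m e_i$, and one relates $h_i$ (a lift over $\sO_K$) to $h_i'$ (a lift over $\sO_{K'}$) through the observation that $z_i^{e'}=(z_i^e)^m$ and $\pi_{K'}$ differs from a power of $\pi_K$ up to a unit. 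The cleanest route is: from $\ul x\in Y^{ma}_{Z',P'}$ we get $\bv_{K'}\big(x_i^{e'}-\pi_{K'}^{e_i'}h_i'(\ul x)\big)\geq ma+e_i'=ma+me_i=m(a+e_i)$, divide by $m$ to get $\bv_K(\,\cdot\,)\geq a+e_i$, and then show this is equivalent to the corresponding $L$-inequality by a telescoping/unit argument comparing $x_i^{e'}-\pi_{K'}^{e_i'}h_i'$ with $(x_i^e-\pi_K^{e_i}h_i)^{\,?}$ — in fact it suffices that both $x_i^e-\pi_K^{e_i}h_i$ and $u_i$ reduce correctly modulo $\fm$, so that the defining condition of $Y^a_{Z,P}$ depends only on the datum up to the choices, which is precisely the content of \cite[Lemma 3.10]{as i}. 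The cross-term inequalities $\bv_K\big(x_i^{e_j}-x_j^{e_i}g_{i,j}(\ul x)\big)\geq a+e_ie_j/e$ for $(i,j)\in P^2\subseteq P'^2$ are handled in exactly the same way, noting that $e_i'e_j'/e' = (me_i)(me_j)/(em)=me_ie_j/e$, so dividing the $L'$-inequality by $m$ lands on the nose.

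The main obstacle I anticipate is bookkeeping the discrepancy between the two systems of auxiliary data — the lifts $h_i,g_{i,j}$ over $\sO_K$ versus $h_i',g_{i,j}'$ over $\sO_{K'}$, and the uniformizers $\pi_K,\pi_{K'}$ — and being sure the inclusion of inequalities is genuinely valid rather than merely true after a further refinement of generators. The safe way around this, which I would adopt, is to invoke the independence of $\pi_0^{\mathrm{geom}}(X^a_Z)$ (resp. $\pi_0^{\mathrm{geom}}(Y^a_{Z,P})$) of the presentation data and work in the projective limit, so that it is enough to exhibit $\pr_X,\pr_Y$ for \emph{one convenient} choice of $(Z,P)$ and $(Z',P')$ compatible as in the hypothesis $P\subseteq P'$; with that reduction the coordinate projection does the job and the three inequality checks become the short computations sketched above. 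The commutativity of both squares is then a formal consequence of the fact that $\lambda,\lambda',\mu,\mu'$ are all "evaluate at the generators" maps and $\cdot|_L$ is "forget the extra generators."
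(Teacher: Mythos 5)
Your construction of the map itself (the coordinate projection $\pr_{1\ldots n}$), your treatment of the non-logarithmic case via $f_i\in I_Z\subseteq I_{Z'}$ and $\bv_{K'}=m\bv_K$, and your verification of the commutativity of the two squares all agree with the paper. The gap is in the logarithmic case, which is the entire technical content of the lemma. First, your bookkeeping of indices is wrong: writing $e$, $e'$, $m$, $m'$ for the ramification indices of $L/K$, $L'/K'$, $K'/K$, $L'/L$, the correct relation is $em'=e'm$; your claims $\bv_{L'}=m\,\bv_L$ (i.e.\ $m'=m$) and $e'=em$ are mutually inconsistent and false in general, as is $e_i'=me_i$ (one has $e_i'=m'e_i$), so the arithmetic $ma+e_i'=m(a+e_i)$ does not hold. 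Second, and more seriously, the key step --- passing from the $K'$-level congruence $\bv_{K'}\bigl(t_i^{e'}-\pi_{K'}^{e_i'}h_i'(\ul t')\bigr)\geq ma+e_i'$ to the $K$-level congruence $\bv_K\bigl(t_i^{e}-\pi_K^{e_i}r_i(\ul t)\bigr)\geq a+e_i$ --- is not an ``equivalence'' obtainable by a telescoping argument, because the exponents, the uniformizers, and the chosen liftings all differ. Your appeal to \cite[Lemma 3.10]{as i} does not close this: that lemma gives independence of $\pi_0^{\mathrm{geom}}(Y^a_{Z,P})$ of the choice of $(Z,P)$ and of the liftings \emph{over a fixed base field}, whereas the issue here is the comparison across the base change $K\subseteq K'$ with its change of uniformizer and ramification index.

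What the paper actually does (following \cite[Lemma 9.6]{as i}) is fix $\iota\in P'$ with $z_\iota$ a uniformizer of $L'$ and factor each $L$-level unit \emph{multiplicatively} through $L'$-level units, e.g.
\begin{equation*}
z_i^e/\pi_K^{e_i}=\bigl(\pi_{K'}^m/\pi_K\bigr)^{e_i}\bigl(z_\iota^{e'}/\pi_{K'}\bigr)^{me_i}\bigl(z_i/z_\iota^{e'_i}\bigr)^{e},
\end{equation*}
which is an identity precisely because $e'me_i=ee'_i$ follows from $em'=e'm$. One then uses two facts: any two polynomial liftings of the same element of $\sO_{L'}$ differ by an element of $I_{Z'}$, hence agree to valuation $\geq ma$ at every point of $X^{ma}_{Z'}(K^{\alg})$; and for integral $x,y$ one has $\bv(x^N-y^N)\geq\bv(x-y)$, so the defining inequalities of $Y^{ma}_{Z',P'}$ control each factor. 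The cross terms $z_j^{e_i}/z_i^{e_j}=(z_j/z_\iota^{e'_j})^{e_i}(z_\iota^{e'_i}/z_i)^{e_j}$ are handled the same way. Your reduction to ``one convenient choice of $(Z,P)$ and $(Z',P')$'' is legitimate for the intended application (the conclusion is only used at the level of $\pi_0^{\mathrm{geom}}$), but it does not remove the need for this multiplicative comparison, which is the missing idea in your sketch.
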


\begin{proof}
It is sufficient to show that $X^{ma}_{Z'}(K^{\alg})$ maps to
$X^{a}_{Z}(K^{\alg})$ (resp. $Y^{ma}_{Z',P'}(K^{\alg})$ maps to $Y^{a}_{Z,P}(K^{\alg})$) through the projection 
\begin{equation*}
\pr_{1\ldots n}:D_{K'}^{n'}(K^{\alg})\rightarrow D^n_K(K^{\alg}),\ \ \  (t_1,\ldots,t_{n'})\mapsto (t_1,\ldots,t_n).
\end{equation*}
 This is easy to verify for $X^{ma}_{Z'}(K^{\alg})$ from the definition. The proof for $Y^{ma}_{Z',P'}(K^{\alg})$ is an analogue of \cite[Lemma 9.6]{as i}. 
 
Let $e$ (resp. $e'$ and $m'$) be the ramification index of $L/K$ (resp. $L'/K'$ and $L'/L$). We have $em'=e'm$. For $i\in P'$, we put $e'_i=\bv_{L'}(z_i)$ and, for $i\in P$, we put $e_i=\bv_{L}(z_i)$. We have $e'_i=m'e_i$ for $i\in P$. We fix $\iota\in P'$ such that $z_\iota$ is a uniformizer of $L'$. Let  $u$ be the unit $\pi_{K'}^m/\pi_K\in \sO_{K'}$, $g\in \sO_{K'}[x_1,\ldots, x_{n'}]$ a lifting of $z^{e'}_\iota/\pi_{K'}\in \sO_{L'}$,  for $i\in P$, $h_i\in \sO_{K'}[x_1,\ldots, x_{n'}]$ a lifting of $z_i/z_\iota^{e'_i}$, $l_i\in \sO_{K'}[x_1,\ldots, x_{n'}]$ a lifting of $z_\iota^{e'_i}/z_i$, $r_i\in \sO_K[x_1,\ldots,x_n]$ a lifting of $z_i^e/\pi_K^{e_i}$, and, for $(i,j)\in P$, $r_{i,j}\in \sO_K[x_1,\ldots,x_n]$ a lifting of $z_j^{e_i}/z_i^{e_j}\in \sO_L$. Let $\ul t'=(t_1,\ldots,t_{n'})\in D_{K'}^{n'}(K^{\alg})$ be an element in $Y^{ma}_{Z',P'}(K^{\alg})$ and we denote by $\ul t=(t_1,\ldots,t_{n})$ its image in  $D^n_K(K^{\alg})$.  Notice that $g(\ul t')$, $h_i(\ul t')$, $l_i(\ul t')$, $r_i(\ul t)$ and $r_{i,j}(\ul t)$  are units in the integer ring $\sO_{K^{\alg}}$ and that $\bv_{K}(t_i)=\bv_{K}(z_i)=e_i/e$ (\cite[Lemma 3.9]{as i}). 

Since $\pr_{1\ldots n}$ maps $X^{ma}_{Z'}(K^{\alg})$ to
$X^{a}_{Z}(K^{\alg})$, to show $\ul t$ is contained in $Y^{a}_{Z, P}(K^{\alg})$, it is sufficient to check that, for $i\in P$, we have $\bv_K\big(\big(t_i^e/\pi_K^{e_i}\big)-r_i(\ul t)\big)\geq a$ and, for $(i,j)\in P^2$, we have $\bv_K\big(\big(t_j^{e_i}/t_i^{e_j}\big)-r_{i,j}(\ul t)\big)\geq a$. Take an $i\in P$. Notice that
\begin{equation*}
z^e_i/\pi_K^{e_i}=\big(\pi_{K'}^m/\pi_K\big)^{e_i}\big(z_\iota^{e'}/\pi_{K'}\big)^{me_i}\big(z_i/z_\iota^{e'_i}\big)^e=u^{e_i}\big(z_\iota^{e'}/\pi_{K'}\big)^{me_i}\big(z_i/z_\iota^{e'_i}\big)^e\in \sO_{L'}.
\end{equation*}
Hence $u^{e_i}g^{me_i}h_i^e\in \sO_{K'}[x_1,\ldots,x_{n'}]$ is another lifting of $z^e_i/\pi_K^{e_i}$. Since $\ul t'$ is contained in $X^{ma}_{Z'}(K^{\alg})$, we have
\begin{equation*}
\bv_{K'}\big(u^{e_i}g^{me_i}(\ul t')h_i^e(\ul t')-r_i(\ul t)\big)\geq ma.
\end{equation*}
Notice that $t_i^e/\pi_K^{e_i}=u^{e_i}\big(t_\iota^{e'}/\pi_{K'}\big)^{me_i}\big(t_i/t_\iota^{e'_i}\big)^e$. To show $\bv_K\big(\big(t_i^e/\pi_K^{e_i}\big)-r_i(\ul t)\big)\geq a$, it is sufficient to check that 
\begin{align*}
\bv_{K'}\left(u^{e_i}\big(t_\iota^{e'}/\pi_{K'}\big)^{me_i}\big(t_i/t_\iota^{e'_i}\big)^e-u^{e_i}g^{me_i}(\ul t')h_i^e(\ul t')\right)\geq ma.
\end{align*}
This is deduced from following inequalities (the definition of $Y^{ma}_{Z',P'}$):
\begin{align*}
&\bv_{K'}\left(\big(t_\iota^{e'}/\pi_{K'}\big)^{me_i}-g^{me_i}(\ul t')\right)\geq \bv_{K'}\left(\big(t_\iota^{e'}/\pi_{K'}\big)-g(\ul t')\right)\geq ma,\\
&\bv_{K'}\left(\big(t_i/t_\iota^{e'_i}\big)^e-h_i^e(\ul t')\right)\geq \bv_{K'}\left(\big(t_i/t_\iota^{e'_i}\big)-h_i(\ul t')\right)\geq ma.
\end{align*}
Take a pair $(i,j)\in P$. Notice that
\begin{equation*}
z_j^{e_i}/z_i^{e_j}=\big(z_j/z_\iota^{e'_j}\big)^{e_i}\big(z_\iota^{e'_i}/z_i\big)^{e_j}.
\end{equation*}
Hence $h_j^{e_i}l_i^{e_j}\in\sO_{K'}[x_1,\ldots,x_{n'}]$ is another lifting of $z_j^{e^i}/z_i^{e_j}$. We have 
\begin{equation*}
\bv_{K'}\big(h_j^{e_i}(\ul t')l_i^{e_j}(\ul t')-r_{i,j}(\ul t)\big)\geq ma.
\end{equation*}
To show $\bv_K\big(\big(t_j^{e_i}/t_i^{e_j}\big)-r_{i,j}(\ul t)\big)\geq a$, it is sufficient to check that 
\begin{equation*}
\bv_K\left(  \big(t_j/t_\iota^{e'_j}\big)^{e_i}  \big(t_\iota^{e'_i}/t_i\big)^{e_j} - h_j^{e_i}(\ul t')l_i^{e_j}(\ul t') \right)\geq a.
\end{equation*}
This is deduced from the following inequalities (the definition of $Y^{ma}_{Z',P'}$):
\begin{align*}
&\bv_{K'}\left(\big(t_j/t_\iota^{e'_j}\big)^{e_i}-h_j^{e_i}(\ul t')\right)\geq \bv_{K'}\left(\big(t_j/t_\iota^{e'_j}\big)-h_j(\ul t')\right)\geq ma,\\
&\bv_{K'}\left( \big(t_\iota^{e'_i}/t_i\big)^{e_j} -    l_i^{e_j}(\ul t')   \right)\geq \bv_{K'}\left( \big(t_\iota^{e'_i}/t_i\big) -    l_i(\ul t')   \right)\geq ma.
\end{align*}
\end{proof}

\begin{lemma}\label{2inclu}
Let $a$ and $b$ be two positive rational numbers.
\begin{itemize}
\item[(1)]
Suppose that, for any finite Galois extension $L/K$ whose ramification is bounded by $a$, the ramification of $LK'/K'$ is bounded by $b$. Then, we have $\gamma(G^{b}_{K'})\subseteq G^a_K$.
\item[(1')]
Suppose that, for any finite Galois extension $L/K$ whose logarithmic ramification is bounded by $a$, the logarithmic ramification of $LK'/K'$ is bounded by $b$. Then, we have $\gamma(G^{b}_{K',\log})\subseteq G^a_{K,\log}$.
\item[(2)]
Suppose that, for any finite Galois extension $L/K$ such that the ramification of $LK'/K'$ is bounded by $b$, the ramification of  $L/K$ is bounded by $a$. Then we have $G^a_K\subseteq \gamma(G^{b}_{K'})$.
\item[(2')]
Suppose that, for any finite Galois extension $L/K$ such that the logarithmic ramification of $LK'/K'$ is bounded by $b$, the logarithmic ramification of  $L/K$ is bounded by $a$. Then we have $G^a_K\subseteq \gamma(G^{b}_{K'})$.
\end{itemize}
\end{lemma}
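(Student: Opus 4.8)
The plan is to reduce each of the four statements to finite Galois levels and then to play off, on one hand, the compatibility of the ramification filtration (and of the logarithmic one) with passage to quotient groups, and on the other hand the isomorphisms of Lemma \ref{purins}. The observation I would use repeatedly is that, by Theorem \ref{thmramfil}(1) and elementary profinite group theory, a closed subgroup $H$ of $G_K$ is contained in $G^a_K$ if and only if its image $q_L(H)$ in $\gal(L/K)$ lies in $\gal(L/K)^a$ for every finite Galois extension $L/K$ in $K^{\sep}$ (here $q_L$ denotes the projection); the same holds for $K'$ and, via Theorem \ref{thmlogramfil}, for the logarithmic filtrations. For a finite Galois $L/K$ I would write $L'=LK'$, which is finite Galois over $K'$ by Lemma \ref{purins}(4), and let $\rho_L\colon\gal(L'/K')\iso\gal(L/K)$ be the restriction isomorphism of that lemma; it satisfies $\rho_L\circ q_{L'}=q_L\circ\gamma$, and via the Galois correspondence it matches the intermediate fields $MK'$ of $L'/K'$ with the intermediate fields $M$ of $L/K$, so that $\gal(L'/MK')=\rho_L^{-1}(\gal(L/M))$. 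Finally, since $\gal(L/K)^a$ is \emph{defined} as the image of $G^a_K$, the ramification filtration is compatible with every quotient $\gal(L/K)\twoheadrightarrow\gal(M/K)$ coming from an intermediate Galois extension $K\subseteq M\subseteq L$; likewise on the $K'$-side and in the logarithmic case.

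For (1) I would fix $L/K$ finite Galois and pass to $M=L^{\gal(L/K)^a}$: compatibility of the filtration with $\gal(L/K)\twoheadrightarrow\gal(M/K)=\gal(L/K)/\gal(L/K)^a$ shows the ramification of $M/K$ is bounded by $a$, so by hypothesis the ramification of $MK'/K'$ is bounded by $b$, i.e. $\gal(MK'/K')^b=\{1\}$. Since $\gal(MK'/K')$ is the quotient of $\gal(L'/K')$ by $\gal(L'/MK')=\rho_L^{-1}(\gal(L/K)^a)$, compatibility again forces $\gal(L'/K')^b\subseteq\rho_L^{-1}(\gal(L/K)^a)$; applying $\rho_L$ and the relation $\rho_L\circ q_{L'}=q_L\circ\gamma$ gives $q_L(\gamma(G^b_{K'}))\subseteq\gal(L/K)^a$ for every $L$, hence $\gamma(G^b_{K'})\subseteq G^a_K$. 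Statement (1') is the verbatim translation with the logarithmic filtrations and Theorem \ref{thmlogramfil}.

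For (2) the extra input is Lemma \ref{purins}(2): every finite Galois extension of $K'$ is $L'=LK'$ with $L$ the separable closure of $K$ in $L'$ and $L/K$ Galois, so these $L'$ are cofinal, and it suffices to check $\rho_L^{-1}(\gal(L/K)^a)\subseteq\gal(L'/K')^b$, equivalently $\gal(L/K)^a\subseteq\rho_L(\gal(L'/K')^b)$, for each finite Galois $L/K$. I would set $N'=L'^{\gal(L'/K')^b}$, note that $\gal(N'/K')^b=\{1\}$ so the ramification of $N'/K'$ is bounded by $b$, let $N$ be the separable closure of $K$ in $N'$ (so $N'=NK'$ by Lemma \ref{purins}(2), $N/K$ is Galois, and $N\subseteq L$ since any element of $N'\subseteq L'$ separable over $K$ lies in $L$), and conclude from the hypothesis that the ramification of $N/K$ is bounded by $a$, i.e. $\gal(N/K)^a=\{1\}$. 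Compatibility of the filtration with $\gal(L/K)\twoheadrightarrow\gal(N/K)=\gal(L/K)/\gal(L/N)$ then gives $\gal(L/K)^a\subseteq\gal(L/N)=\rho_L(\gal(L'/N'))=\rho_L(\gal(L'/K')^b)$, as needed. Statement (2') follows identically with the logarithmic filtrations.

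I expect the only subtle point to be the bookkeeping with the three identifications at play — $\gamma$, the restriction isomorphisms $\rho_L$, and the Galois correspondences on the two sides — and in particular, for (2) and (2'), the \emph{descent} step: one must first carve out the relevant intermediate extension $N'$ on the $K'$-side and then use Lemma \ref{purins}(2) to realise it as $NK'$ for an intermediate extension $N$ of $L/K$ sitting inside $L$, so that $\gal(L/N)$ is genuinely the $\rho_L$-image of $\gal(L'/K')^b$. Once this commutative picture is in place, each of the four inclusions drops out of the compatibility of the relevant ramification filtration with quotient groups; note in passing that this lemma is purely formal and does not use the affinoid description of Lemma \ref{diagaffinoid} (which will instead serve to verify the hypotheses in the applications).
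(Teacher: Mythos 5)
Your proof is correct and takes essentially the same route as the paper's: reduce to finite Galois quotients, pass to the fixed field of the relevant ramification subgroup (your $M=L^{\gal(L/K)^a}$ for (1), and your $N$ for (2), which is the field the paper calls $L_0$), apply the hypothesis to that subextension, and conclude by compatibility of the quotient filtrations with $\gal(L/K)\twoheadrightarrow\gal(M/K)$. The only cosmetic difference is that in (2) you produce the intermediate field from the $K'$-side via the separable closure of $K$ in $N'=L'^{\gal(L'/K')^b}$, whereas the paper defines $L_0=L^{\ol\gamma(\gal(LK'/K')^b)}$ directly on the $K$-side and observes that $L_0K'$ is the fixed field of $\gal(LK'/K')^b$; these are the same field.
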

\begin{proof}
The proofs for the ramification filtration and for the logarithmic ramification filtration are the same. We only verify (1) and (2).

(1) To prove $\gamma(G^{b}_{K'})\subseteq G^a_K$, it is sufficient to show that, for any finite Galois extension $L$ of $K$ contained in $K^{\alg}$, we have $\ol \gamma(\gal(LK'/K')^{b})\subseteq \gal(L/K)^a$, where $\ol\gamma:\gal(LK'/K')\iso \gal(L/K)$ denotes the quotient of $\gamma:G_{K'}\rightarrow G_K$. We denote by $L_0$ the $\gal(L/K)^a$-invariant subfield of $L$. We have 
\begin{equation*}
\gal(L_0/K)^a=(\gal(L/K)/\gal(L/K)^a)^a=\{1\}.
\end{equation*}
By the assumption of (1), we have $\gal(L_0K'/K')^{b}=\{1\}$. Since $\gal(L_0K'/K')^{b}$ is the image of $\gal(LK'/K')^{b}$ in $\gal(L_0K'/K')$, we have $\gal(LK'/K')^{b}\subseteq \gal(LK'/L_0K')$. Hence, we get
\begin{equation*}
\ol\gamma(\gal(LK'/K')^{b})\subseteq \ol\gamma(\gal(LK'/L_0K'))=\gal(L/L_0)=\gal(L_0/K)^a.
\end{equation*}

(2) To prove $G^a_K\subseteq \gamma(G^{b}_{K'})$, it is sufficient to show that, for any finite Galois extension $L$ of $K$ contained in $K^{\alg}$, we have $\gal(L/K)^a\subseteq \ol \gamma(\gal(LK'/K')^{b})$. We denote by $L_0$ the $\ol \gamma(\gal(LK'/K')^{b})$-invariant subfield of $L$. Hence, $L_0K'$ is the $\gal(LK'/K')^{b}$-invariant subfield of $LK'$.
We have
\begin{equation*}
\gal(L_0K'/K')^{b}=(\gal(LK'/K')/\gal(LK'/K')^{b})^{b}=\{1\}.
\end{equation*}
By the assumption of (2), we have $\gal(L_0/K)^a=\{1\}$. Since $\gal(L_0/K)^a=\{1\}$ is the image of $\gal(L/K)^a$ in $\gal(L_0/K)$, we get
\begin{equation*}
\gal(L/K)^a\subseteq \gal(L/L_0)=\ol \gamma(\gal(LK'/K')^{b}).
\end{equation*}
\end{proof}

\begin{proposition}\label{rightinclu}
For any rational number $a\geq 0$, we have 
\begin{equation*}
\gamma(G^{ma}_{K'})\subseteq G^a_K\ \ \ \text{and}\ \ \ \gamma(G^{ma}_{K',\log})\subseteq G^a_{K,\log}.
\end{equation*}
\end{proposition}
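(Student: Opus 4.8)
The plan is to deduce Proposition \ref{rightinclu} by combining the geometric comparison of affinoids in Lemma \ref{diagaffinoid} with the criterion of Lemma \ref{2inclu}(1) and (1'). The strategy is that Lemma \ref{2inclu} reduces the inclusion $\gamma(G^{ma}_{K'})\subseteq G^a_K$ (resp.\ its logarithmic analogue) to a purely field-theoretic boundedness statement: for every finite Galois extension $L/K$ whose ramification (resp.\ logarithmic ramification) is bounded by $a$, the ramification (resp.\ logarithmic ramification) of $L'=LK'/K'$ is bounded by $ma$. Here $L'/K'$ is Galois with $\mathrm{Gal}(L'/K')\iso\mathrm{Gal}(L/K)$ by Lemma \ref{purins}(4), so $L'$ is indeed an object to which the ramification filtration applies.

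\textbf{Step 1.} Fix such an $L/K$ and choose generators and logarithmic systems as in the setup of Lemma \ref{diagaffinoid}: a set $Z=\{z_1,\dots,z_n\}$ generating $\sO_L/\sO_K$, extended to $Z'=\{z_1,\dots,z_{n'}\}$ generating $\sO_{L'}/\sO_{K'}$, and logarithmic systems $(Z,P)$, $(Z',P')$ with $P\subseteq P'$. Lemma \ref{diagaffinoid} then furnishes, for each rational $a\geq 0$, morphisms of affinoids $\mathrm{pr}_X\colon X^{ma}_{Z'}\to X^a_Z\times_K K'$ and $\mathrm{pr}_Y\colon Y^{ma}_{Z',P'}\to Y^a_{Z,P}\times_K K'$ fitting into the commutative squares \eqref{keydiag} with the canonical injections $\lambda,\lambda',\mu,\mu'$.

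\textbf{Step 2.} I pass to geometric connected components. The vertical restriction map $\cdot|_L\colon\Hom_{K'}(L',K^{\alg})\to\Hom_K(L,K^{\alg})$ is a bijection because $L'=K'\otimes_K L$ (Lemma \ref{purins}(1)), so it identifies $\cF_{K'}(L')$ with $\cF_K(L)$ compatibly with the $G_{K'}\iso G_K$-actions. On the top rows, $\ol{\lambda'}$ and $\ol{\mu'}$ surject onto $\cF^{ma}_{K'}(L')=\pi_0^{\mathrm{geom}}(X^{ma}_{Z'})$ and $\cF^{ma}_{K',\log}(L')=\pi_0^{\mathrm{geom}}(Y^{ma}_{Z',P'})$; on the bottom rows $\ol\lambda,\ol\mu$ surject onto $\cF^a_K(L)$, $\cF^a_{K,\log}(L)$ (after base change to $K'$, which does not change geometric connected components). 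The assumption that the ramification of $L/K$ is bounded by $a$ means $\ol\lambda\colon\cF_K(L)\to\cF^a_K(L)$ is bijective; chasing the left square of \eqref{keydiag}, the image of $\ol{\lambda'}$ separates points of $\cF_{K'}(L')$ as finely as $\ol\lambda$ does, i.e.\ $\ol{\lambda'}\colon\cF_{K'}(L')\to\cF^{ma}_{K'}(L')$ is injective, hence bijective; the same argument with the right square and $\ol\mu,\ol{\mu'}$ handles the logarithmic case. This is exactly the boundedness of the ramification (resp.\ logarithmic ramification) of $L'/K'$ by $ma$.

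\textbf{Step 3.} Feeding these two boundedness statements into Lemma \ref{2inclu}(1) and (1') yields $\gamma(G^{ma}_{K'})\subseteq G^a_K$ and $\gamma(G^{ma}_{K',\log})\subseteq G^a_{K,\log}$, which is the assertion. The case $a=0$ is trivial since $G^0_{K'}=G_{K'}$, $G^0_K=G_K$ and $\gamma$ is an isomorphism (and likewise for the logarithmic filtration, whose degree-$0$ term is the inertia subgroup, preserved by $\gamma$). \textbf{The main obstacle} I anticipate is the bookkeeping in Step 2: one must argue carefully that a morphism of affinoids $X^{ma}_{Z'}\to X^a_Z\times_K K'$ lying over the point-level map induces, on geometric $\pi_0$, a map under which fibres of $\ol{\lambda'}$ cannot be strictly coarser than fibres of $\ol\lambda$ — i.e.\ that "bounded by $a$ downstairs'' genuinely forces "bounded by $ma$ upstairs'' rather than merely the reverse. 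This is where the direction of the affinoid morphism and the compatibility in \eqref{keydiag} must be used with care; the actual affinoid-geometry inequalities have already been discharged inside the proof of Lemma \ref{diagaffinoid}, so nothing further is needed there.
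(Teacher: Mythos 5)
Your proposal is correct and follows essentially the same route as the paper: Lemma \ref{diagaffinoid} gives the commutative square \eqref{keydiag}, the diagram chase on geometric connected components (with $\cdot|_L$ a bijection and $\ol\lambda'$, $\ol\mu'$ a priori surjective) shows that boundedness by $a$ for $L/K$ forces boundedness by $ma$ for $L'/K'$, and Lemma \ref{2inclu}(1) and (1') then yield the two inclusions. The "main obstacle" you flag in Step 2 is resolved exactly as the paper does it, by composing $\ol\pr_X\circ\ol\lambda'=\ol\lambda\circ(\cdot|_L)$ and noting injectivity of the composite.
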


\begin{proof}
  We fix a rational number $a\geq 0$. Let $L$ be a finite Galois extension of $K$ contained in $K^{\sep}$ and we take the notation of Lemma \ref{diagaffinoid}. By \eqref{keydiag}, we have the following commutative diagram
\begin{equation*}
\xymatrix{\relax
\cF_{K'}(L')\ar[r]^-(0.5){\ol\lambda'}\ar[d]_{\cdot|_L}&\cF^{ma}_{K'}(L')\ar[d]^{\ol\pr_X}\\
\cF_K(L)\ar[r]_-(0.5){\ol\lambda}&\cF^a_K(L)}
\end{equation*}
where $\cdot|_L$ is a bijection, $\ol\lambda$ and $\ol\lambda'$ are surjections induced by $\lambda$ and $\lambda'$ \eqref{FtoFa} and $\ol\pr_X$ is induced by $\pr_X$ \eqref{keydiag}. By chasing the diagram, $\ol\lambda'$ is a bijection if $\ol\lambda$ is a bijection. By Theorem \ref{thmramfil}, $\ol\lambda$ (resp. $\ol\lambda'$) is an bijection if and only if $\gal(L/K)^a=\{1\}$ (resp. $\gal(L'/K')^{ma}=\{1\}$). Hence, $\gal(L/K)^a=\{1\}$  implies $\gal(L'/K')^{ma}=\{1\}$. By Lemma \ref{2inclu} (1), we get $\gamma(G^{ma}_{K'})\subseteq G^a_K$.

The proof for the logarithmic ramification filtration is the same as above. 
\end{proof}

\begin{lemma}\label{frob fileq}
Let $n>0$ be an integer and we denote by $\gamma_n:G_{K^{p^{-n}}}\iso G_K$ the isomorphism of Galois groups induced by the canonical inclusion $\iota_n:K\rightarrow K^{p^{-n}}$ (Proposition \ref{purins}). Then, for any rational number $a\geq 0$, we have 
\begin{equation*}
\gamma_n\big(G^a_{K^{p^{-n}}}\big)=G^a_K\ \ \ \text{and}\ \ \ \gamma_n\big(G^a_{K^{p^{-n}},\log}\big)=G^a_{K,\log}.
\end{equation*}
\end{lemma}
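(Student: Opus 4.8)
The plan is to realize $\gamma_n$ as the isomorphism of Galois groups induced by an \emph{isometric} isomorphism of complete discrete valuation fields $K\iso K^{p^{-n}}$, and then to transport the Abbes--Saito filtrations along it. Proposition \ref{rightinclu} by itself will not suffice: applied to $K'=K^{p^{-n}}$ (so that $m=p^n$) it only gives the shifted inclusion $\gamma_n(G^{p^n a}_{K^{p^{-n}}})\subseteq G^a_K$, whereas we want equality with no shift. The extra input that removes the shift will be Lemma \ref{fr id}.

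First I would set up the field maps. Since $\ch K=p$, the $p^n$-th power map $\phi\colon K^{p^{-n}}\to K$, $x\mapsto x^{p^n}$, is a ring homomorphism; it is bijective, since for $y\in K$ the element $y^{1/p^n}\in K^{\alg}$ lies in $K^{p^{-n}}$ by definition and is its unique preimage. The ramification index of $K^{p^{-n}}/K$ is $p^n$, i.e. $\bv_{K^{p^{-n}}}=p^n\bv_K$, so $\bv_K(\phi(x))=p^n\bv_K(x)=\bv_{K^{p^{-n}}}(x)$ and $\phi$ is an isometry for the normalized valuations. As $K$ is complete, it follows that $K^{p^{-n}}$ is again a complete discrete valuation field, and that $\phi$ together with its inverse $\phi^{-1}\colon K\iso K^{p^{-n}}$, $y\mapsto y^{1/p^n}$, are isomorphisms of complete discrete valuation fields.

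Next I would identify $\gamma_n$ with the isomorphism of Galois groups induced by $\phi^{-1}$. Let $F\colon K\to K$, $x\mapsto x^p$, be the Frobenius; one checks that $\iota_n=\phi^{-1}\circ F^n$ as homomorphisms $K\to K^{p^{-n}}$. Since forming the absolute Galois group is contravariant on field homomorphisms, $\gamma_n$ is the composite of the isomorphism $G_{K^{p^{-n}}}\iso G_K$ induced by $\phi^{-1}$ with the endomorphism of $G_K$ induced by $F^n$; by Lemma \ref{fr id}, applied $n$ times, this endomorphism is the identity. Hence $\gamma_n$ coincides with the isomorphism $G_{K^{p^{-n}}}\iso G_K$ induced by the isometric isomorphism $\phi^{-1}\colon K\iso K^{p^{-n}}$.

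To conclude I would use the functoriality of the Abbes--Saito construction for isometric isomorphisms of complete discrete valuation fields. Such an isomorphism $\theta\colon K\iso K^{p^{-n}}$ identifies $\FE_{/K}$ with $\FE_{/K^{p^{-n}}}$ and, for every finite separable extension and every presentation, identifies the affinoids $X^a_Z$ (resp. $Y^a_{Z,P}$) over $K$ with the corresponding affinoids over $K^{p^{-n}}$; here it is essential that $\theta$ be an isometry, so that the defining valuation inequalities are imposed at the same level $a$. Hence $\theta$ induces isomorphisms of Galois-equivariant functors $\cF^a_K\iso\cF^a_{K^{p^{-n}}}$ and $\cF^a_{K,\log}\iso\cF^a_{K^{p^{-n}},\log}$, and, by the characterizations in Theorems \ref{thmramfil} and \ref{thmlogramfil}, the induced isomorphism of Galois groups carries $G^a_K$ onto $G^a_{K^{p^{-n}}}$ and $G^a_{K,\log}$ onto $G^a_{K^{p^{-n}},\log}$ for every rational $a\geq 0$. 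Taking $\theta=\phi^{-1}$ and combining with the previous paragraph yields the two asserted equalities. The one step needing a little care is the bookkeeping that identifies the inclusion $\iota_n$ with $\phi^{-1}\circ F^n$: this is precisely what allows Lemma \ref{fr id} to cancel the Frobenius and turn the shifted inclusion of Proposition \ref{rightinclu} into an equality; the rest is transport of structure.
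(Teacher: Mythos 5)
Your proposal is correct and follows essentially the same route as the paper: both factor the inclusion $\iota_n$ through the $p^n$-th power isomorphism $\delta_n\colon K^{p^{-n}}\to K$, use that an isomorphism of valued fields transports the Abbes--Saito filtrations without shift, and invoke Lemma \ref{fr id} to cancel the resulting Frobenius. Your write-up merely makes explicit the isometry bookkeeping and the transport of the affinoids $X^a_Z$ and $Y^a_{Z,P}$, which the paper leaves implicit in the sentence ``since $\delta_n$ is an isomorphism, we have $G^a_K=\psi_n^{-1}(G^a_{K^{p^{-n}}})$.''
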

\begin{proof}
Fix  a rational number $a\geq 0$. We put $\delta_n:K^{p^{-n}}\rightarrow K, x\mapsto x^{p^n}$ and denote by $\psi_n:G_K\iso G_{K^{p^{-n}}}$ the induced isomorphism. Since $\delta_n$ is an isomorphism, we have 
$G^a_K=\psi_n^{-1}(G^a_{K^{p^{-n}}})$. Since the composition $\delta_n$ and $\iota_n$ is the $n$-th power of the Frobenius map, 
 the composition $\gamma_n\psi_n:G_K\iso G_K$ is the identity (Lemma \ref{fr id}). Hence, for any rational number $a>0$, we have 
 \begin{equation*}
 \gamma_n\big(G_{K^{p^{-n}}}^a\big)=(\gamma_n\psi_n)\big(\psi_n^{-1}\big(G_{K^{p^{-n}}}^a\big)\big)=G^a_K.
 \end{equation*}
  The proof for the logarithmic ramification subgroups is the same as above.
\end{proof}

\begin{remark}
When the residue field $F$ of $\sO_K$ is perfect, $K$ is isomorphic to the field of Laurent power series $F((x))$. For any finite purely inseparable extension $K'$ over $K$, we have a canonical isomorphism $K'\iso F((x^{p^{-n}}))=K^{p^{-n}}$, where $n=\log_p[K':K]$. Lemma \ref{frob fileq} implies that $\gamma(G^a_{K',\cl})=G^a_{K,\cl}$ for rational number $a\geq 0$, where $\{G^a_{K,\cl}\}_{a\in \mathbb Q_{\geq 0}}$ denotes the classical upper numbering filtration of $G_K$.  This is a proof of the equality $\gamma(G^a_{K',\cl})=G^a_{K,\cl}$ mentioned in \ref{purinsup} without involving lower numbering filtration.
\end{remark}

\begin{theorem}\label{thright}
Assume that $K'/K$ has strict ramification index $s$ (\ref{eri}). Then, for any rational number $a\geq 0$, we have 
\begin{equation}\label{righteri}
\gamma\big(G^{sa}_{K'}\big)\subseteq G^a_K\ \ \ \text{and}\ \ \ \gamma\big(G^{sa}_{K',\log}\big)\subseteq G^a_{K,\log}.
\end{equation}
\end{theorem}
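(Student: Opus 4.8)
The plan is to split the purely inseparable extension $K'/K$ through the intermediate field $K^{p^{-t}}$, where $t\geq 0$ is the integer of \ref{eri}, determined by $K^{p^{-t}}\subseteq K'$ and $K^{p^{-t-1}}\not\subseteq K'$; by definition $s$ is then the ramification index of $K'/K^{p^{-t}}$. I would apply Proposition \ref{rightinclu} to the top layer $K'/K^{p^{-t}}$ and Lemma \ref{frob fileq} to the bottom layer $K^{p^{-t}}/K$, and compose. (If $t=0$ then $K^{p^{-t}}=K$ and the statement is exactly Proposition \ref{rightinclu}, so I may assume $t\geq 1$.)

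First I would record that $\sO_{K^{p^{-t}}}$ is again a complete discrete valuation ring: the $p^t$-power map $K^{p^{-t}}\to K$, $x\mapsto x^{p^t}$, is a field isomorphism multiplying $\bv_K$ by $p^t$, hence a homeomorphism for the valuation topologies, so completeness of $\sO_K$ carries over. Consequently $K'$ is a complete discrete valuation field, purely inseparable over $K^{p^{-t}}$, contained in $(K^{p^{-t}})^{\alg}=K^{\alg}$, with ramification index $s$ over $K^{p^{-t}}$; Proposition \ref{rightinclu} applied to this extension (with its $m$ equal to $s$) gives
\begin{equation*}
\gamma'\big(G^{sa}_{K'}\big)\subseteq G^a_{K^{p^{-t}}}\qquad\text{and}\qquad \gamma'\big(G^{sa}_{K',\log}\big)\subseteq G^a_{K^{p^{-t}},\log},
\end{equation*}
where $\gamma':G_{K'}\iso G_{K^{p^{-t}}}$ is induced by $K^{p^{-t}}\subseteq K'$. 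Next, Lemma \ref{frob fileq} with $n=t$ gives $\gamma_t\big(G^a_{K^{p^{-t}}}\big)=G^a_K$ and $\gamma_t\big(G^a_{K^{p^{-t}},\log}\big)=G^a_{K,\log}$, where $\gamma_t:G_{K^{p^{-t}}}\iso G_K$ is induced by $K\subseteq K^{p^{-t}}$. Since the inclusion $K\subseteq K'$ factors as $K\subseteq K^{p^{-t}}\subseteq K'$, we have $\gamma=\gamma_t\circ\gamma'$, whence
\begin{equation*}
\gamma\big(G^{sa}_{K'}\big)=\gamma_t\big(\gamma'\big(G^{sa}_{K'}\big)\big)\subseteq \gamma_t\big(G^a_{K^{p^{-t}}}\big)=G^a_K,
\end{equation*}
and the same computation with the logarithmic subgroups yields $\gamma\big(G^{sa}_{K',\log}\big)\subseteq G^a_{K,\log}$; this is \eqref{righteri}.

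I do not expect a genuine obstacle: the argument is a two-step reduction to facts already established. The only thing needing care is the bookkeeping of the shift — one must see that the factor contributed by Proposition \ref{rightinclu} on the layer $K'/K^{p^{-t}}$ is exactly $s$ (this is the very definition of the strict ramification index), while the layer $K^{p^{-t}}/K$ contributes no shift at all, which is the content of Lemma \ref{frob fileq}, ultimately of Lemma \ref{fr id} (that Frobenius induces the identity on absolute Galois groups). Note that the reasoning is word-for-word the same for the non-logarithmic and the logarithmic filtrations, since both Proposition \ref{rightinclu} and Lemma \ref{frob fileq} supply the two variants simultaneously.
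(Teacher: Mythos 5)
Your proof is correct and follows exactly the paper's own argument: factor $K'/K$ through $K^{p^{-t}}$, apply Proposition \ref{rightinclu} to the layer $K'/K^{p^{-t}}$ (where the ramification index is $s$ by definition of the strict ramification index) and Lemma \ref{frob fileq} to $K^{p^{-t}}/K$, then compose. The extra checks you record (completeness of $\sO_{K^{p^{-t}}}$, the factorization $\gamma=\gamma_t\circ\gamma'$) are details the paper leaves implicit.
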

\begin{proof}
Let $t\geq 0$ be an integer number such that $K^{p^{-t}}\subseteq K'$ and $K^{p^{-t-1}}\not\subseteq K'$. Then, $s$ is the ramification index of $K'/K^{p^{-t}}$. Applying Proposition \ref{rightinclu} to $ K^{p^{-t}}\subseteq K'$ and Lemma \ref{frob fileq} to $K\subseteq K^{p^{-t}}$, we obtain \eqref{righteri}.
\end{proof}

\begin{theorem}\label{thleft}
Assume that $K'/K$ has finite exponent and let $n$ be the dual ramification index of $K'/K$ (\ref{eri}). Then, for any rational number $a\geq 0$, we have 
\begin{equation*}
G^{na}_K\subseteq \gamma\big(G_{K'}^a\big)\ \ \ \textrm{and}\ \ \ G^{na}_{K,\log}\subseteq \gamma\big(G_{K',\log}^a\big).
\end{equation*}
\end{theorem}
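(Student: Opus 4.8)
The plan is to reduce Theorem \ref{thleft} to Theorem \ref{thright} by passing to the ``dual'' tower $K\subseteq K'\subseteq K^{p^{-n}}$, where $n$ is the exponent of $K'/K$. First I would observe that $K^{p^{-n}}/K'$ is itself a purely inseparable extension of complete discrete valuation fields, and since $K^{p^{-n}}/K$ has strict ramification index equal to its full ramification index $e\cdot(\text{dual index})$ relative to... more precisely, I would set up the following: let $e$ be the ramification index of $K'/K$ and let $f$ be the dual ramification index, i.e.\ the ramification index of $K^{p^{-n}}/K'$. Note $ef = $ the ramification index of $K^{p^{-n}}/K$, which by Lemma \ref{frob fileq} acts trivially on the filtration (the extension $K^{p^{-n}}/K$ preserves the ramification filtration up to no shift at all since $\gamma_n$ identifies them). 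Wait---that is the key tension to exploit: $K^{p^{-n}}/K$ has ramification index $p^n$ in the sense of $\bv$, but Lemma \ref{frob fileq} says the induced map on Galois groups carries $G^a$ to $G^a$ with \emph{no} shift.

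So the argument runs as follows. Apply Theorem \ref{thright} to the extension $K^{p^{-n}}/K'$: its strict ramification index is some $s'$, but since $K'$ already contains $K^{p^{-t}}$ for the maximal such $t$ and $K^{p^{-n}}\supseteq K'$, the relevant strict ramification index of $K^{p^{-n}}/K'$ equals the full ramification index $f$ of $K^{p^{-n}}/K'$ (because $K'$ need not contain any $K'^{\,p^{-1}}$-type intermediate field coming from the $K^{p^{-n}}$ side in a way that reduces $s'$; this needs a short check using that $K^{p^{-t-1}}\not\subseteq K'$). This gives $\gamma'\big(G^{fa}_{K^{p^{-n}}}\big)\subseteq G^a_{K'}$ and likewise in the logarithmic case, where $\gamma'\colon G_{K^{p^{-n}}}\iso G_{K'}$ is induced by $K'\subseteq K^{p^{-n}}$. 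Next, compose with Lemma \ref{frob fileq}, which gives $\gamma_n\big(G^{b}_{K^{p^{-n}}}\big)=G^b_K$ for all $b$; taking $b=fa$ and comparing the two isomorphisms via the commutativity $\gamma\circ\gamma' = \gamma_n$ (both are induced by inclusions, and the triangle $K\subseteq K'\subseteq K^{p^{-n}}$ commutes), we obtain
\begin{equation*}
G^{fa}_K=\gamma_n\big(G^{fa}_{K^{p^{-n}}}\big)=\gamma\big(\gamma'\big(G^{fa}_{K^{p^{-n}}}\big)\big)\subseteq \gamma\big(G^a_{K'}\big),
\end{equation*}
which is exactly the claimed inclusion with $n$ in the statement replaced by $f$; and indeed $f$ is by definition the dual ramification index, so this is the theorem. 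The logarithmic case is identical, replacing $G^\bullet$ by $G^\bullet_{\log}$ throughout and using the logarithmic halves of Theorem \ref{thright} and Lemma \ref{frob fileq}.

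The main obstacle I anticipate is the bookkeeping identifying the strict ramification index of $K^{p^{-n}}/K'$ with the dual ramification index $f$ of $K'/K$. By definition, the strict ramification index of $K^{p^{-n}}/K'$ is the ramification index of $K^{p^{-n}}/(K')^{p^{-t'}}$ where $t'$ is maximal with $(K')^{p^{-t'}}\subseteq K^{p^{-n}}$; I need to argue $t'=0$, i.e.\ that $K'$ has no nontrivial $p$-power root inside $K^{p^{-n}}$ beyond itself---equivalently that $(K')^{p^{-1}}\not\subseteq K^{p^{-n}}$. Since $(K')^{p^{-1}}\supseteq (K^{p^{-t}})^{p^{-1}}=K^{p^{-t-1}}$, if $(K')^{p^{-1}}\subseteq K^{p^{-n}}$ then in particular $K^{p^{-t-1}}\subseteq K^{p^{-n}}$, which holds for $n\geq t+1$ and gives no contradiction---so the naive argument fails and I instead need to track the exponent: $K'/K$ has exponent $n$ means $K'\subseteq K^{p^{-n}}$ but $K'\not\subseteq K^{p^{-n+1}}$, and from $(K')^{p^{-1}}\not\subseteq K^{p^{-n}}$ would follow since otherwise $K' \subseteq (K^{p^{-n}})^{p} = K^{p^{-n+1}}$, contradicting the exponent. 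Hence $t'=0$ and the strict ramification index of $K^{p^{-n}}/K'$ is its full ramification index, which is $f$ by definition of the dual ramification index. This makes the reduction go through cleanly; everything else is a formal composition of the two previously established results.
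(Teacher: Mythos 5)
Your proof is correct and is essentially the paper's own argument: both decompose the problem via the tower $K\subseteq K'\subseteq K^{p^{-t}}$ (with $t$ the exponent of $K'/K$), apply the forward inclusion to the purely inseparable extension $K^{p^{-t}}/K'$, and conclude using Lemma \ref{frob fileq} together with the compatibility $\gamma_t=\gamma\circ\gamma'$ of the inclusion-induced isomorphisms. The only difference is that the paper invokes Proposition \ref{rightinclu} directly, which needs only the ordinary ramification index of $K^{p^{-t}}/K'$, so your bookkeeping identifying the strict ramification index of $K^{p^{-t}}/K'$ with its full ramification index---while correct---is avoidable (and in any case the inequality $s'\leq f$ together with the decrease of the filtration would already suffice).
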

\begin{proof}
 We denote by $\theta:G_{K^{p^{-t}}}\iso G_{K'}$ the isomorphism induced by the inclusion $K'\subseteq K^{p^{-t}}$, where $t$ is the exponent of $K'/K$. The ramification index of $K^{p^{-t}}/K'$ is $n$. Fix a rational number $a\geq 0$.
  Applying Proposition \ref{rightinclu} to $\theta$, we obtain $ \theta\big(G_{K^{p^{-t}}}^{na}\big)\subseteq G^a_{K'}$.
Let $\gamma_n:G_{K^{p^{-n}}}\iso G_K$ be the isomorphism induced by the inclusion $K\subseteq K^{p^{-n}}$ as in Lemma \ref{frob fileq}.  Notice that $\gamma_n:G_{K^{p^{-n}}}\rightarrow G_K$ is the composition of $\theta$ and $\gamma$.  By Lemma \ref{frob fileq}, we have
\begin{equation*}
G^{na}_K=(\gamma\theta)\big(G^{na}_{K^{p^{-t}}}\big)\subseteq \gamma\big(G^a_{K'}\big).
\end{equation*}
 The proof for the logarithmic ramification subgroups is the same.
\end{proof}

\begin{remark}
Keep the assumption of Theorem \ref{thleft}. Assume that $f=[K':K]<+\infty$.  The dual ramification index of $K'/K$ is less than $f$. Hence, by Theorem \ref{thleft}, we have 
\begin{equation}\label{extdegleft}
G^{fa}_K\subseteq \gamma\big(G_{K'}^a\big)\ \ \ \textrm{and}\ \ \ G^{fa}_{K,\log}\subseteq \gamma\big(G_{K',\log}^a\big).
\end{equation}
  Let $L$ be a finite separable extension of $K$ contained in $K^{\sep}$ of ramification index $m$. We have a canonical inclusion $G_L\subseteq G_K$. By \cite[Proposition 3.7 and 3.15]{as i}, we have $G^{ma}_L\subseteq G^a_K$ and $G^{ma}_{L,\log}\subseteq G^a_{K,\log}$ for any rational number $a\geq 0$, which are similar to Theorem \ref{thright}. However, we do not have inclusions \eqref{extdegleft} for $L/K$, since the number $[L:K]$ cannot bound the ramification and the logarithmic ramification of $L/K$.
\end{remark}

\subsection{}\label{exnotation}
In the following of this section, let $k$ be an algebraically closed field of characteristic $p\geq 3$, $X=\spec(k[x,y])$ a $2$-dimensional affine space over $k$, $D$ the Cartier divisor $y=0$ of $X$, $\xi$ the generic point of $D$ and $K=\mathrm{Frac}(\widehat\sO_{X,\xi})$. We have $K=k(x)((y))$.

Let 
\begin{equation*}
\widetilde U=\spec(\sO_U[t]/(t^p-t-x/y^{p^{n+1}})),\ \ \ (n\geq 1),
\end{equation*}
 be an Artin-Schreier cover of $U=X-D$ and 
$L/K$ the finite Galois extension associate to the cover $\widetilde U/U$. We have $\gal(\widetilde U/U)=\gal(L/K)\cong \bZ/p\bZ$. Let $\Lambda$ be a finite field of characteristic $\ell\neq p$ and $\sF$ a locally constant and constructible \'etale sheaf of $\Lambda$-modules of rank $1$ associated to a non-trivial character $\psi_0:\gal(\widetilde U/U)\rightarrow \Lambda^{\times}$.

\begin{example}\label{Kn}
 Let 
\begin{equation*}
X_n=\spec(\sO_X[u]/(u^{p^n}-x))=\spec(k[u,y])
\end{equation*}
 be a radicial cover of $X$ and $K_n=K[u]/(u^{p^n}-x)$ the purely inseparable extension of $K$ associated to the cover $X_n/X$. We put $L_n=LK_n$ and we have 
 \begin{equation*}
 L_n=K_n[t]/(t^p-t-u^{p^n}/y^{p^{n+1}})= K_1[t']/(t'^p-t'-u/y^p),
 \end{equation*}
 where $t'=t-\sum^{n-1}_{i=0} (u/y^p)^{p^i}$. We denote by $\sF_n$ the pull back of $\sF$ on $U_n=X_n\times_XU$. The total dimension of $\sF|_K$ (resp. $\sF_n|_{K_n}$) along the divisor $D$ (resp. $D_n=D\times_XX_n$) equals the conductor of $L/K$ (resp. $L_n/K_n$) and the Swan conductor of $\sF|_{K}$ (resp. $\sF_n|_{K_n}$)  equals the logarithmic conductor of $L/K$ (resp. $L_n/K_n$). On the other hand, by \cite[Corollary 3.9.1]{wr} and \cite[Theorem 5.1]{barr}, the total dimensions and the Swan conductors of $\sF|_K$ (resp. $\sF_n|_{K_n}$) can be computed by pull-back to curves. Then, we obtain that 
  \begin{align}\label{dimtotswF}
  c(L/K)=c_{\log}(L/K)=p^{n+1},\ \ \ c(L_n/K_n)=c_{\log}(L_n/K_n)=p,
  \end{align}
where $c$ and $c_{\log}$ denotes the conductor and the logarithmic conductor, respectively. Notice that $[K_n:K]=p^n$ and that $K_n/K$  has exponent $n$ and ramification index $1$. We claim that there is no $\epsilon>0$ such that, for any rational number $a\geq 0$, $G_{K}^{(p^n-\epsilon)a}\subseteq \gamma(G_{K_n}^a)$ and $G_{K,\log}^{(p^n-\epsilon)a}\subseteq \gamma(G_{K_n,\log}^{a})$. Indeed, taking $a=p+\epsilon/p^{n-1}$, we get 
\begin{align*}
\gal(L/K)^{p^{n+1}-\epsilon^2/p^{n-1}}&=\gal(L/K)^{p^{n+1}-\epsilon^2/p^{n-1}}_{\log}\cong \mathbb Z/p\mathbb Z,\\
\gal(L_n/K_n)^{p+\epsilon/p^{n-1}}&=\gal(L_n/K_n)^{p+\epsilon/p^{n-1}}_{\log}\cong \{1\},
\end{align*}
 which imply $G_{K}^{p^{n+1}-\epsilon^2/p^{n-1}}\not\subseteq \gamma(G_{K_n}^{p+\epsilon/p^{n-1}})$ and $G_{K,\log}^{p^{n+1}-\epsilon^2/p^{n-1}}\not\subseteq \gamma(G_{K_n,\log}^{p+\epsilon/p^{n-1}})$. 
The assertion implies that Theorem \ref{thleft} is optimal in general.
\end{example}

\begin{example}\label{exsamec}
We take $n=0$ in \ref{exnotation} and  let 
\begin{equation*}
X'=\spec(\sO_X[v]/(v^p-y))=\spec(k[x,v])
\end{equation*}
 be a radicial cover of $X$ and  $K'=K[v]/(v^p-y)$ the purely inseparable extension of $K$ associated to $X'/X$. We put $L'=LK'$ and we have 
 \begin{equation*}
L'=K'[t]/(t^p-t-x/v^{p^2}),
 \end{equation*}
 We denote by $\sF'$ the pull back of $\sF$ on  $U'=U\times_XX'$. The total dimension of $\sF'|_{K'}$ along the divisor $D'=D\times_XX'$ equals the conductor of $L'/K'$ and the Swan conductor of $\sF'|_{K'}$  equals the logarithmic conductor of $L'/K'$. By \cite[Corollary 3.9.1]{wr} and \cite[Theorem 5.1]{barr} again, we compute the total dimension and the Swan conductors of $\sF'|_{K'}$ by pull-back to curves. We obtain that 
  \begin{align*}
c(L/K)=c_{\log}(L/K)=p,\ \   c(L'/K')=c_{\log}(L'/K')=p^2.
  \end{align*}
Notice that the extension degrees of $K'/K$ is $p$ and that $K'/K$ has exponent $1$ and ramification index $p$.  We claim that there is no $\epsilon>0$, such that, for any rational number $a\geq 0$, $G_{K'}^{(p-\epsilon)a}\subseteq G^a_K$ and $G_{K',\log}^{(p-\epsilon)a}\subseteq G^a_{K,\log}$. Indeed, taking $a=p+\epsilon$, we have 
\begin{equation*}
\gal(L'/K')^{p^2-\epsilon^2}=\gal(L'/K')^{p^2-\epsilon^2}_{\log}\cong\mathbb Z/p\bZ\ \ \ \textrm{and}\ \ \ \gal(L/K)^{p+\epsilon}=\gal(L/K)^{p+\epsilon}_{\log}\cong\mathbb \{1\},
\end{equation*}
which imply $\gamma(G_{K'}^{p^2-\epsilon^2})\not\subseteq G_{K}^{p+\epsilon}$ and $\gamma(G_{K',\log}^{p^2-\epsilon^2})\not\subseteq G_{K,\log}^{p+\epsilon}$. This assertion implies that Theorem \ref{thright} is optimal in general.
\end{example}

\begin{remark}
Teyssier proposed a question \cite[Question 1]{tey} as follows.  Let $g:Y\rightarrow Z$ be a $k$-morphism of irreducible $k$-schemes of finite type and $\sG$ a bounded complex of constructible \'etale sheaves of $\Lambda$-modules on $Y$ with $\mathrm{Supp}(\sG)=Y$. Can we bound the wild ramification of $\mathrm{R}g_*\sG$ in terms of the wild ramification of $\sG$ and the wild ramification of $g$ (precisely, the ramification of the sheaf $\mathrm{R}g_*\Lambda$)?  

In fact, when $\dim_kZ\geq 2$ and $g:Y\rightarrow Z$ a finite radicial cover, the degree of the purely inseparable extension should also be considered in bounding the wild ramification of $\mathrm R g_*\sG$. Here is an example. For an \'etale sheaf on smooth $k$-variety, Saito constructed the characteristic cycle which is a cycle on the cotangent bundle of the variety \cite{cc}. The characteristic cycle is the finest ramification invariant for an \'etale sheaf on a variety as far as we known. 
We take assumptions in Example \ref{Kn}. Let $h:X_n\rightarrow X$ be the canonical projection and $\jmath:U_n\rightarrow X_n$  and $j:U\rightarrow X$ the canonical injections.  Since $h:X_n\rightarrow X$ is finite, surjective and radicial, it gives an equivalence of categories of \'etale sites of $X_n$ and $X$. We have
\begin{equation*}
\mathrm R h_*\jmath_!\sF_n=h_*\jmath_!\sF_n =j_!\sF\ \ \ \textrm{and}\ \ \ \mathrm R h_*\jmath_!\Lambda=h_*\jmath_!\Lambda =j_!\Lambda
\end{equation*}
By \cite[Theorem 7.14]{cc}, for a sheaf of rank $1$ and its ramification is non-degenerate along the ramified divisor, the coefficient of the non-trivial part of the characteristic cycle is contributed by the total dimension of the sheaf along the divisor. 
Hence, by \eqref{dimtotswF}, we have
\begin{align*}
CC(j_!\sF)&=-[\mathbb T^*_XX]-p^{n+1}[D\cdot\langle dx\rangle],\\
CC(j_!\Lambda)&=-[\mathbb T^*_{X_n}X_n]-[D\cdot\langle dz\rangle],\\
CC(\jmath_!\sF_n)&=-[\mathbb T^*_{X_n}X_n]-p[D_n\cdot \langle du\rangle].
\end{align*}
We see that coefficients of $CC(j_!\sF)$ rely on not only those of  $CC(j_!\Lambda)$ and $CC(\jmath_!\sF_n)$ but also the extension degree of $h:X_n\rightarrow X$. This computation can also be derived from \cite[Theorem 4.9]{ccpullback}.
\end{remark}

\section{Geometric perlimilaries}
\subsection{}\label{dilatation}
Let $i:Z\rightarrow X$ be a closed immersion of schemes, $D$ an effective Cartier divisor of $X$ and $X'$ the blow-up of $X$ along $D_Z=D\times_XZ$. We denote by $X^{(D_Z)}$ the complement of the proper transform of $D$ in $X'$ and we call it the {\it dilatation} of $(X,Z)$ along $D$. We have a canonical projection $g:X^{(D_Z)}\rightarrow X$. The pull-back $\wt D$ of $D$ in $X^{(D_Z)}$ is a Cartier divisor of $X^{(D_Z)}$ with the image $D_Z$ in $X$.  If $X=\spec(A)$ is affine,  if $D$ is defined by a non-zero divisor $f$ of $A$ and if $Z$ is defined by an ideal $I$ of $A$, we have 
\begin{equation*}
X^{(D_Z)}=\spec(A[I/f])
\end{equation*}
for $A[I/f]\subseteq A[1/f]$.

Let $h:Y\rightarrow X$ be a morphism of schemes such that $E=D\times_XY$ is an effective Cartier divisor of $Y$ and $i':W\rightarrow Y$ a closed immersion such that $h\circ i':W\rightarrow X$ factors through $i:Z\rightarrow X$. By the universality of the dilatation, $h:Y\rightarrow X$ is uniquely lifted to a morphism $\wt h: Y^{(E_W)}\rightarrow X^{(D_Z)}$.

\subsection{}\label{inflation}
Let $X$ be a scheme, $D$ an effective Cartier divisor of $X$, $M=mD$ a Cartier divisor supported on $D$, $Z_m=\spec(\sO_X[t]/(t^m))$ the $m$-th thickening of the zero section of the line bundle $\bA^1_X$ and $\wt X^{[M]}$ the dilatation of $(\bA^1_X,Z_m)$ along the divisor $\bA^1_D$. We denote by $\wt X^{(M)}$ the complement of the proper transform of $Z_m$ in $\wt X^{(M)}$ and we call it the {\it inflation} of $(X, D)$ of thickening $M$.  If $X=\spec (A)$ is affine and if $D$ is associated to a non-zero divisor $f$ of $A$, we have
\begin{equation*}
\wt X^{(M)}=\spec\left(A[t,s^{\pm 1}]/(f-t^ms)\right)
\end{equation*}

Let $Y$  be a scheme, $E$ an effective Cartier divisor of $Y$ and $h:Y\rightarrow X$ a morphism such that $h^*D=eE$. Let  $N=nE$ be an effective Cartier integer of $Y$ such that $l=en/m$ is an integer and $\wt Y^{(N)}$ the inflation of $(Y,E)$ of thickening $N$. By the universality of the blow-up, the morphism 
\begin{equation*}
 h_l:\bA^1_Y\rightarrow \bA^1_X, (t,y)\mapsto (t^l,h(y))
\end{equation*}
is uniquely lifted to a morphism
\begin{equation}\label{liftinf}
\wt h:\wt Y^{(N)}\rightarrow \wt X^{(M)}.
\end{equation}

\subsection{}\label{bundle}
Let $X$ be a scheme, $R$ a Cartier divisor of $X$, $\mathscr E$ a locally free $\sO_X$-sheaf on $X$ and $E=\spec(\mathrm{Sym}_{\sO_X}(\mathscr E^{\vee}))$ the vector bundle associated to $\mathscr E$. We denote by $E(R)$ the vector bundle $\spec(\mathrm{Sym}_{\sO_X}(\mathscr E\otimes_{\sO_X}\sO_X(R))^{\vee})$.

\subsection{}\label{gsnot}
Let $k$ be a perfect field of characteristic $p>0$, $S$ a connected $k$-scheme, $E$ a vector bundle over $S$ and $G$ a finite \'etale $p$-torsion commutative group scheme over $S$. We denote by $G^{\vee}$ the Cartier dual of $G$ and by $E^{\vee}$ the dual vector bundle of $E$.  Let $\alpha$ be an element in $\Hom_{\gp,S}(E^{\vee}, \bA^1_S)=E^{\vee}(S)$ and $\beta$ an element in $\Hom_{\gp,S}(\bF_p, G)=G(S)$. The pair of group morphisms $(\alpha, \beta)$ give an extension $\wt E$ in $\Ext_S(E,G)$ by following diagram
\begin{equation}\label{pbpo}
\xymatrix{\relax
0\ar[r]&\bF_p\ar[r]&\bA^1_S\ar@{}|-{\Box}[rd]\ar[r]^-(0.5){\rL}&\bA^1_S\ar[r]&0\\
0\ar[r]&\bF_p\ar[r]\ar[d]_-(0.5){\beta}\ar@{=}[u]&E'\ar[r]\ar[d]\ar[u]&E\ar@{=}[d]\ar[r]\ar[u]_-(0.5){\alpha}&0\\
0\ar[r]&G\ar[r]&\wt E\ar[r]&E\ar[r]&0}
\end{equation}
where horizontal lines are exact sequence of group schemes, $\rL:\bA^1_S\rightarrow \bA^1_S$ is the Lang's isogeny $\rL^*(t)=t^p-t$ for the canonical coordinate $t$ of $\bA^1_S$, $E'$ is the pull-back of $\rL:\bA^1_S\rightarrow \bA^1_S$ by $\alpha:E\rightarrow \bA^1_S$ and $\wt E$ is the push-out of $E'$ by  $\beta:\bF_p\rightarrow G$. \'Etale locally on $S$, we have the canonical isomorphism $\Hom_{\gp,S}(G^{\vee}, E^{\vee})\iso E^{\vee}(S)\otimes_{\bF_p}G(S)$. The diagram \eqref{pbpo} gives a morphism of abelian groups
\begin{equation}\label{homtoext}
\Hom_{\gp,S}(G^{\vee}, E^{\vee})\rightarrow \Ext_S(E,G).
\end{equation}
by \'etale descent.
Let $S_n$ be the fiber product $S\times_{k,\F_k^{-n}}k$ where $\F^{-n}_k$ denotes the inverse of the $n$-th power of the Frobenius of $k$. Let $\pi_n:S_n\rightarrow S$ be the composition of the first projection $\pr_1:S_n\rightarrow S$ and the $n$-th power of the Frobenius of $S$. The map $\pr_1:S_n\rightarrow S$ is a $k$-morphism and  gives an equivalence between \'etale sites on $S$ and $S_n$. Hence, the pull-back $\Ext_S(E,G)\rightarrow \Ext_{S_n}(E\times_SS_n, G\times_SS_n)$ is an isomorphism. The map \eqref{homtoext} give rise to a morphism of abelian groups
\begin{equation}\label{injlimgr}
\varinjlim_n\Hom_{\gp, S_n}(G^\vee\times_SS_n, E^\vee\times_SS_n)\rightarrow \Ext_S(E,G).
\end{equation}

\begin{proposition}[{\cite[Proposition 1.20, Lemma 1.21]{wr}}]\label{asgs}
We keep the notation and assumptions of \ref{gsnot}. Then
the map \eqref{injlimgr} is an isomorphism. Let $\varphi:G^{\vee}\rightarrow E^{\vee}$ be an element of the left side of \eqref{injlimgr} and $ E_{\varphi}$ the corresponding extension of $E$ by $G$. Then, $E_{\varphi,\bar s}$ is connected for any geometric point $\bar s$ of $S$, if and only if $\varphi:G^{\vee}\rightarrow E^{\vee}$ is a closed immersion.
\end{proposition}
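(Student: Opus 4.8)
The plan is to prove the two assertions in turn; both rely on the Artin--Schreier/Lang exact sequence $0\to\bF_p\to\bA^1_S\xrightarrow{\rL}\bA^1_S\to0$ of \eqref{pbpo} and on the fact, used already in \ref{gsnot}, that each Frobenius twist $\pr_1\colon S_n\to S$ is faithfully flat and a universal homeomorphism, hence induces an equivalence of \'etale sites and an isomorphism on $\Ext$-groups of commutative group schemes with \'etale kernel.

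\medskip

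\noindent\textbf{The map \eqref{injlimgr} is an isomorphism.} The assertion is \'etale-local on $S$ --- the left-hand side is compatible with \'etale base change by descent for homomorphisms of group schemes, the right-hand side by the equivalence of \'etale sites just mentioned --- so one may assume $G$ constant and, by additivity of both sides in $G$, reduce to $G=\bF_p$. The Lang sequence then yields the exact sequence
\begin{equation*}
\Hom_{\gp,S}(E,\bG_a)\xrightarrow{\ \rL\circ(-)\ }\Hom_{\gp,S}(E,\bG_a)\longrightarrow\Ext_S(E,\bF_p)\longrightarrow\Ext_S(E,\bG_a)\xrightarrow{\ \rL\circ(-)\ }\Ext_S(E,\bG_a)
\end{equation*}
of $\Hom$- and $\Ext$-groups of commutative group schemes over $S$. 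I would run this over each $S_n$ and pass to the colimit. On the left one has $\Hom_{\gp,S_n}(\bF_p,E^{\vee}\times_SS_n)=E^{\vee}(S_n)$, and unwinding \eqref{pbpo} shows that the resulting map $E^{\vee}(S_n)\to\Ext_S(E,\bF_p)$ sends a section to the Artin--Schreier extension of $E$ attached to the additive function obtained by pulling the corresponding linear form back along the $n$-th relative Frobenius of $E$, i.e.\ a function of the shape $\sum_ic_ix_i^{p^n}$. Two points then remain. First, these special additive functions generate $\Hom_{\gp,S}(E,\bG_a)$ modulo $\rL\circ\Hom_{\gp,S}(E,\bG_a)$: by a descending induction on the exponents, every additive polynomial in the coordinates of $E$ is congruent to one of the form $\sum_ic_ix_i^{p^n}$ for $n$ large --- here one uses that $E$ is a vector bundle, so that $\Hom_{\gp,S}(E,\bG_a)$ is free over a twisted polynomial ring in the Frobenius. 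Second, and this is the main obstacle, the error term $\Ext_S(E,\bG_a)$ must disappear in the colimit: equivalently, one must show that every group-scheme extension of a vector bundle by $\bF_p$ becomes, after a sufficiently large Frobenius twist of $S$, the pull-back of the Lang isogeny along an honest homomorphism $E\to\bG_a$. Granting this, a diagram chase of the colimit of the displayed sequence shows that \eqref{injlimgr} is bijective, and in particular that $\varphi$ is uniquely determined by $E_\varphi$.

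\medskip

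\noindent\textbf{Connectedness versus closed immersion.} Both properties are stable under the universal homeomorphisms $S_n\to S$ and may be tested on fibres, so I would base change to a geometric point $\bar s$ of $S$. Over the separably closed residue field $E_{\bar s}$ is an affine space, and the exact sequence $0\to G_{\bar s}\to E_{\varphi,\bar s}\to E_{\bar s}\to0$ exhibits $E_{\varphi,\bar s}\to E_{\bar s}$ as a finite \'etale $G_{\bar s}$-Galois cover; hence $E_{\varphi,\bar s}$ is connected if and only if the classifying homomorphism $\rho\colon\pi_1(E_{\bar s})\to G_{\bar s}$ is surjective, equivalently --- by $\bF_p$-linear duality of the finite abelian $p$-group $G_{\bar s}$ --- if and only if the induced map $\rho^{\vee}\colon G_{\bar s}^{\vee}\to\Hom(\pi_1(E_{\bar s}),\bF_p)=H^1(E_{\bar s},\bF_p)$ of \'etale cohomology is injective. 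By the pull-back/push-out description \eqref{pbpo}, $\rho^{\vee}$ factors as $G_{\bar s}^{\vee}\xrightarrow{\varphi_{\bar s}}E^{\vee}_{\bar s}\to H^1(E_{\bar s},\bF_p)$, where the second arrow sends a linear form $\ell$ on $E_{\bar s}$ to the class of the Artin--Schreier cover $t^p-t=\ell$; this arrow is injective, because a nonzero homogeneous linear polynomial on an affine space over a field is never of the form $f^p-f$ (compare degrees). Therefore $\rho^{\vee}$ is injective if and only if $\varphi_{\bar s}$ is a monomorphism, and a proper monomorphism --- here the finite flat $S$-group scheme $G^{\vee}$ mapping to the separated $S$-scheme $E^{\vee}$ --- is a closed immersion, while conversely a closed immersion restricts to one on every fibre and, $G^{\vee}$ being finite and flat over $S$, fibrewise closed immersions assemble to a closed immersion. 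Combined with the first part this proves the proposition. The one point needing care is that ``closed immersion'' is preserved and reflected by the transition maps of the colimit --- these are base changes along the Frobenius of the base --- so that the hypothesis on $\varphi$ is well posed.
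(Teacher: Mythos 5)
The paper itself offers no proof of this proposition --- it is quoted directly from [wr, Proposition 1.20 and Lemma 1.21] --- so your argument can only be measured against Saito's. The genuine gap is in your first part, and you have in fact located it yourself: the surjectivity of \eqref{injlimgr} is exactly the vanishing of the map $\Ext_S(E,\bF_p)\rightarrow \Ext_S(E,\bA^1_S)$ in your long exact sequence, i.e.\ the statement that every group-scheme extension of the vector bundle $E$ by $\bF_p$ is the pull-back of the Lang isogeny $\rL$ along some additive homomorphism $E\rightarrow \bA^1_S$. You call this ``the main obstacle'' and then write ``granting this''; but this is the substantive content of the first assertion --- the reduction to $G=\bF_p$, the congruence $cx^{p^{i}}\equiv c^{p}x^{p^{i+1}}$ modulo $g\mapsto g^{p}-g$ that lets the colimit over the $S_n$ absorb all additive polynomials into linear forms, and the injectivity check are all routine by comparison, while $\Ext_S(E,\bA^1_S)$ is genuinely non-zero (Witt-vector extensions), so its non-interference really has to be proved. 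The standard way to close this is to observe that an extension of $E$ by $\bF_p$ is in particular an $\bF_p$-torsor over $E$ that is trivial along the zero section, hence is given (locally on $S$) by an Artin--Schreier class $f\in\Gamma(E,\sO_E)$ modulo $g^{p}-g$; the group law forces $f$ to be primitive modulo $g^{p}-g$; and a primitive class in characteristic $p$ is represented by an additive polynomial. As written, the first half of the proposition is not established.

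Your second part is essentially correct and follows the expected route: reduce to a geometric fibre, view $E_{\varphi,\bar s}\rightarrow E_{\bar s}$ as a $G_{\bar s}$-torsor over an affine space, and use that the Artin--Schreier map $E^{\vee}_{\bar s}\rightarrow H^1(E_{\bar s},\bF_p)$ is injective because a non-zero linear form is never of the form $f^{p}-f$. The one imprecision is your appeal to ``$G^{\vee}$ finite and flat'' to pass between fibrewise injectivity and a closed immersion: finite flat is not enough (infinitesimal group schemes already defeat it). What saves the argument is that $G^{\vee}$ is finite \emph{\'etale} over $S$, so that $\varphi$ is automatically proper and unramified and its universal injectivity can be tested on geometric fibres; you should say this explicitly, and likewise when checking that the property is preserved and reflected under the Frobenius transition maps of the colimit.
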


\section{Ramification of \'etale covers}

\subsection{}
In this section, let $k$ denotes a perfect field of characteristic $p>0$, $X$ a connected, separated and smooth $k$-schemes, $D$ an irreducible effective divisor of $X$ smooth over $\spec(k)$, $i:D\rightarrow X$ and $j:U=X-D\rightarrow X$ the canonical injections, and $\delta:X\rightarrow X\times_kX$ the diagonal map. We denote by $(X\times_kX)'$ the blow-up of $X\times_kX$ along $\delta(D)$ and by $(X\times_kX)^{(D)}$ the complement of the proper transforms $D\times_kX$ and $X\times_kD$ along in $(X\times_kX)'$. We consider $(X\times_kX)^{(D)}$ as an $X$-scheme by the second projection. This projection is smooth (\cite[Lemma 2.1.3]{wr}). By the universality of the blow-ups, the diagonal $\delta:X\rightarrow X\times_kX$ induces a closed immersion $\delta^{(D)}:X\rightarrow (X\times_kX)^{(D)}$.  We denote by $D^{(D)}$ the pull-back of $D\subset X$ in $(X\times_kX)^{(D)}$.  We have the following 
 Cartesian diagram
 \begin{equation}\label{XXD}
\xymatrix{\relax
D\ar[r]^i\ar[d]\ar@{}[rd]|{\Box}&X\ar@{}[rd]|{\Box}\ar[d]^{\delta^{(D)}}&U\ar[d]^{\delta_{U}}\ar[l]_j\\
D^{(D)}\ar@{}[rd]|{\Box}\ar[r]^-(0.5){i^{(D)}}\ar[d]&(X\times_kX)^{(D)\ar[d]^{\pr_2^{(D)}}}\ar@{}[rd]|{\Box}&U\times_kU\ar[l]_-(0.5){j^{(D)}}\ar[d]^{\pr_2}\\
D\ar[r]&X&U\ar[l]}
\end{equation}
where $i^{(D)}$ and $j^{(D)}$ are canonical injections and $\pr^{(D)}_2$ is the second projection. Notice that the composition of $\delta^{(D)}$ and $\pr_2^{(D)}$ is the identity of $X$. We see that the closed subscheme $D^{(D)}$ is an irreducible effective divisor of $(X\times_kX)^{(D)}$ smooth over $\spec(k)$ (\cite[Lemma 2.1.3]{wr}).

\subsection{}\label{sectionXD}
Let $M=mD$ be an effective Cartier divisor of $X$ supported on $D$ and $\wt X^{(M)}$ the inflation of $(X,D)$ of thickening $M$. The scheme $\wt X^{(M)}$ is a smooth $k$-scheme (\cite[Lemma 1.18]{wr}). Let $\beta^{(M)}:\wt X^{(M)}\rightarrow X$ and $\gamma^{(M)}:\wt X^{(M)}\rightarrow \mathbb A^1_k$ be canonical projections. The divisor $\wt D^{(M)}=\gamma^{(M)*}(\{0\})$ is an irreducible effective divisor of $\wt X^{(M)}$ smooth over $\spec (k)$  and we have commutative diagram with Cartesian squares ({\it loc. cit.})
\begin{equation}\label{wtXD}
\xymatrix{
m\wt D^{(M)}\ar[r]^-(0.5){i^{(M)}}\ar[d]\ar@{}[rd]|{\Box}&\wt X^{(M)}\ar@{}[rd]|{\Box}\ar[d]^{\beta^{(M)}}&\mathbb G_m\times_kU\ar[d]^{\pr_2}\ar[l]_-(0.5){j^{(M)}}\\
D\ar[r]_-(0.5){i}&X&U\ar[l]^-(0.5){j}}
\end{equation}
For an effective rational divisor $R=rD$ of $X$ such that $rm$ is an integer, we denote by $\wt R^{(M)}$ the effective divisor $\beta^{(M)*}(R)=rm \wt D^{(M)}$ of $\wt X^{(M)}$.

We denote by  $(X\times_kX)^{(D,M)}$ the inflation of $((X\times_kX)^{(D)}, D^{(D)})$ of thickening $mD^{(D)}$.  Let $\phi^{(M)}:(X\times_kX)^{(D,M)}\rightarrow (X\times_kX)^{(D)}$ and $\psi^{(M)}:(X\times_kX)^{(D,M)}\rightarrow \bA^1_k$ be the canonical projections. Applying \eqref{wtXD} to the smooth $k$-scheme $(X\times_kX)^{(D,M)}$, we see that $(X\times_kX)^{(D,M)}$ and the irreducible effective divisor $D^{(D,M)}=\psi^{(M)*}(\{0\})$ are smooth over $\spec(k)$, that $\phi^{(M)*}(D^{(D)})=mD^{(D,M)}$ and that the complement of $D^{(D,M)}$ in $(X\times_kX)^{(D,M)}$ is $\mathbb G_m\times_kU\times_kU$.

By \eqref{liftinf}, the map $\delta^{(D)}:X\rightarrow (X\times_kX)^{(D)}$ uniquely induces a closed immersion 
\begin{equation*}
\delta^{(D,M)}:\wt X^{(M)} \rightarrow (X\times_kX)^{(D,M)}
\end{equation*}
and the second projection $\pr_2^{(D)}:(X\times_kX)^{(D)}\rightarrow X$ uniquely induces a projection 
\begin{equation*}
\pr_2^{(D,M)}:(X\times_kX)^{(D,M)}\rightarrow \wt X^{(M)}
\end{equation*}
Noticing that $\beta^{(M)}\circ\pr_2^{(D,M)}= \pr^{(D)}_2\circ\phi^{(M)}$ and that $\pr^{(D,M)}\circ\delta^{(D,M)}=\mathrm{id}_{\wt X^{(M)}}$. Combining \eqref{XXD} and \eqref{wtXD}, we have the following Cartesian diagram 
\begin{equation}\label{XXDM}
\xymatrix{\relax
\wt D^{(M)}\ar@{}[rd]|{\Box}\ar[r]^-(0.5){i^{(M)}}\ar[d]&\wt X^{(M)}\ar[d]^{\delta^{(D,M)}}\ar@{}[rd]|{\Box}&\mathbb G_m\times_kU\ar[l]_-(0.5){j^{(M)}}\ar[d]^{\mathrm{id}\times\delta_U}\\
D^{(D,M)}\ar@{}[rd]|{\Box}\ar[r]^-(0.5){i^{(D,M)}}\ar[d]&(X\times_kX)^{(D,M)}\ar[d]\ar[d]^{\pr_2^{(D,M)}}\ar@{}[rd]|{\Box}&\mathbb G_m\times_k (U\times_kU)\ar[d]^{\mathrm{id}\times\pr_2}\ar[l]_-(0.5){j^{(D,M)}}\\
\wt D^{(M)}\ar[r]_-(0.5){i^{(M)}}&\wt X^{(M)}&\mathbb G_m\times_kU\ar[l]^-(0.5){j^{(M)}}}
\end{equation}
where compositions of vertical arrows are identities.

\subsection{}\label{defXXRM}
Let $R=rD$ $(r\in\bQ_{\geq 1})$ be an effective rational divisor of $X$ and $M=mD$ an effective divisor such that $mr$ is an integer. We denote by $(X\times_kX)^{(R,M)}$ the dilatation of $((X\times_kX)^{(D,M)}, \wt X^{(D,M)})$ along the divisor $m(r-1)D^{(D,M)}$, by $\pr^{(R,M)}_2:(X\times_kX)^{(R,M)}\rightarrow \wt X^{(M)}$ the composition of the canonical projections $(X\times_kX)^{(R,M)}\rightarrow (X\times_kX)^{(D,M)}$ and $\pr_2^{(D,M)}:(X\times_kX)^{(D,M)}\rightarrow \wt X^{(M)}$ and by $D^{(R,M)}$ the pull-back of $\wt D^{(M)}\subset \wt X^{(M)}$ in $(X\times_kX)^{(R,M)}$. The scheme $D^{(R,M)}$ is an irreducible divisor of $(X\times_kX)^{(R,M)}$ and both $D^{(R,M)}$ and $(X\times_kX)^{(R,M)}$ are smooth over $\spec(k)$. By the universality of the blow-up, the map $\delta^{(D,M)}:\wt X^{(M)} \rightarrow (X\times_kX)^{(D,M)}$ uniquely lifts to a closed immersion 
\begin{equation*}
\delta^{(R,M)}:\wt X^{(M)} \rightarrow (X\times_kX)^{(R,M)}
\end{equation*}
The diagram \eqref{XXDM} lifts to the following Cartesian diagram
\begin{equation}\label{XXRMtoXM}
\xymatrix{\relax
\wt D^{(M)}\ar@{}[rd]|{\Box}\ar[r]^-(0.5){i^{(M)}}\ar[d]&\wt X^{(M)}\ar[d]^{\delta^{(R,M)}}\ar@{}[rd]|{\Box}&\mathbb G_m\times_kU\ar[l]_-(0.5){j^{(M)}}\ar[d]^{\mathrm{id}\times\delta_U}\\
D^{(R,M)}\ar@{}[rd]|{\Box}\ar[r]^-(0.5){i^{(R,M)}}\ar[d]&(X\times_kX)^{(R,M)}\ar[d]\ar[d]^{\pr_2^{(R,M)}}\ar@{}[rd]|{\Box}&\mathbb G_m\times_k (U\times_kU)\ar[d]^{\mathrm{id}\times\pr_2}\ar[l]_-(0.5){j^{(R,M)}}\\
\wt D^{(M)}\ar[r]_-(0.5){i^{(M)}}&\wt X^{(M)}&\mathbb G_m\times_kU\ar[l]^-(0.5){j^{(M)}}}
\end{equation}
The scheme $D^{(R,M)}$ is a vector bundle over $\wt D^{(M)}$ and we have  (\cite[Corollary 2.9]{wr}), \eqref{bundle}
\begin{equation}\label{DRMiso}
D^{(R,M)}\iso(\mathbb TX\times_X\wt D^{(M)})(-\wt R^{(M)}).
\end{equation}

\begin{proposition}[{\cite[Lemma 2.6]{wr}}]\label{otherdef}
We take the notation and assumptions of \ref{defXXRM}. Let 
\begin{equation*}
\Theta:(X\times_kX)^{(R,M)}\rightarrow \wt X^{(M)}\times_kX
\end{equation*}
be the unique morphism induced by the projection $\pr^{(R,M)}_2:(X\times_kX)^{(R,M)}\rightarrow \wt X^{(M)}$ and the composition $\beta^{(M)}\circ \pr^{(R,M)}_2: (X\times_kX)^{(R,M)}\rightarrow X$ and $\Gamma_{\beta}:\wt X^{(M)}\rightarrow \wt X^{(M)}\times_kX$ the graph of $\beta^{(M)}:\wt X^{(M)}\rightarrow X$. We denote by $ P^{(R,M)}$ the dilatation of $(\wt X^{(M)}\times_kX, \Gamma_{\beta}(\wt X^{(M)}))$ along the effective divisor $\wt R^{(M)}\times_kX$ of $\wt X^{(M)}\times_kX$ (\ref{sectionXD}).  Then, $\Theta:(X\times_kX)^{(R,M)}\rightarrow \wt X^{(M)}\times_kX$ is uniquely lifted to an open immersion $\wt \Theta:(X\times_kX)^{(R,M)}\rightarrow P^{(R,M)}$ and its image is the complement of the proper transform of $\wt X^{(M)}\times_k D\subset \wt X^{(M)}\times_kX$ in $P^{(R,M)}$.
\end{proposition}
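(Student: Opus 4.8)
The plan is to construct $\wt\Theta$ from the universal property of the dilatation $P^{(R,M)}\to\wt X^{(M)}\times_kX$, and then to verify that it is an open immersion with the asserted image by an explicit computation in local coordinates on $X$.

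Recall from \ref{dilatation} that, for a closed immersion $Z\hookrightarrow Y$ and an effective Cartier divisor $E$ of $Y$, the dilatation of $(Y,Z)$ along $E$ is terminal among $Y$-schemes $T$ for which the pull-back $E_T$ of $E$ is an effective Cartier divisor of $T$ and the composite $E_T\hookrightarrow T\to Y$ factors through $Z$. Applying this with $Y=\wt X^{(M)}\times_kX$, $Z=\Gamma_\beta(\wt X^{(M)})$ and $E=\wt R^{(M)}\times_kX$, in order to lift $\Theta$ to a morphism $\wt\Theta\colon(X\times_kX)^{(R,M)}\to P^{(R,M)}$ it suffices to check two things. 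First, $\Theta^*(\wt R^{(M)}\times_kX)$ should be an effective Cartier divisor of $(X\times_kX)^{(R,M)}$: since $\Theta$ followed by the projection to $\wt X^{(M)}$ is $\pr_2^{(R,M)}$ and $\wt R^{(M)}=rm\,\wt D^{(M)}$, this pull-back equals $rm\,D^{(R,M)}$, which is Cartier because $D^{(R,M)}$ is an irreducible divisor on the regular scheme $(X\times_kX)^{(R,M)}$ (smooth over $\spec(k)$ by \ref{defXXRM}). Second, the restriction of $\Theta$ to $rm\,D^{(R,M)}$ should factor through $\Gamma_\beta(\wt X^{(M)})$: this holds because over $D^{(R,M)}$ the two projections of $(X\times_kX)^{(R,M)}$ to $X$ coincide --- both factor through $\beta^{(M)}\colon\wt D^{(M)}\to D\hookrightarrow X$ --- by the Cartesian diagram \eqref{XXRMtoXM} together with the identification \eqref{DRMiso} of $D^{(R,M)}$ with a vector bundle over $\wt D^{(M)}$. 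The lift $\wt\Theta$ is then unique by the universal property.

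It remains to see that $\wt\Theta$ is an open immersion onto the complement of the proper transform of $\wt X^{(M)}\times_kD$. Both assertions are Zariski-local on $\wt X^{(M)}\times_kX$, hence may be checked after an \'etale localization on $X$, so I would assume $X=\spec(A)$ with a regular system of parameters one member of which is a local equation $x$ of $D$; then $\wt X^{(M)}=\spec\big(A[u,s^{\pm1}]/(x-u^ms)\big)$, with $\wt D^{(M)}=V(u)$ and $\wt R^{(M)}=rm\,V(u)$. In this local situation each of the three operations producing $(X\times_kX)^{(R,M)}$ from $X\times_kX$ --- the $(D)$-dilatation along $\delta(D)$ followed by deletion of the proper transforms of $D\times_kX$ and $X\times_kD$, the $(D,M)$-inflation adjoining the pair $(u,s)$ with $x\otimes1=u^ms$, and the $(R,M)$-dilatation along $m(r-1)D^{(D,M)}$ relative to the section $\wt X^{(M)}$ --- is the spectrum of a subalgebra of a localization of $A\otimes_kA$ (after adjoining the finitely many auxiliary variables), obtained by adjoining fractions whose denominators are powers of $x\otimes1$, respectively of $u$, and by inverting finitely many explicit units. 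Likewise $P^{(R,M)}$ is the spectrum of the analogous algebra over $\wt X^{(M)}\times_kX$, generated by the fractions $g/u^{rm}$ with $g$ running over local generators of the ideal of $\Gamma_\beta(\wt X^{(M)})$, the proper transform of $\wt R^{(M)}\times_kX$ having been removed. The statement then reduces to an explicit comparison of these two algebras, under which $(X\times_kX)^{(R,M)}$ becomes the localization of $P^{(R,M)}$ at a local equation of $\wt X^{(M)}\times_kD$.

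I expect this comparison to be the crux. The point to be extracted is that carrying out the inflation step and then the $(R,M)$-dilatation reproduces the single dilatation $P^{(R,M)}$ precisely after one excises the proper transform of $\wt X^{(M)}\times_kD$, and nothing more; the mechanism is that the relation $x=u^ms$ on $\wt X^{(M)}$ (with $s$ a unit) is exactly what turns the recipe ``blow up the graph $\Gamma_\beta(\wt X^{(M)})$'' into the recipe ``blow up $\delta(D)$, inflate, then dilate'', the surplus open locus deleted by the latter being the vanishing locus of that very equation. I would first attempt to package this into a general commutation lemma --- that the inflation of a dilatation is, up to deletion of a proper transform, a single dilatation of the product with $\bG_m$ --- and then deduce the proposition formally; failing a clean formal argument, the coordinate computation above, while finite, is fiddly, and the regularity of all intermediate schemes (furnished by \ref{sectionXD} and \ref{defXXRM}) is what allows one to pass freely between ``irreducible divisor'' and ``Cartier divisor'' throughout.
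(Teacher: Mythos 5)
A preliminary remark: the paper does not prove this proposition at all --- it is imported verbatim from \cite[Lemma 2.6]{wr} --- so your attempt can only be measured against Saito's original argument, which is indeed the combination of the universal property of dilatations with a local coordinate computation that you propose; your overall strategy is the right one. Note also that the second component of $\Theta$ as printed, $\beta^{(M)}\circ\pr_2^{(R,M)}$, must be a misprint for the composite $(X\times_kX)^{(R,M)}\rightarrow X\times_kX\xrightarrow{\pr_1}X$: with the printed definition $\Theta$ would factor through both $\pr_2^{(R,M)}$ and $\Gamma_\beta$, and no lift of it could be an open immersion. You tacitly work with the corrected map (your phrase ``the two projections to $X$''), which is right, but this should be made explicit.

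Two things keep the proposal from being a proof. The first is a genuine gap in your verification of the hypothesis of the universal property: what is needed is that the ideal of $\Gamma_\beta(\wt X^{(M)})$ pull back into the ideal of the divisor $rm\,D^{(R,M)}$, i.e.\ vanish to order $rm$ along $D^{(R,M)}$, whereas your argument --- that the two projections to $X$ agree over $D^{(R,M)}$ --- only yields vanishing to order one. The order $rm$ is precisely the sum of the contributions of the three stages of the construction and must be tracked. In coordinates $x_1=x\otimes1$, $x_2=1\otimes x$, $t_{i,1}=t_i\otimes1$, $t_{i,2}=1\otimes t_i$ with $D=V(x)$: the first blow-up gives $x_1-x_2=(u_0-1)x_2$ and $t_{i,1}-t_{i,2}=w_ix_2$ with $u_0$ a unit; the inflation gives $x_2=u^ms$ with $s$ a unit and $\wt D^{(M)}=V(u)$; the last dilatation gives $u_0-1=v_0u^{m(r-1)}$ and $w_i=v_iu^{m(r-1)}$. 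Hence on $(X\times_kX)^{(R,M)}$ one has $x_1-u^ms=v_0s\,u^{rm}$ and $t_{i,1}-t_{i,2}=v_is\,u^{rm}$, which is the required order-$rm$ vanishing of the generators of the ideal of $\Gamma_\beta$. The second shortfall is that the comparison of algebras, which you correctly identify as the crux, is announced rather than carried out; the identities above close it in one line. \'Etale-locally the coordinate ring of $P^{(R,M)}$ is generated over that of $\wt X^{(M)}\times_kX$ by $(x_1-u^ms)/u^{rm}=sv_0$ and $(t_{i,1}-t_{i,2})/u^{rm}=sv_i$, hence (as $s$ is invertible) by $v_0$ and the $v_i$, while that of $(X\times_kX)^{(R,M)}$ is generated by $v_0$, the $v_i$ and $u_0^{-1}$; since $x_1=u^ms\,u_0$ with $u_0=1+v_0u^{m(r-1)}$ regular on $P^{(R,M)}$, the proper transform of $\wt X^{(M)}\times_kD=V(x_1)$ in $P^{(R,M)}$ is exactly $V(u_0)$, and $(X\times_kX)^{(R,M)}$ is its complement. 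So the route is correct, but the multiplicity bookkeeping and the algebra comparison --- the actual content of the lemma --- still had to be supplied.
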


\subsection{}
 Let $V$ be an \'etale torsor over $U$ of group $G$, $\Delta:G\rightarrow G\times G$ the diagonal homomorphism,  $W$ the quotient $(V\times_kV)/\Delta(G)$ and $W\rightarrow U\times_kU$ the canonical projection.  The diagonal map $\delta_U:U\rightarrow U\times_kU$ is uniquely lifts to an injection $\varepsilon_U:U\rightarrow W$. Let $R=rD$ $(r\in\bQ_{\geq 1})$ be an effective rational divisor of $X$ and $M=mD$ an effective divisor such that $mr$ is an integer. We denote by $W^{(R,M)}$ the integral closure of $(X\times_kX)^{(R,M)}$ in $\bG_m\times_kW$, by $h:W^{(R,M)}\rightarrow (X\times_kX)^{(R,M)}$ the canonical projection and by $\varepsilon:\wt X^{(M)}\rightarrow W^{(R,M)}$ the unique lifting of $\mathrm{id}\times\varepsilon_U:\bG_m\times_kU\rightarrow \bG_m\times_kW$. We have the following commutative diagram with Cartesian squares
 \begin{equation}\label{WtoUU}
 \xymatrix{\relax
 \bG_m\times_kU\ar@{}[rd]|{\Box}\ar[r]\ar[d]&\bG_m\times_k W\ar[d]\ar[r]\ar@{}[rd]|{\Box}&\bG_m\times_k(U\times_kU)\ar[d]^{j^{(R,M)}}\\
 \wt X^{(M)}\ar[r]_-(0.5){\varepsilon}&W^{(R,M)}\ar[r]\ar[r]_-(0.5){h}&(X\times_kX)^{(R,M)}}
 \end{equation}
 
 \begin{definition}\label{geomram}
 Let $V$ be an \'etale torsor over $U$ of group $G$, $R=rD$ $(r>1)$ an effective rational divisor of $X$ and $M=mD$ an effective divisor such that $mr$ is an integer. We say that {\it the ramification of $V$ over $U$ along $D$ is bounded by $R+$ at a point} $x\in D$ if the finite morphism $h:W^{(R,M)}\rightarrow (X\times_kX)^{(R,M)}$ in \eqref{WtoUU} is \'etale on a Zariski neighborhood of $\varepsilon(\beta^{(M)-1}(x))$ \eqref{wtXD}. We say that {\it the ramification of $V$ over $U$ along $D$ is bounded by $R+$} if the finite morphism $h:W^{(R,M)}\rightarrow (X\times_kX)^{(R,M)}$ is \'etale on a Zariski neighborhood of $\varepsilon(\wt D^{(M)})$. 
 \end{definition}

\begin{remark}
Let $m'$ be an integer divisible by $m$ and $M'=m'D$. We have a surjection $\alpha:\wt X^{(M')}\rightarrow \wt X^{(M)}$ \eqref{liftinf}. By \cite[Lemma 2.13]{wr}, the map $h':W^{(R,M')}\rightarrow (X\times_kX)^{(R,M')}$ associated to $(R,M')$ is \'etale on a Zariski neighborhood of $x'\in \wt X^{(M')}$ if and only if $h:W^{(R,M)}\rightarrow (X\times_kX)^{(R,M)}$ is \'etale on a Zariski neighborhood of the image of $x'$ in $\wt X^{(M)}$. Hence, Definition \ref{geomram} is independent of the choice of the divisor $M$. In the following of this section, we fix $M$ and we omit it in the superscript of all notation for simplicity. The ramification of $V/U$ along $D$ is bounded by $R+$ implies that it is also bounded by $S+$ for an effective divisor $S$ of $X$ supported on $D$ satisfying $S\geq R$ ({\it loc. cit.}).
\end{remark}

\subsection{}\label{Ecirc}
 Let $V$ be an \'etale torsor over $U$ of group $G$. Assume that the ramification of $V/U$ along $D$ is bounded by $R+$, for an effective rational divisor $R=rD$ ($r>1$). Fix an effective divisor $M=mD$  of $X$ such that $mr$ is an integer. We denote by $W^{(R)}_0$ the largest open subscheme of $W^{(R)}$ which is \'etale over $(X\times_kX)^{(R)}$. We define 
$E^{(R)}$ by the following Cartesian diagram
\begin{equation}\label{defER}
\xymatrix{\relax
E^{(R)}\ar[r]\ar@{}[rd]|{\Box}\ar[d]&W_0^{(R)}\ar[d]^h\\
D^{(R)}\ar[r]&(X\times_kX)^{(R)}}
\end{equation}
  By \cite[Corollary 2.15]{wr}, $h|_{E^{(R)}}:E^{(R)}\rightarrow D^{(R)}$ is an \'etale homomorphism of smooth group schemes over $\wt D$. We denote by $E^{(R)\circ}$ the unique connected normal  subgroup scheme of $E^{(R)}$, by $h_0:E^{(R)\circ}\rightarrow D^{(R)}$ the restriction of $h|_{E^{(R)}}:E^{(R)}\rightarrow D^{(R)}$ and by $G^{(R)}$ the kernel of $h_0:E^{(R)\circ}\rightarrow D^{(R)}$. By \cite[Proposition 2.16]{wr}, we have
\begin{itemize}
\item[(i)]
the group scheme $E^{(R)\circ}$ is commutative and $p$-torsion;
\item[(ii)]
 the map $h_0:E^{(R)\circ}\rightarrow D^{(R)}$ is an \'etale and surjective homomorphism of group schemes and $G^{(R)}$ is an \'etale commutative $p$-torsion group scheme over $\wt D$.
\end{itemize}

\begin{definition}\label{nondeg}
We keep the notation and assumptions of \ref{Ecirc}. We say that the ramification of $V/U$ along $D$ bounded by $R+$ is {\it non-degenerate} if the \'etale homomorphism $h_0:E^{(R)\circ}\rightarrow D^{(R)}$ is finite.
\end{definition}

The canonical projection $\wt D\rightarrow D$ is a principal bundle of fiber $\bG_m$. The $\wt D$-morphism $h_0:E^{(R)\circ}\rightarrow D^{(R)}$  is invariant under the canonical $\bG_m$-action. Hence, $h_0$ is finite after replacing $D$ by a Zariski open dense subscheme. Hence, the ramification of $V/U$ along $D$ bounded by $R+$ is non-degenerate after replacing $D$ by a Zariski open dense subscheme.

\begin{definition}\label{geomcharform}
We keep the notation and assumptions of \ref{Ecirc} and we assume that the ramification of $V/U$ along $D$ is non-degenerate. Let $G^{(R)\vee}$ be the Cartier dual of $G^{(R)}$ and $D^{(R)\vee}=(\bT^*X\times_X\wt D)(\wt R)$ the dual vector bundle of $D^{(R)}$. The exact sequence of groups schemes 
\begin{equation}\label{exseqgs}
0\to G^{(R)}\to E^{(R)\circ}\to D^{(R)}\to 0
\end{equation}
corresponds to an injective homomorphism
\begin{equation*}
\ch_R(V/U): G^{(R)\vee}\rightarrow D^{(R)\vee}=(\bT^*X\times_X\wt D)(\wt R)
\end{equation*}
 of group schemes over a finite radicial cover $\wt D_n$ over $\wt D$ (\ref{gsnot} and Proposition \ref{asgs}). We call $\ch_R(V/U)$ the {\it characteristic form of $V/U$ along $D$ with multiplicity $R$}.
\end{definition}

\subsection{}\label{X'toX}
Let $f:X'\rightarrow X$ be a morphism of connected, separated and smooth $k$-schemes, $D'$ an irreducible effective divisor smooth over $\spec(k)$ such that $eD'=f^*D$ and $U'$ the complement of $D'$ in $X'$. We have the following commutative diagram
\begin{equation*}
\xymatrix{\relax
D'\ar[d]\ar[r]^{i'}&X'\ar[d]^f\ar@{}[rd]|{\Box}&U'\ar[l]_{j'}\ar[d]\\
D\ar[r]_i&X&U\ar[l]^j}
\end{equation*}
Let $R=rD$ $(r\in \bQ_{>1})$ be a rational effective divisor of $X$. We fix an effective divisor  $M=mD$ of $X$ such that $m'=m/e$ and $m'r$ are integers and fix an effective divisor $M'=m'D'$ of $X'$. The morphism 
\begin{equation*}
 \mathrm{id}\times f:\bA^1_{X'}\rightarrow \bA^1_X, (t,x')\mapsto (t,f(x'))
\end{equation*}
induces a canonical morphism $\wt f:\wt X'\rightarrow \wt X$ of inflations \eqref{liftinf}. We denote by $R'=f^*R$ the pull-back of $R$ on $X'$. By the left square of \eqref{wtXD}, we have the following Cartesian diagrams
\begin{equation*}
\xymatrix{\relax
\wt D'\ar@{}[rd]|{\Box}\ar[r]\ar[d]&\wt D\ar[d]&&\wt R'\ar@{}[rd]|{\Box}\ar[r]\ar[d]&\wt R\ar[d]\\
\wt X'\ar[r]^-(0.5){\wt f}&\wt X&&\wt X'\ar[r]^-(0.5){\wt f}&\wt X}
\end{equation*}
By the universality of the dilatation and Proposition \ref{otherdef},  $\wt f:\wt X'\rightarrow \wt X$ induces the morphism
\begin{equation*}
\wt f':(X'\times_kX')^{(R')}\rightarrow (X\times_kX)^{(R)}
\end{equation*}
that makes the following diagram commutative
\begin{equation*}
\xymatrix{\relax
(X'\times_kX')^{(R')}\ar[r]^-(0.5){\wt f'}\ar[d]_{\pr^{(R')}_2}&(X\times_kX)^{(R)}\ar[d]^{\pr^{(R)}_2}\\
\wt X'\ar[r]_-(0.5){\wt f}&\wt X}
\end{equation*}
We denote by $D'^{(R')}$ the pull-back of $\wt D'\subset \wt X'$  by $\pr^{(R')}_2$ (cf. \eqref{XXRMtoXM}). 
The diagram above induces the homomorphism $\lambda:D'^{(R')}\rightarrow D^{(R)}\times_{\wt D}\wt D'$ of vector bundles over $\wt D'$ which is functorial to the isomorphism \eqref{DRMiso}, i.e., we have the following commutative diagram (cf. \cite[4.8]{as rc})
\begin{equation*}
\xymatrix{\relax
D'^{(R')}\ar[d]_{\lambda}\ar[r]^-(0.5){\sim}&(\bT X'\times_{X'}\wt D')(-\wt R')\ar[d]^{\partial f}\\
D^{(R)}\times_{\wt D}\wt D'\ar[r]^-(0.5){\sim}&(\bT X\times_{X}\wt D')(-\wt R')}
\end{equation*}

\subsection{} \label{V'}
We keep the notation and assumptions of \ref{X'toX}. Let $V$ be an \'etale torsor of $U$ of group $G$ and we assume that the ramification of $V/U$ along $D$ is bounded by $R+$. Let $V'=U'\times_UV$ be the pull-back \'etale torsor over $U'$, $W'$ the quotient $(V'\times_kV')/\Delta(G)$ and $\varepsilon_{U'}:U'\rightarrow W'$ the unique lifting of the diagonal $\delta_{U'}:U'\rightarrow U'\times_kU'$. We have the following commutative diagram with Cartesian squares
\begin{equation*}
\xymatrix{\relax
\bG_m\times_kW'\ar[r]\ar[d]\ar@{}[rd]|{\Box}&\bG_m\times_k(U'\times_kU')\ar[d]\ar[r]\ar@{}[rd]|{\Box}&(X'\times_kX')^{(R')}\ar[d]^{\wt f'}\\
\bG_m\times_kW\ar[r]&\bG_m\times_k(U\times_kU)\ar[r]&(X\times_kX)^{(R)}}
\end{equation*}
We  further denote by $W'^{(R')}$ the integral closure of $(X'\times_kX')^{(R')}$ in $\bG_m\times_kW'$ and by $\varepsilon':\wt X'\rightarrow W'^{(R')}$ the unique lifting of $\mathrm{id}\times\varepsilon_{U'}:\bG_m\times_k U'\rightarrow \bG_m\times_k W'$. We have the following commutative diagram
\begin{equation*}
\xymatrix{\relax
\wt D'\ar[r]\ar[d]\ar@{}[rd]|{\Box}&\wt X'\ar[d]^{\wt f}\ar[r]^-(0.5){\varepsilon'}&W'^{(R')}\ar[d]\ar[r]^-(0.5){h'}&(X'\times_kX')^{(R')}\ar[d]^{\wt f'}\\
\wt D\ar[r]&\wt X\ar[r]_-(0.5){\varepsilon}&W^{(R)}\ar[r]_-(0.5){h}&(X\times_kX)^{(R)}}
\end{equation*}
We denote by $W'^{(R')}_0$ the largest open subscheme of $W'^{(R')}$ which is \'etale over $(X'\times_kX')^{(R')}$. The canonical morphisms 
\begin{equation*}
W'^{(R')}\to W^{(R)}\times_{(X\times_kX)^{(R)}}(X'\times_kX')^{(R')}\to (X'\times_kX')^{(R')}
\end{equation*}
are finite and surjective. Notice that $W^{(R)}_0\times_{(X\times_kX)^{(R)}}(X'\times_kX')^{(R')}$ is \'etale over the smooth $k$-scheme $(X'\times_kX')^{(R')}$. It is also a smooth $k$-schemes. Hence, by the definition of $W'^{(R')}_0$, we have an open immersion 
\begin{equation}\label{keyopenimm}
W_0^{(R)}\times_{(X\times_kX)^{(R)}}(X'\times_kX')^{(R')}\rightarrow W'^{(R')}_0.
\end{equation}
The inclusion \eqref{keyopenimm} implies that $\varepsilon'(\wt D')$ is contained in $W'^{(R')}_0$ if $\varepsilon(\wt D)$ is contained in $W^{(R)}_0$. Namely, the ramification of $V'/U'$ is bounded by $R'+$ if the ramification of $V/U$ is bounded by $R+$ (Definition \ref{geomram}). 

Assume that the ramification of $V/U$ is bounded by $R+$ and non-degenerate. After replacing $D'$ by a Zariski open dense subset, we may assume that the ramification of $V'/U'$ bounded by $R'+$ is non-degenerate. We denote by $E'^{(R')}$ the pull-back of $D'^{(R')}\subset (X'\times_kX')^{(R')}$ by $h':W'^{(R')}_0\rightarrow (X'\times_kX')^{(R')}$, by $E'^{(R')\circ}$ the unique connected normal subgroup scheme of $E'^{(R')}$, by $h'_0:E'^{(R')\circ}\rightarrow D'^{(R')}$ the restriction of the canonical projection $E'^{(R')}\rightarrow D'^{(R')}$ and by $G'^{(R')}$ the kernel of the homomorphism $h'_0:E'^{(R')\circ}\rightarrow D'^{(R')}$ (cf. \eqref{defER} and \eqref{exseqgs}). Pulling-back the injection \eqref{keyopenimm} by $E'^{(R')}\subset W'^{(R')}_0$, we obtain the following open immersions  \begin{align}\label{ED'toE'}
 E^{(R)}\times_{D^{(R)}} D'^{(R')}\rightarrow E'^{(R')}.
 \end{align}
 Confering \cite[Lemma 1.2, Proposition 1.5]{wr}, the injection \eqref{ED'toE'} is also a homomorphism of group schemes over $\wt D'$. Hence, it induces the following injective homomorphism 
  \begin{align}
E'^{(R')\circ}\rightarrow E^{(R)\circ}\times_{D^{(R)}} D'^{(R')}\label{E'0toE0} 
\end{align}
 and the following injection of kernels of $h_0$ and $h'_0$
 \begin{equation*}
 \iota:G^{(R')}\to G^{(R)}\times_{\wt D}\wt D'.
 \end{equation*}

\begin{proposition}[{cf. \cite[Proposition 2.22]{wr}}]\label{keyfunctorial}
Keep the notation and assumptions of \ref{X'toX} and \ref{V'}.
We have a commutative diagram of group schemes 
\begin{equation}\label{keycommdiag}
\xymatrix{\relax
G^{(R)\vee}\times_{\wt D}\wt D'\ar[rr]^-(0.5){\ch_R(V/U)}\ar[d]_{\iota^{\vee}}&& D^{(R)\vee}\times_{\wt D}\wt D'\ar[d]^{\lambda^{\vee}}\\
G^{(R')\vee}\ar[rr]_{\ch_{R'}(V'/U')}&&D'^{(R')\vee}}
\end{equation}
over a radicial cover $\wt D'_n$ of $\wt D'$ (\ref{asgs}).
\end{proposition}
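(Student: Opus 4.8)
The strategy is to realise the desired diagram as the image, under the correspondence of Proposition \ref{asgs}, of an equality of extension classes that is easy to produce geometrically. Recall that, by Definition \ref{geomcharform}, the characteristic form $\ch_R(V/U)$ is the homomorphism attached through the isomorphism \eqref{injlimgr} to the extension of group schemes over $\wt D$
\begin{equation*}
0\to G^{(R)}\to E^{(R)\circ}\to D^{(R)}\to 0,
\end{equation*}
and likewise $\ch_{R'}(V'/U')$ is attached to the analogous extension $0\to G'^{(R')}\to E'^{(R')\circ}\to D'^{(R')}\to 0$ over $\wt D'$. Since the isomorphism of Proposition \ref{asgs} is built from Lang's isogeny $\rL$ and the Frobenius base changes $S_n\to S$ of \ref{gsnot}, it is compatible with base change along $\wt D'\to\wt D$, it is contravariant in the vector-bundle variable so that pull-back of extensions along a bundle map corresponds to post-composition of homomorphisms with the dual bundle map, and it is covariant in the group-scheme variable so that push-out of extensions along a homomorphism corresponds to pre-composition with the dual homomorphism. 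Granting these functorialities, the proposition reduces to the single identity in $\Ext_{\wt D'}\big(D'^{(R')},G^{(R)}\times_{\wt D}\wt D'\big)$
\begin{equation*}
\lambda^{*}\big[E^{(R)\circ}\times_{\wt D}\wt D'\big]=\iota_{*}\big[E'^{(R')\circ}\big].
\end{equation*}

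To prove this identity I would analyse the group scheme $E^{(R)\circ}\times_{D^{(R)}}D'^{(R')}$ over $\wt D'$. Because $h_0\colon E^{(R)\circ}\to D^{(R)}$ is a surjective homomorphism with kernel $G^{(R)}$ (property (ii) of \ref{Ecirc}), this fibre product sits in a short exact sequence of group schemes over $\wt D'$ with kernel $G^{(R)}\times_{\wt D}\wt D'$ and quotient $D'^{(R')}$; by construction this sequence is the pull-back along $\lambda\colon D'^{(R')}\to D^{(R)}\times_{\wt D}\wt D'$ of \eqref{exseqgs} after base change to $\wt D'$, so its class is $\lambda^{*}[E^{(R)\circ}\times_{\wt D}\wt D']$. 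On the other hand, the open immersion of group schemes \eqref{E'0toE0}
\begin{equation*}
E'^{(R')\circ}\longrightarrow E^{(R)\circ}\times_{D^{(R)}}D'^{(R')}
\end{equation*}
lies over $D'^{(R')}$ and restricts on kernels to $\iota\colon G'^{(R')}\to G^{(R)}\times_{\wt D}\wt D'$; hence it is a morphism of short exact sequences which is the identity on the quotient $D'^{(R')}$ and is $\iota$ on the kernels. By the universal property of the push-out there is then a canonical map from the push-out of $0\to G'^{(R')}\to E'^{(R')\circ}\to D'^{(R')}\to 0$ along $\iota$ to $E^{(R)\circ}\times_{D^{(R)}}D'^{(R')}$, compatible with the identifications of kernels and quotients, and by the five lemma it is an isomorphism. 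Thus the class of $E^{(R)\circ}\times_{D^{(R)}}D'^{(R')}$ is also $\iota_{*}[E'^{(R')\circ}]$, and comparing the two computations yields the required identity of extension classes.

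Finally I would feed this identity into the functorialities listed above: $\lambda^{*}[E^{(R)\circ}\times_{\wt D}\wt D']$ corresponds, under the isomorphism of Proposition \ref{asgs}, to $\lambda^{\vee}\circ\ch_R(V/U)$ (with $\ch_R(V/U)$ base changed to $\wt D'$), while $\iota_{*}[E'^{(R')\circ}]$ corresponds to $\ch_{R'}(V'/U')\circ\iota^{\vee}$, and these two therefore agree as homomorphisms $G^{(R)\vee}\times_{\wt D}\wt D'\to D'^{(R')\vee}$; this is precisely the commutativity of \eqref{keycommdiag}. I expect the main obstacle to be bookkeeping rather than anything conceptual: one must arrange that the two characteristic forms and the dual maps $\iota^{\vee}$, $\lambda^{\vee}$ are all realised over a single radicial cover $\wt D'_n$ of $\wt D'$ — which is possible because $\wt D_n\times_{\wt D}\wt D'$ and $\wt D'_{n'}$ are both radicial covers of $\wt D'$ and hence dominated by a common one — and one must check carefully that \eqref{E'0toE0} is genuinely a morphism of extensions over $D'^{(R')}$, together with the stated naturality of the correspondence in Proposition \ref{asgs}; once these points are in place (for which one may appeal to \cite[Proposition 2.22]{wr} and its proof) the extension-class manipulations are formal.
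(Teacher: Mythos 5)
Your proposal is correct and follows essentially the same route as the paper: both identify $\lambda^{\vee}\circ\ch_R(V/U)$ with the pull-back of the extension $E^{(R)\circ}\times_{\wt D}\wt D'$ along $\lambda$, identify $\ch_{R'}(V'/U')\circ\iota^{\vee}$ with the push-out of the extension $E'^{(R')\circ}$ along $\iota$, and then use the injective homomorphism \eqref{E'0toE0} to recognise $E^{(R)\circ}\times_{D^{(R)}}D'^{(R')}$ as simultaneously that pull-back and that push-out. Your additional remarks (the five-lemma verification of the push-out identification and the bookkeeping of radicial covers) only make explicit what the paper leaves implicit.
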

\begin{proof}
The characteristic form $\ch_R(V/U):G^{(R)\vee}\times_{\wt D}\wt D'\to D^{(R)\vee}\times_{\wt D}\wt D'$ corresponds to the exact sequence of group schemes
\begin{equation}\label{bcbase}
0\to G^{(R)}\times_{\wt D}\wt D'\to E^{(R)\circ}\times_{\wt D}\wt D'\to D^{(R)}\times_{\wt D}\wt D'\to 0,
\end{equation}
Then, the composition of $\lambda^{\vee}\circ\ch_R(V/U)$ associates to the pull-back of \eqref{bcbase} by $\lambda$, i.e., the following exact sequence
\begin{equation}\label{bottomGR}
0\to G^{(R)}\times_{\wt D}\wt D'\to E^{(R)\circ}\times_{D^{(R)}}D'^{(R')}\to D'^{(R')}\to 0.
\end{equation}
The characteristic form $\ch_{R'}(V'/U'):G'^{(R')\vee}\rightarrow D'^{(R')\vee}$ corresponds to the exact sequence
\begin{equation}\label{exactG'}
0\to G^{(R')}\to E'^{(R')\circ}\to D'^{(R')}\to 0
\end{equation}
The composition of $\iota^{\vee}\circ\ch_{R'}(V'/U')$ associates to the push out of the exact sequence \eqref{exactG'} by $\iota:G^{(R')}\to G^{(R)}\times_{\wt D}\wt D'$. The injection \eqref{E'0toE0} induces the following commutative diagram of group schemes over $\wt D'$
\begin{equation}\label{E'0E0}
\xymatrix{\relax
0\ar[r]&G^{(R')}\ar@{}|-{\Box}[rd]\ar[r]\ar[d]_{\iota}&E'^{(R')\circ}\ar[r]\ar[d]&D'^{(R')}\ar@{=}[d]\ar[r]&0\\
0\ar[r]&G^{(R)}\times_{\wt D}\wt D'\ar[r]&E^{(R)\circ}\times_{D^{(R)}}D'^{(R')}\ar[r]&D'^{(R')}\ar[r]&0}
\end{equation}
where horizontal lines are exact sequences and the left square is Cartesian. Notice that $E^{(R)\circ}\times_{D^{(R)}}D'^{(R')}$ is the push-out of $E'^{(R')\circ}$ and $G^{(R)}\times_{\wt D}\wt D'$ over $G^{(R')}$. Hence, $\lambda^{\vee}\circ\ch_R(V/U)$ and $\iota^{\vee}\circ\ch_{R'}(V'/U')$ correspond to the same exact sequence  \eqref{bottomGR}. Hence we obtain the commutativity of the diagram \eqref{keycommdiag}.
 \end{proof}

\section{Functoriality of characteristic forms}

In this section, let $k$ be a perfect field of characteristic $p>0$ and $K$ a discrete valuation field which is a $k$-algebra with $\sO_K$ henselian. 

\begin{definition}\label{geomreal}

We say that $K$ is {\it geometric} if there is a triple $(X,D,\xi)$, where $X$ is a connected, separated and smooth $k$-scheme and $D$ an irreducible Cartier divisor of $X$ smooth over $\spec(k)$ with the generic point $\xi$, such that $\sO_{X,\xi}^{\rh}\iso\sO_K$. Such a triple $(X,D,\xi)$ is called a {\it geometric realization} of $K$.  In this case, the residue field $F=\kappa(\xi)$ of $\sO_K$  is of finite type over $k$ and $\sO_K$ is a finite $\sO_K^p$-algebra. The absolute differential module $\Omega^1_{\sO_K}=\Omega^1_{\sO_K/k}=\Omega^1_{\sO_K/\sO_K^p}$ is a finite generated $\sO_K$-module. 

We say that an extension $K'/K$ of geometric discrete valuation fields is {\it geometric} if there exist geometric realizations $(X,D, \xi)$ and $(X',D', \xi')$ of $K$ and $K'$, and a dominant $k$-morphism $g:X'\rightarrow X$ such that $g(\xi)=\xi'$, that $D'=(D\times_XX')_{\mathrm{red}}$ and that the diagram
\begin{equation*}
\xymatrix{\relax
\sO_{X,\xi}^{\rh}\ar[r]^-(0.5){\sim}\ar[d]_{g^*}&\sO_K\ar[d]\\
\sO_{X',\xi'}^{\rh}\ar[r]^-(0.5){\sim}&\sO_{K'}}
\end{equation*}
is commutative.
\end{definition}

\begin{proposition}[{\cite[Proposition 2.27, Proposition 2.28]{wr}}]\label{geomlocal}
Let $K$ be a geometric discrete valuation field, $L$ a finite Galois  $K$-algebra of group $G$
, $(X,D,\xi)$ a geometric realization of $K$ and $V$ an \'etale $G$-torsor of $U=X-D$ such that $\spec(L)=\spec(K)\times_XV$. 
\begin{itemize}
\item[(1)]
Then, for a rational number $r>1$, the ramification of $L/K$ is bounded by $r+$ (Definition \ref{rambound}) if and only if, the ramification of $V/U$ along $D$ is bounded by $R+$ $(R=rD)$ at $\xi\in D$ (Definition \ref{geomram}).
\item[(2)] Assume that the ramification of $L/K$ has conductor $r>1$, i.e., $G^r\neq \{1\}$ and $G^{r+}=\{1\}$.  After replacing $X$ by a Zariski open dense neighborhood of $\xi$, we may assume that the ramification of $V/U$ along $D$ is bounded by $R+$.
Then, we have
$G^{r+}=G^{(R)}_{\bar \xi}$, where $G^{(R)}$ denotes the \'etale group scheme over $\wt D$ constructed in \ref{Ecirc} and $\bar\xi$ a geometric point above $\xi\in D$ factors through $\wt D$. In particular, $G^r$ and $G^r_K/G^{r+}_K$ are finite and abelian $p$-torsion groups. 
\end{itemize}
\end{proposition}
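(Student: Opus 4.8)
The plan is to reduce both assertions to a single comparison result: after localizing and completing at $\xi$, the geometric machinery of \S6 (dilatations, inflations, and the integral closure $W^{(R,M)}$) reproduces Abbes and Saito's affinoid $X^a_Z$ together with its $W$-model and its distinguished ``diagonal'' point. Granting this, (1) becomes the translation of ``$h$ \'etale near $\varepsilon(\beta^{(M)-1}(\xi))$'' into a statement about geometric connected components of $X^a_Z$, and (2) is obtained by taking the fibre of the resulting identification at a geometric point over $\xi$.

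First I would set up the comparison. Since $\wh\sO_K$ has the same Galois group and ramification filtration as $\sO_K$ (the Remark after Definition \ref{rambound}), and a geometric realization survives after shrinking $X$, we may assume $\sO_K$ is complete; shrinking $X$ further we take $X=\spec(A)$ affine, $D=V(\pi)$ for $\pi$ a uniformizer of $\sO_K$, and $V\to U$ of the form $\spec(B)\to\spec(A[1/\pi])$. Choosing functions $z_1,\dots,z_n$ on $V$ whose germs generate $\sO_L$ over $\sO_K$, one of them a uniformizer of $\sO_L$, we get a presentation $\sO_K[x_1,\dots,x_n]/I_Z\iso\sO_L$ and a logarithmic system $(Z,P)$ of geometric origin. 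We fix an auxiliary multiplicity $M=mD$ with $m$ large and divisible, so that $mr$ and the other fractional exponents of \S6 become integral (this is the sole role of $M$, by the Remark after Definition \ref{geomram}), pass to the inflation $\wt X^{(M)}$, and let $E$ be the fraction field of the completed local ring of $\wt X^{(M)}$ at the generic point of $\wt D^{(M)}$, a complete discrete valuation field with $e(E/K)=m$.

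The heart of the argument, and the main obstacle, is to identify the germ at $\xi$ of the geometric objects with the arithmetic affinoid. Using Proposition \ref{otherdef} to describe $(X\times_kX)^{(R,M)}$ near $\delta^{(R,M)}$ in coordinates, the formal completion of $(X\times_kX)^{(R,M)}\times_{\wt X^{(M)}}\spec(\sO_E)$ along $D^{(R,M)}$, analytified, is the $E$-affinoid cut out in the polydisc by the conditions $\bv(f_j(\ul x))\ge rm$ on the coordinates $x_i=(z_i\otimes 1-1\otimes z_i)/\pi^{r}$; after the rescaling of valuations by $m$ these are the conditions defining $X^r_Z$, so this affinoid recovers $X^r_Z$ computed over $E$. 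Correspondingly $W^{(R,M)}$ pulls back to the canonical $W$-model over this affinoid, the section $\varepsilon$ goes to the image of the diagonal injection $\lambda\colon\Hom_K(L,K^{\alg})\hookrightarrow X^r_Z(K^{\alg})$ of \eqref{f0tofa}, and $\varepsilon(\wt D^{(M)})$ corresponds to $\lambda(\Hom_K(L,K^{\alg}))$. Thus ``$h$ is \'etale on a Zariski neighbourhood of $\varepsilon(\beta^{(M)-1}(\xi))$'' means the finite $W$-model is \'etale on a neighbourhood of $\lambda(\Hom_K(L,K^{\alg}))$; since the \'etale locus is Zariski open on the scheme $(X\times_kX)^{(R,M)}$ and stable under the $\bG_m$-action on $\wt X^{(M)}$, \'etaleness exactly over the fibre at $\xi$ is equivalent to \'etaleness over a whole neighbourhood of radii $b>r$, which by Abbes and Saito's description of geometric connected components (\cite[Lemma 3.1]{as i}, Theorem \ref{thmramfil}) says precisely that $\ol\lambda\colon\cF_K(L)\to\cF^b_K(L)$ is bijective for every rational $b>r$. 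This is (1); the logarithmic variant is identical with $Y^a_{Z,P}$ in place of $X^a_Z$.

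For (2), since the conductor of $L/K$ equals $r$, part (1) together with openness of the \'etale locus lets us shrink $X$ so that the ramification of $V/U$ along $D$ is bounded by $R+$, whence the constructions of \ref{Ecirc} apply: $h$ restricts to an \'etale homomorphism $E^{(R)}\to D^{(R)}$ of smooth group schemes over $\wt D$ with identity component $E^{(R)\circ}$ and kernel $G^{(R)}$, an \'etale commutative $p$-torsion group scheme. I would then take fibres at a geometric point $\bar\xi$ over $\xi$ factoring through $\wt D$: under the identification above, $E^{(R)}_{\bar\xi}$ is the set of geometric connected components of the $W$-model lying over the diagonal point, on which $\gal(L/K)$ acts with $\cF^r_K(L)=\gal(L/K)/\gal(L/K)^r$ (Theorem \ref{thmramfil}) as the set of $\gal(L/K)^r$-orbits; the identity component $E^{(R)\circ}_{\bar\xi}$ is the component through $\varepsilon(\bar\xi)$, and unwinding the group structure its kernel $G^{(R)}_{\bar\xi}$ over $D^{(R)}_{\bar\xi}$ is identified with $\gal(L/K)^r$, which coincides with $\gal(L/K)^r/\gal(L/K)^{r+}=G^r$ since the conductor is exactly $r$. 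Finiteness, commutativity and $p$-torsion of $G^r$ then follow from property (ii) of $G^{(R)}$ in \ref{Ecirc}, and the same for $G^r_K/G^{r+}_K$ by passing to the limit over all finite quotients of this type. The entire argument hinges on the canonical identification of the third paragraph — that the scheme-theoretic dilatation and inflation constructions, completed and analytified at $\xi$, reproduce verbatim Abbes and Saito's affinoid $X^r_Z$ and its $W$-model — which is precisely the content of \cite[\S2]{wr} and is where essentially all the work lies.
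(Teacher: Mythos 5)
The first thing to say is that the paper contains no proof of this proposition: it is imported verbatim from Saito's work, and the bracketed citation \cite[Propositions 2.27 and 2.28]{wr} in the statement \emph{is} the proof. So there is no internal argument to measure yours against. Judged on its own terms, your sketch does identify the strategy Saito actually follows --- complete at $\xi$, identify the Abbes--Saito affinoid $X^r_Z$ with the tube of the dilatation $(X\times_kX)^{(R,M)}$ along $D^{(R,M)}$ after base change to the field $E$ at the generic point of $\wt D^{(M)}$, and read off both the boundedness criterion and the fibre $G^{(R)}_{\bar\xi}$ from that identification. The difficulty is that the entire mathematical content sits in your third paragraph, which you assert rather than prove and then explicitly defer to ``\cite[\S 2]{wr}'' --- that is, to the very result being quoted. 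As a self-contained argument the proposal is therefore circular: it is a correct road map, not a proof.

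Concretely, three steps are named but not carried out. (i) The comparison takes place over $E$, which has ramification index $m$ over $K$ and residue field $F(s)$, a transcendental extension of $F$; one must show that the geometric connected components of the affinoid are unchanged under this base change (the rescaling $\bv_E=m\bv_K$ matches the defining inequalities, but $\pi_0^{\mathrm{geom}}$ over $E^{\alg}$ versus $K^{\alg}$ is exactly where work is needed). (ii) The equivalence between ``$h$ \'etale on a Zariski neighbourhood of $\varepsilon(\beta^{(M)-1}(\xi))$'' and ``$\ol\lambda:\cF_K(L)\to\cF^b_K(L)$ bijective for every rational $b>r$'' requires showing that the $\bG_m$-direction of the inflation genuinely parametrizes the radius, so that a Zariski neighbourhood of the fibre over $\xi$ captures an interval of radii $(r,r+\epsilon)$ and conversely; you gesture at this via ``$\bG_m$-stability'' without establishing it. (iii) In part (2), identifying $G^{(R)}_{\bar\xi}$ with $\gal(L/K)^r$ needs the groupoid structure on $W^{(R)}_0$ over $(X\times_kX)^{(R)}$ and the matching of the connected component of the diagonal section with the identity component of a group scheme; ``unwinding the group structure'' is precisely the content of \cite[Proposition 2.16]{wr} and of Proposition 2.28 itself. (Incidentally, the printed equality $G^{r+}=G^{(R)}_{\bar\xi}$ must be a typo for $G^{r}=G^{(R)}_{\bar\xi}$, since $G^{r+}=\{1\}$ is hypothesized; you silently, and correctly, read it the second way.) In short: right skeleton, but the box being cited is never opened.
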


\begin{definition}
We keep the assumption of Proposition \ref{geomlocal} (2).
The stalk of the injective homomorphism
$\ch_R(V/U):G^{(R)\vee}\rightarrow D^{(R)\vee}=(\bT^*X\times_X\wt D)(\wt R)$ (\ref{geomcharform})
 at a geometric point $\bar\xi$ above $\xi\in D$ gives the following injective homomorphism, called the {\it characteristic form of $L/K$}
\begin{equation}\label{finicharform}
\ch(L/K):\Hom_{\bF_p}(G^r,\bF_p)\rightarrow \Omega^1_{\sO_K}\otimes_{\sO_K}\overline N_K^{-r},
\end{equation}
where $\overline N^{-r}$ denotes the $1$-dimensional $\overline F$-vector space
\begin{equation*}
\overline N_K^{-r}=\{x\in K^{\sep}\,;\, \bv_K(x)\geq -r\}/\{x\in K^{\sep}\,;\, \bv_K(x)> -r\}.
\end{equation*}
 Pass \eqref{finicharform} to the absolute Galois group of $K$. Then, for any rational number $r>1$, we have the following injective homomorphism, called the {\it characteristic form}
\begin{equation*}
\ch:\Hom_{\bF_p}(G^r_K/G^{r+}_K,\bF_p)\rightarrow \Omega^1_{\sO_K}\otimes_{\sO_K}\overline N_K^{-r}.
\end{equation*}
\end{definition}

\begin{theorem}\label{themgr}
Let $K'/K$ be a geometric extension of geometric discrete valuation fields, $e$ the ramification index of $K'/K$ and $\gamma:G_{K'}\rightarrow G_K$ the induced morphism of absolute Galois group. Then, for any  $r\in \mathbb Q_{>1}$,
we have $\gamma(G_{K'}^{er})\subset G^r_K$ and the following commutative diagram
\begin{equation}\label{funcharform}
\xymatrix{\relax
\Hom_{\bF_p}(G^r_K/G^{r+}_K,\bF_p)\ar[rr]^-(0.5){\ch}\ar[d]_{\gamma^{\vee}}&&\Omega^1_{\sO_K}\otimes_{\sO_K}\overline N_K^{-r}\ar[d]^{\theta}\\
\Hom_{\bF_p}(G^{er}_{K'}/G^{er+}_{K'},\bF_p)\ar[rr]^-(0.5){\ch}&&\Omega^1_{\sO_{K'}}\otimes_{\sO_{K'}}\overline N_{K'}^{-er}}
\end{equation}
where $\theta$ denotes the canonical map of differentials induced by the inclusion $\sO_K\subset \sO_{K'}$.
\end{theorem}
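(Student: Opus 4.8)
The plan is to reduce the statement to the geometric functoriality of characteristic forms, i.e., Proposition \ref{keyfunctorial}, by choosing compatible geometric realizations. First I would fix a geometric extension $K'/K$ with geometric realizations $(X,D,\xi)$ and $(X',D',\xi')$ and a dominant $k$-morphism $g:X'\to X$ as in Definition \ref{geomreal}, so that $eD'=g^*D$ near $\xi'$ (after shrinking $X$, $X'$ to Zariski neighbourhoods of the generic points we may assume $D'=(D\times_XX')_{\mathrm{red}}$ and the pullback of $D$ is $eD'$). The inclusion $\gamma(G_{K'}^{er})\subset G_K^r$ is already available: it follows from Theorem \ref{pureinsepramintro}(i) when $K'/K$ is purely inseparable, but in the generality of a geometric extension it should be recorded as a consequence of Proposition \ref{geomlocal}(1) together with the geometric ramification bound argument at the end of \S\ref{V'} (the ramification of $V'/U'$ is bounded by $R'+$ whenever that of $V/U$ is bounded by $R+$, and $R'=g^*R$ has multiplicity $er$ along $D'$). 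So $\gamma^\vee$ in \eqref{funcharform} is well-defined.

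Next I would unwind the two sides of \eqref{funcharform}. Fix $r\in\bQ_{>1}$ with $G_K^r/G_K^{r+}\neq\{1\}$ (otherwise there is nothing to prove), let $L/K$ be the fixed field data and $V/U$ the corresponding torsor, and let $V'=U'\times_UV$, which realizes $L'=K'\otimes_KL$. After shrinking we may assume both ramifications are non-degenerate (using the remark after Definition \ref{nondeg}). Now Proposition \ref{geomlocal}(2) identifies $G_K^{r+}$ with the stalk $G^{(R)}_{\bar\xi}$ and $G_{K'}^{er+}$ with $G'^{(R')}_{\bar\xi'}$, where $R=rD$, $R'=erD'=g^*R$; and the characteristic forms $\ch$ in \eqref{funcharform} are the stalks at $\bar\xi,\bar\xi'$ of $\ch_R(V/U)$ and $\ch_{R'}(V'/U')$. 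The dual vector bundles $D^{(R)\vee}=(\bT^*X\times_X\wt D)(\wt R)$ and $D'^{(R')\vee}=(\bT^*X'\times_{X'}\wt D')(\wt R')$ have stalks $\Omega^1_{\sO_K}\otimes\ol N_K^{-r}$ and $\Omega^1_{\sO_{K'}}\otimes\ol N_{K'}^{-er}$ respectively, via \eqref{DRMiso} and the identification of $\wt R^{(M)}$ with the appropriate twisting sheaf; and the map $\lambda^\vee:D^{(R)\vee}\times_{\wt D}\wt D'\to D'^{(R')\vee}$ of Proposition \ref{keyfunctorial}, which at the level of differentials is induced by $\partial f$, has stalk exactly $\theta$, the canonical map $\Omega^1_{\sO_K}\to\Omega^1_{\sO_{K'}}$ tensored with the norm spaces. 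Likewise $\iota^\vee$ has stalk $\gamma^\vee$. Then the commutative square \eqref{keycommdiag} of Proposition \ref{keyfunctorial}, taken stalk-wise at $\bar\xi'$ lying over $\bar\xi$ through the radicial cover $\wt D'_n$, is precisely the diagram \eqref{funcharform}.

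The main obstacle I anticipate is the bookkeeping of the multiplicities and twists: one must verify that the divisor $R'$ attached to $V'/U'$ really is $g^*R = erD'$ (not merely $rD'$), that the multiplicity divisor $M$ can be chosen simultaneously for $X$ and $X'$ as in \S\ref{X'toX} with $m'=m/e$ an integer, and that under these choices the isomorphism \eqref{DRMiso} and Proposition \ref{keyfunctorial}'s map $\lambda^\vee$ specialize at the generic point to the naive map $\theta$ on $\Omega^1$ tensored with $\ol N^{-r}\to\ol N^{-er}$ — this last point requires matching the normalization $\bv_{K'}=e\bv_K$ of \S\ref{eri} against the twisting by $\wt R$ versus $\wt R'$. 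A secondary point is to confirm the compatibility of the geometric characteristic form of \S\ref{geomcharform} with the field-theoretic one of \eqref{finicharform} is functorial in exactly the way Proposition \ref{geomlocal} asserts, so that passing from \eqref{keycommdiag} to \eqref{funcharform} is a clean specialization rather than requiring a fresh argument. Once the identifications of the four corners and the two vertical arrows are pinned down, the theorem is immediate from Proposition \ref{keyfunctorial}.
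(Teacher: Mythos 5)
Your proposal is correct and follows essentially the same route as the paper: the inclusion $\gamma(G_{K'}^{er})\subset G_K^r$ is obtained from the transfer of geometric ramification bounds in \ref{V'} combined with Proposition \ref{geomlocal}(1) (the paper adds only a small $\epsilon$-shift to pass from ``bounded by $s+$'' statements to the stated inclusion), and the commutativity of \eqref{funcharform} is obtained by reducing to a finite Galois quotient with $G^{r+}=\{1\}$, choosing compatible geometric realizations with $R=rD$, $R'=erD'$ and non-degenerate ramification, and taking the stalk of the diagram \eqref{keycommdiag} of Proposition \ref{keyfunctorial} at a geometric point above $\xi'$. The bookkeeping issues you flag (multiplicities, the identification of $\lambda^{\vee}$ with $\theta$, and of $\iota^{\vee}$ with $\gamma^{\vee}$ via Proposition \ref{geomlocal}(2)) are exactly the points the paper's proof relies on.
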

\begin{proof}
Let $L$ be a finite Galois extension of $K$ of group $G$, $L'$ a Galois extension of $K'$ which is a component of $L\otimes_KK'$ and $H=\gal(L'/K')$.

To verify $\gamma(G_{K'}^{er})\subset G^r_K$ for any $r\in \bQ_{>1}$, it sufficient to show that $H^{er}=\{1\}$ if $G^r=\{1\}$. Let $(X,D,\xi)$ and $(X',D',\xi')$ be geometric realizations of $K$ and $K'$ and $g:X'\rightarrow X$ the map as in Definition \ref{geomreal}. After replacing $X$ by a Zariski neighborhood of $\xi$, we can find an \'etale $G$ torsor $V$ of $U=X-D$ such that $\spec(L)=V\times_U\spec(K)$. We put $V'=V\times_XX'$.  By the construction in \ref{V'}, we see that the ramification of $V'/U'$ along $D'$ is bounded by $esD+$ for an $s\in \bQ_{>1}$ if the ramification of $V/U$ along $D$ is bounded $sD+$. Hence, for any $s\in \bQ_{>1}$, we have $H^{es+}=\{1\}$ if $G^{s+}=\{1\}$ (Proposition \ref{geomlocal}). Then, we have
\begin{align*}
G^r=\{1\}&\Rightarrow G^{(r-\epsilon)+}=\{1\},\ \ \ \textrm{for some}\ \  \epsilon>0\,;\\
&\Rightarrow H^{(er-e\epsilon)+}=\{1\}\Rightarrow H^{er}=\{1\}.
\end{align*}

To verify the commutativity of the diagram \eqref{funcharform}, we only need to check it for a finite quotient of $G_K$.  
We replace $G_K$ by $G$ and $G_{K'}$ by $H$. We further replace $L$ by the sub-field $L^{G^{r+}}$. We have $G^{r+}=\{1\}$ and hence $H^{er+}=\{1\}$.  We are reduced to show the commutativity of the diagram
\begin{equation}\label{finitefuncharform}
\xymatrix{\relax
\Hom_{\bF_p}(G^r,\bF_p)\ar[rr]^-(0.5){\ch(L/K)}\ar[d]_{\gamma^{\vee}}&&\Omega^1_{\sO_K}\otimes_{\sO_K}\overline N_K^{-r}\ar[d]^{\theta}\\
\Hom_{\bF_p}(H^{er},\bF_p)\ar[rr]^-(0.5){\ch(L'/K')}&&\Omega^1_{\sO_{K'}}\otimes_{\sO_{K'}}\overline N_{K'}^{-er}}
\end{equation}
We take the geometric realizations of $K$, $K'$ and $L$ as above and we put $R=rD$ and ($R'=erD'$). Notice that $g:X'\rightarrow X$ is dominant. We may shrinking $X$ (resp. $X'$) by a Zariski neighborhood of $\xi$ (resp. $\xi'$) such that the ramification of $V/U$ along $D$ is bounded by $R+$ and non-degenerate (resp. the ramification of $V'/U'$ along $D'$ is bounded by $R'+$ and non-degenerate). Taking the notation in \ref{X'toX} and \ref{V'},  we have the commutative diagram \eqref{keycommdiag} of group schemes over a radicial cover $\wt D'_n$ of $\wt D'$. Let $\bar \xi'$ be a geometric point above $\xi'\in D'$ that factors through $\wt D'_n$.  Taking the stalk of the diagram \eqref{keycommdiag} at $\bar \xi'$, we obtain \eqref{finitefuncharform}.
\end{proof}

\begin{proposition}\label{coropurins}
Let $K'/K$ be a geometric extension of geometric discrete valuation field. We assume that $K'/K$ is purely inseparable, untwisted and has finite exponent (\ref{erd}). We denote by $e$ the ramification index of $K'/K$, by $f$ the dual ramification index of $K'/K$ (\ref{eri}) and by $\gamma:G_{K'}\rightarrow G_K$ the induced isomorphism of absolute Galois groups. Notice $K'/K$ has exponent $n$ for $ef=p^n$.
Then, for any $r\in \bQ_{>1}$, we have the following commutative diagrams
\begin{equation*}
\xymatrix{\relax
\Hom_{\bF_p}(G^r_K/G^{r+}_K,\bF_p)\ar[r]^-(0.5){\ch}\ar[d]_{\gamma^{\vee}}&\Omega^1_{\sO_K}\otimes_{\sO_K}\overline N_K^{-r}\ar[d]^{\theta}\\
\Hom_{\bF_p}(G^{er}_{K'}/G^{er+}_{K'},\bF_p)\ar[r]^-(0.5){\ch}&\Omega^1_{\sO_{K'}}\otimes_{\sO_{K'}}\overline N_{K'}^{-er}}
\end{equation*}
\begin{equation*}
\xymatrix{\relax
\Hom_{\bF_p}(G^r_{K'}/G^{r+}_{K'},\bF_p)\ar[r]^-(0.5){\ch}\ar[d]_{(\gamma^{-1})^{\vee}}&\Omega^1_{\sO_{K'}}\otimes_{\sO_{K'}}\overline N_{K'}^{-r}\ar[d]^{\sigma}\\
\Hom_{\bF_p}(G^{fr}_{K}/G^{fr+}_{K},\bF_p)\ar[r]^-(0.5){\ch}&\Omega^1_{\sO_{K}}\otimes_{\sO_{K}}\overline N_{K}^{-fr}}
\end{equation*}
where $\sigma$ denotes the canonical map induced by the inclusion $\iota:K'\rightarrow K$, $x'\mapsto x^{p^n}$.
\end{proposition}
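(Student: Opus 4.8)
The plan is to read the first diagram off Theorem \ref{themgr} and to obtain the second by passing through the tower $K\subseteq K'\subseteq K^{p^{-n}}$, where $n$ is the exponent of $K'/K$. The first diagram is precisely Theorem \ref{themgr} for the geometric extension $K'/K$ (which has ramification index $e$); note that this theorem already provides $\gamma(G^{er}_{K'})\subseteq G^r_K$, so the vertical map $\gamma^{\vee}$ of graded pieces is defined (cf.~\eqref{gradpiece}). For the second diagram, observe that $K'\subseteq K^{p^{-n}}$, that $K^{p^{-n}}/K'$ has ramification index $f$ and $K^{p^{-n}}/K$ has ramification index $p^{n}$, so $ef=p^{n}$ as in the statement, and that $K^{p^{-n}}/K'$ is again a geometric extension.

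First I would apply Theorem \ref{themgr} to the geometric extension $K^{p^{-n}}/K'$, of ramification index $f$: for $r\in\bQ_{>1}$ this yields a commutative square whose rows are the characteristic forms of level $r$ for $K'$ and of level $fr$ for $K^{p^{-n}}$, whose left vertical map is $\rho^{\vee}$ with $\rho:G_{K^{p^{-n}}}\to G_{K'}$ induced by $K'\subseteq K^{p^{-n}}$, and whose right vertical map is the canonical map $\Omega^1_{\sO_{K'}}\otimes\overline N_{K'}^{-r}\to\Omega^1_{\sO_{K^{p^{-n}}}}\otimes\overline N_{K^{p^{-n}}}^{-fr}$ induced by $\sO_{K'}\subseteq\sO_{K^{p^{-n}}}$. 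Then I would transport this square along the isomorphism $\delta_n:K^{p^{-n}}\iso K$, $x\mapsto x^{p^{n}}$. By Lemma \ref{frob fileq}, $\delta_n$ induces an isomorphism $\psi_n:G_K\iso G_{K^{p^{-n}}}$ with $\gamma_n\psi_n=\mathrm{id}$, where $\gamma_n:G_{K^{p^{-n}}}\to G_K$ is induced by $K\subseteq K^{p^{-n}}$, and $\psi_n$ carries $G^a_K$ onto $G^a_{K^{p^{-n}}}$ for every $a$; it also induces an isomorphism $\Omega^1_{\sO_{K^{p^{-n}}}}\otimes\overline N_{K^{p^{-n}}}^{-fr}\iso\Omega^1_{\sO_K}\otimes\overline N_K^{-fr}$, acting on $\Omega^1$ by $dx\mapsto d(x^{p^{n}})$ and on $\overline N^{-fr}$ by the $p^{n}$-th power map. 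Since the formation of the characteristic form is functorial for isomorphisms of geometric discrete valuation fields---here an isomorphism inducing the $p^{n}$-th power automorphism of the perfect base $k$---the bottom row of the square goes, under $\psi_n^{\vee}$ on the source and the $\delta_n$-induced isomorphism on the target, to the characteristic form of level $fr$ for $K$. Stacking this identification below the square of Theorem \ref{themgr} produces a commutative diagram linking $\ch$ for $K'$ at level $r$ with $\ch$ for $K$ at level $fr$.

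It remains to recognize the composite vertical maps. On the left, $\psi_n^{\vee}\circ\rho^{\vee}=(\rho\psi_n)^{\vee}$, and $\rho\psi_n=\gamma^{-1}$: since the inclusion $K\subseteq K^{p^{-n}}$ factors through $K'$ we have $\gamma\rho=\gamma_n$, hence $\gamma\,(\rho\psi_n)=\gamma_n\psi_n=\mathrm{id}$ by Lemma \ref{frob fileq}; on graded pieces $\rho\psi_n$ is the map $\gamma^{-1}:G^{fr}_K/G^{fr+}_K\to G^r_{K'}/G^{r+}_{K'}$ of \eqref{gradpiece} (in particular $G^{fr}_K\subseteq\gamma(G^r_{K'})$, so the diagram makes sense), and the left composite is $(\gamma^{-1})^{\vee}$. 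On the right, composing the canonical map $\Omega^1_{\sO_{K'}}\to\Omega^1_{\sO_{K^{p^{-n}}}}$ with the $\delta_n$-induced isomorphism sends $dx'$ (for $x'\in\sO_{K'}$) to $d(x'^{p^{n}})$ computed in $\Omega^1_{\sO_K}$, and on coefficients it is the $p^{n}$-th power map $\overline N_{K'}^{-r}\to\overline N_K^{-fr}$; since $\iota:K'\to K$, $x'\mapsto x'^{p^{n}}$, factors as $K'\hookrightarrow K^{p^{-n}}$ followed by $\delta_n$, this composite is precisely $\sigma$, and the second diagram follows. The step I expect to cost real work is the functoriality of the characteristic form under $\delta_n$: this is not an instance of Theorem \ref{themgr}, since $\delta_n$ is not the inclusion of a subfield but the $p^{n}$-th power map, semilinear over $k$; one has to verify that the dilatations, the group scheme $E^{(R)\circ}$, and its extension class $\ch_R(V/U)$ of \S6 are unaffected when $k$ is replaced by an isomorphic perfect field and the geometric realization is twisted correspondingly---which holds because $k$ is perfect, by the same mechanism that underlies Lemma \ref{frob fileq} and the remark following it---or, alternatively, one may appeal to the independence of the characteristic form of the chosen geometric realization.
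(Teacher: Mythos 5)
Your proposal is correct and rests on the same underlying mechanism as the paper---the Frobenius twist $X_n=X\times_{k,\F_k^{-n}}k$---but packages it differently. For the second square the paper shows that the map $\iota\colon K'\to K$, $x'\mapsto x'^{p^n}$, is itself a \emph{geometric extension} in the sense of Definition \ref{geomreal}: choosing realizations with $g\colon X'\to X$ and factoring $\F^n_X=g\circ h$ after shrinking $X$, the composite $h\circ\pr_1\colon X_n\to X'$ is a dominant $k$-morphism realizing $\iota$; a single application of Theorem \ref{themgr} to $\iota$ (whose ramification index is $f$) then yields the second diagram verbatim, the left vertical map being $(\gamma^{-1})^{\vee}$ because $\iota$ composed with the inclusion $K\subseteq K'$ is the $n$-th power of Frobenius (Lemma \ref{fr id}). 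You instead factor $\iota$ through the honest inclusion $K'\subseteq K^{p^{-n}}$ followed by the isomorphism $\delta_n$, apply Theorem \ref{themgr} to that inclusion, and transport along $\delta_n$; your identifications of the two composite vertical maps ($\rho\psi_n=\gamma^{-1}$ and the computation of $\sigma$) are correct. The trade-off is that your route shifts the burden onto two auxiliary facts that you flag but do not fully discharge: that $K^{p^{-n}}/K'$ is geometric (which requires exactly the same factorization $\F^n_X=g\circ h$ and twist by $\F_k^{-n}$ as the paper's argument, so no work is actually saved there), and that the characteristic form is invariant under the $k$-semilinear isomorphism $\delta_n$. The paper's formulation avoids the second point entirely by absorbing the twist into the geometric realization, so that only the already-proved Theorem \ref{themgr} is invoked. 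If you keep your route, the invariance under $\delta_n$ does hold for the reason you indicate---the constructions of \S6 are natural in the triple $(X,D,\xi)$ and commute with base change along automorphisms of the perfect field $k$, and the target $\Omega^1_{\sO_K}=\Omega^1_{\sO_K/\sO_K^p}$ is intrinsic to the ring $\sO_K$---but it should be stated and proved as a separate lemma rather than asserted.
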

\begin{proof}
Applying Theorem \ref{themgr} to the extension $K'/K$, we obtain the commutativity of the first diagram. To verify the commutativity of the second diagram by Theorem \ref{themgr}, it is sufficient to show that the injection $\iota:K'\rightarrow K$ is geometric. Let 
$(X,D,\xi)$ be a geometric realization of $K$ and $\F_X^n:X\rightarrow X$ the $n$-th power of the absolute Frobenius of $X$. Since the purely inseparable extension $K'/K$ has exponent $n$, we may find a realization $(X',D',\xi')$ of $K'$ and a $k$-morphism $g:X'\rightarrow X$ such that $F_X^n:X\rightarrow X$ is the composition of $h:X\rightarrow X'$ and $g:X'\rightarrow X$  after shrinking $X$. Let $\F_k^{-n}$ be the inverse of the $n$-th power of the Frobenius of $k$ and $X_n=X\times_{k,\F_k^{-n}}k$. The first projection $\pr_1:X_n\rightarrow X$ is an isomorphism of schemes and the composition $h\circ\pr_1:X_n\rightarrow X'$ is a $k$-morphism. Hence, $\iota:K'\rightarrow K$ is geometric and realized by  $h\circ\pr_1:X_n\rightarrow X'$. 
\end{proof}

\begin{conjecture}
Keep the assumptions of Corollary \ref{coropurins}. Then, two commutative diagrams in {\it loc. cit.} are co-Cartesian. In particular, assuming that $K'/K$ has exponent $1$, for any $r\in \bQ_{>1}$, the sequence 
\begin{equation*}
G_K^{pr}/G^{pr+}_K\xrightarrow{\gamma^{-1}}G_{K'}^{er}/G^{er+}_{K'}\xrightarrow{\gamma} G_K^{r}/G^{r+}_K
\end{equation*}
is exact.
\end{conjecture}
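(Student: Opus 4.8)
\emph{Sketch of a possible approach.}

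The plan is to descend the assertion to the geometric picture of \S6 and reduce it to a co-Cartesian property of the diagram~\eqref{keycommdiag} of group schemes. Fix a geometric realization $g\colon X'\to X$ of $K'/K$, and for the second diagram the realization $h\circ\pr_1\colon X_n\to X'$ of the map $K'\to K$, $x'\mapsto x'^{p^{n}}$, produced in the proof of Proposition~\ref{coropurins}. After shrinking $X$ and $X'$ around $\xi$ and $\xi'$, every finite Galois $L/K$ of conductor $r$ is cut out by an \'etale torsor $V/U$ whose ramification along $D$ is bounded by $R+$ ($R=rD$) and non-degenerate, and — taking the stalk at a geometric point $\bar\xi'$ above $\xi'$ factoring through the radicial cover $\wt D'_n$, then passing to the limit over such $L$ — the two diagrams of the conjecture are recovered from the diagram~\eqref{keycommdiag} attached to $V$. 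Since the stalk functor is exact and the transition in $L$ is a filtered colimit, it suffices to show that~\eqref{keycommdiag} is co-Cartesian over $\wt D'_n$ for every such $V$.

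For this I would translate~\eqref{keycommdiag}, via Proposition~\ref{asgs}, back into the language of extensions of group schemes, using the two facts recorded in the proof of Proposition~\ref{keyfunctorial}: in the diagram~\eqref{E'0E0} the left-hand square is Cartesian, and $E^{(R)\circ}\times_{D^{(R)}}D'^{(R')}$ is the push-out of $E'^{(R')\circ}$ and $G^{(R)}\times_{\wt D}\wt D'$ along $G^{(R')}$. Together these say that the failure of~\eqref{keycommdiag} to be co-Cartesian is measured by the discrepancy between $E'^{(R')\circ}$ and its pull-back $E^{(R)\circ}\times_{D^{(R)}}D'^{(R')}$, and the claim becomes that this discrepancy is exactly the one forced by $\lambda\colon D'^{(R')}\to D^{(R)}\times_{\wt D}\wt D'$, the twist by $\partial g$ of~\ref{X'toX}. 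The behaviour of $\lambda$ along $D$ is itself controlled by the conormal sequence $g^{*}\Omega^1_X\xrightarrow{dg}\Omega^1_{X'}\to\Omega^1_{X'/X}\to0$ and the identification~\eqref{DRMiso}: the ``untwisted'' hypothesis guarantees that $\Omega^1_{X'/X}|_{\wt D'}$ has the expected rank, and the indices $e$ and $f$ (with $ef=p^{n}$) record how $\partial g$ rescales the radius-$R$ directions; so computing $\ker\lambda$ and $\operatorname{coker}\lambda$ in terms of $e$, $f$ is the routine part.

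The essential difficulty is the reverse inclusion: that the open immersion $E^{(R)}\times_{D^{(R)}}D'^{(R')}\hookrightarrow E'^{(R')}$ of~\S\ref{V'} enlarges the connected component by \emph{no more than} $\operatorname{coker}\lambda$. This is where the purely inseparable ramification genuinely intervenes — Examples~\ref{Kn} and~\ref{exsamec} show that $V'/U'$ picks up strictly more ramification than $V/U$, and the inclusions of~\S\ref{V'} do not by themselves locate the extra connected components of $E'^{(R')}$. My plan for this step is a dévissage: filter $G$ by its ramification subgroups to reduce to $G=\bZ/p\bZ$, hence to an Artin--Schreier cover $t^{p}-t=a$ with $\bv_K(a)=-r$; choose coordinates on $\sO_{K'}$ adapted to a $p$-basis of $F$ containing the new $p^{n}$-th roots; make $W^{(R)}$, $W'^{(R')}$ and their \'etale loci explicit in these coordinates, compute $E'^{(R')\circ}$ directly, and check that its new part is exactly $\operatorname{coker}\lambda$; then propagate the result back up the filtration using the functoriality of~\eqref{keycommdiag}. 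Taking Cartier duals and stalks would then give the co-Cartesian diagrams and, in the exponent-$1$ case, the stated exact sequence. I expect the main obstacle to be precisely this last point — upgrading the explicit rank-one computation to something functorial in $V$ — which is essentially the same difficulty that obstructs the analogous statement for the refined Swan conductor mentioned after Proposition~\ref{coropurinsintro}.
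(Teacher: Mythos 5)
The statement you are addressing is stated in the paper as a \emph{conjecture}: the paper supplies no proof of it, and indeed the author signals (in the remark after Proposition \ref{coropurinsintro}) that even the analogous statement for refined Swan conductors is only expected. So there is no argument of the paper's to compare yours against; the only question is whether your sketch closes the problem. It does not. Your reduction of the co-Cartesian property to a statement about the diagram \eqref{E'0E0} --- namely that the inclusion \eqref{E'0toE0} of $E'^{(R')\circ}$ into $E^{(R)\circ}\times_{D^{(R)}}D'^{(R')}$ accounts for \emph{all} of the discrepancy, so that nothing beyond $\operatorname{coker}\lambda$ and $\operatorname{coker}\iota^{\vee}$ intervenes --- is a correct and useful reformulation, and the ``easy'' halves (commutativity, the composite being zero in the exponent-$1$ sequence) do follow from Proposition \ref{coropurins} and Theorems \ref{thleft} and \ref{thright}. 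But the essential step, which you yourself flag as the main obstacle, is left entirely as a plan: no rank-one computation is actually carried out, and the claim that the ``new part'' of $E'^{(R')\circ}$ is exactly $\operatorname{coker}\lambda$ is precisely the content of the conjecture, restated in geometric language rather than proved.

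Two further points in the plan would need justification even if the rank-one case were done. First, the proposed d\'evissage filters $G$ by its ramification subgroups to reduce to $G=\bZ/p\bZ$; but the conjecture concerns the graded piece $G^r_K/G^{r+}_K$ of the full absolute Galois group at a \emph{fixed} $r$, and the characteristic form of a quotient or subextension does not in general determine that of $L/K$ at the same slope --- the co-Cartesian property is an assertion about cokernels, and cokernels are not exact functors of a filtration, so ``propagating the result back up the filtration using functoriality'' is not automatic. Second, passing from finite quotients to $G_K$ requires exactness of a filtered colimit of three-term sequences, which needs the finite-level statements to be compatible with the transition maps $\gamma^{\vee}$ and $(\gamma^{-1})^{\vee}$; this is plausible but is an additional verification, not a formality. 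In short: your proposal is a reasonable research programme and correctly isolates where the difficulty lives, but it is not a proof, and the statement remains open in the paper as well.
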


\subsection*{Acknowledgement}
The author is grateful to T. Saito for many useful comments and  thank Y. Cao and J.-P. Teyssier for discussions. The author is a postdoctoral fellow at the Max-Planck Institute of Mathematics in Bonn and he would like to thank the institute for the hospitality.

\end{document}